%
\documentclass[12pt]{article} 

\usepackage[margin=2.5cm]{geometry}
\geometry{a4paper}
\usepackage[english]{babel}
\usepackage{hyperref}
\usepackage{amsmath}
\usepackage{amsthm}
\usepackage{amssymb}
\usepackage{ucs}
\usepackage{enumerate}
\usepackage[utf8]{inputenc}
\usepackage[T1]{fontenc}


\DeclareMathOperator{\rng}{\mathrm{rng}}

\newcommand{\MAlg}{\mathrm{MAlg}}
\newcommand{\Aut}{\mathrm{Aut}}

  \newcommand{\R}{\mathbb R}
  \newcommand{\N}{\mathbb N}
  
  \newcommand{\Z}{\mathbb Z}
  
  \newcommand{\LL}{\mathrm L}

  \newcommand{\id}{\mathrm{id}}

 \newcommand{\dom}{\mathrm{dom}\;}

  \newcommand{\inv}{^{-1}}

  \renewcommand{\leq}{\leqslant}
  \renewcommand{\geq}{\geqslant}
  \newcommand{\abs}[1]{\left\lvert #1\right\rvert}

  \newcommand{\act}{\curvearrowright}

  \newcommand{\la}{\left\langle}
  \newcommand{\ra}{\right\rangle}
  \newcommand{\into}{\hookrightarrow}

  \DeclareMathOperator{\Sub}{\mathrm{Sub}}
  \DeclareMathOperator{\Subfin}{\mathrm{Sub}_{\mathrm{fin}}}
    \DeclareMathOperator{\Subhyp}{\mathrm{Sub}_{\mathrm{hyp}}}

\newtheorem{theoremi}{Theorem}				

\newtheorem{theorem}{Theorem}
\numberwithin{theorem}{section}

\newtheorem{corollary}[theorem]{Corollary}
\newtheorem{lemma}[theorem]{Lemma}

\newtheorem{proposition}[theorem]{Proposition}
\newtheorem*{claim}{Claim}

\theoremstyle{definition}

\newtheorem{question}[theorem]{Question}

\newtheorem*{ack}{Acknowledgements}

\newenvironment{cproof}{\begin{proof}[Proof of the
		claim]}{\end{proof}}

\newtheorem{definition}[theorem]{Definition}
\newtheorem*{remark}{Remark}
\newtheorem{example}[theorem]{Example}

\title{On dense orbits in the space of subequivalence relations}

\author{François Le Maître}
\date{}
\begin{document}

\maketitle

\begin{abstract}
	We first explain how to endow the space of subequivalence relations of 
	any non-singular countable equivalence relation with a Polish topology, extending
	the framework of Kechris' recent monograph on subequivalence relations of probability
	measure-preserving (p.m.p.) countable equivalence relations.
	We then restrict to p.m.p.\ equivalence relations  and discuss dense orbits therein 
	for the natural action of the full group and of the automorphism group of the relation. 
	Our main result is a characterization of the subequivalence relations having a dense orbit in the space of subequivalence relations of the ergodic hyperfinite p.m.p.\ equivalence relation. We also show that in this setup, all orbits under the full group action are meager.
	We finally provide a few Borel complexity calculations of natural subsets in spaces
	of subequivalence relations using a natural metric we call the uniform metric. This answers some questions from an earlier version of Kechris' monograph.
\end{abstract}

\section{Introduction}

Measured group theory can roughly be defined as the study of countable groups through the partitions into orbits 
associated to their free probability measure preserving (p.m.p.) actions on standard probability space.
 A fundamental notion there is that of that of an \emph{orbit subgroup}: a countable group $\Lambda$ is an orbit subgroup of another countable group $\Gamma$ when $\Lambda$ admits a free p.m.p.\ action all of whose orbits are contained in those of a free p.m.p.\ action of $\Gamma$. 
 In this more flexible framework, while the Ornstein-Weiss theorem tells us that all countable infinite amenable subgroups are orbit subgroups of one another \cite{ornsteinErgodicTheoryAmenable1980}, the Gaboriau-Lyons theorem characterizes non amenable groups as being exactly those which admit the free group on two generators as an orbit subgroup \cite{gaboriauMeasurablegrouptheoreticSolutionNeumanns2009}.
Their result has had many applications, allowing to extend results which were only known for groups containing free subgroups to the optimal class of all non-amenable groups, see e.g.\ \cite{ioanaSubequivalenceRelationsPositivedefinite2009,sewardEveryActionNonamenable2014}.

This motivates the following more general question: given a partition of the space into orbits, what are the possible subpartitions into orbits? 
In more precise terms, let us call an equivalence relation on a standard probability space $(X,\mu)$ non-singular when it comes from an action of a countable group which preserves $\mu$ null-sets, and p.m.p.\ when it comes from an action of a countable group which actually preserves the measure $\mu$. 
We can now ask our question more precisely: given a non-singular equivalence relation, what are its possible subequivalence relations? 
An essential step towards answering such a question is Kechris' remarkable result that when the ambient equivalence relation is p.m.p., the space of subequivalence relations can be endowed with a natural Polish topology. 
This allows descriptive set theory to enter the picture, and as such it is the first building block in Kechris' monograph on 
spaces of subequivalence relations, whose initial version appeared online in 2013 \cite{kechrisSpaceMeasurepreservingEquivalence}.
We begin by extending this fundamental result to the non-singular case.

\begin{theoremi}[see Thm.~\ref{thm: sub R is Polish}]\label{thmi: Polish space of subeq} 
	Let $\mathcal R$ be a non-singular equivalence relation on a standard probability space. Then its space $\Sub(\mathcal R)$ of subequivalence relations is a Polish space for the topology induced by the measure algebra of $\mathcal R$. 
\end{theoremi}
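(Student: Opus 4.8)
The plan is to coordinatize $\Sub(\mathcal R)$ via the Feldman--Moore theorem and realize it as a \emph{closed} subset of a countable power of the Polish measure algebra of $(X,\mu)$. Fix a countable group $\Gamma$ of non-singular transformations of $(X,\mu)$ generating $\mathcal R$ (Feldman--Moore in its non-singular form), so that $\mathcal R=\bigcup_{g\in\Gamma}\mathrm{graph}(g)$, and equip $\mathcal R$ with the $\sigma$-finite measure $M(A)=\int_X\#\{y:(x,y)\in A\}\,\d\mu(x)$, whose measure algebra $\MAlg(\mathcal R)$ (metrized by a fixed equivalent probability measure $M'$) is Polish; this is the measure algebra appearing in the statement. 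For $g\in\Gamma$ let $\sigma_g\colon X\to\mathcal R$, $\sigma_g(x)=(x,gx)$, a Borel isomorphism onto $\mathrm{graph}(g)$ pushing $\mu$ to $M\restriction\mathrm{graph}(g)$; it induces a measure-algebra isomorphism, hence a homeomorphism, so the map $A_g\colon\MAlg(\mathcal R)\to\MAlg(X,\mu)$, $A_g(B)=\sigma_g^{-1}\bigl(B\wedge[\mathrm{graph}(g)]\bigr)$, is continuous. Since $\mathcal R$ is covered by the countably many graphs, $B=\bigvee_g\sigma_g(A_g(B))$, so $\Phi\colon B\mapsto(A_g(B))_{g\in\Gamma}$ is a continuous injection $\MAlg(\mathcal R)\to\MAlg(X,\mu)^\Gamma$; on $\Sub(\mathcal R)$ it sends $S$ to the family $A_g(S)=\{x:(x,gx)\in S\}$.

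The next step is to identify $\Phi(\Sub(\mathcal R))$ with the set $K$ of \emph{coherent} families $(A_g)_{g\in\Gamma}$, by which I mean: (i) $A_g\wedge[\{x:gx=hx\}]=A_h\wedge[\{x:gx=hx\}]$ for all $g,h$; (ii) $A_e=[X]$ (with $e\in\Gamma$ the identity); (iii) $g(A_g)=A_{g^{-1}}$ for all $g$; (iv) $A_g\wedge g^{-1}(A_h)\leq A_{hg}$ for all $g,h$. Read through the homeomorphisms $A\mapsto g(A)$ of $\MAlg(X,\mu)$ and the continuous lattice operations, each of (i)--(iv) is a closed condition on $\MAlg(X,\mu)^\Gamma$, so $K$ is closed, hence Polish. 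The inclusion $\Phi(\Sub(\mathcal R))\subseteq K$ is a routine translation of reflexivity, symmetry and transitivity of $S$ into (i)--(iv), with (i) accounting for the fact that a pair may lie on several graphs. For the reverse inclusion, from a coherent family I form $B=\bigvee_g\sigma_g(A_g)\in\MAlg(\mathcal R)$ and check, using (i)--(iv), that $B$ is the class of a genuine subequivalence relation; this requires choosing Borel representatives so that $\bigcup_g\sigma_g(A_g)$ is honestly reflexive, symmetric and transitive, which is the one genuinely fiddly point of the argument.

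It remains to upgrade the continuous bijection $\Phi\colon\Sub(\mathcal R)\to K$ to a homeomorphism, i.e.\ to show that if $A_g(S_n)\to A_g(S)$ in $\MAlg(X,\mu)$ for every $g$, then $M'(S_n\triangle S)\to0$. Disjointify the cover $(\mathrm{graph}(g))_g$ into a Borel partition $(G_g)_g$ of $\mathcal R$; then $M'(S_n\triangle S)=\sum_g M'\bigl((S_n\triangle S)\cap G_g\bigr)$, each summand is at most $\int_{A_g(S_n)\triangle A_g(S)}\frac{\d M'}{\d(\sigma_g)_*\mu}\,\d\mu$, which tends to $0$ by absolute continuity of the integral, while the summands are dominated by $M'(G_g)$ with $\sum_g M'(G_g)=M'(\mathcal R)<\infty$; dominated convergence concludes. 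Thus $\Sub(\mathcal R)$, with the subspace topology from $\MAlg(\mathcal R)$, is homeomorphic to the closed, hence Polish, set $K$, which proves the theorem.

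I expect the main obstacle to be not the closedness of transitivity --- condition (iv), which is transparent in these coordinates --- but the bookkeeping in the reconstruction step of the second paragraph, namely passing cleanly between elements of $\MAlg(\mathcal R)$ and honest Borel equivalence relations defined only up to null sets; a secondary technical point is the systematic use of non-singularity to ensure that the flip and the translations $A\mapsto g(A)$ act by homeomorphisms on the relevant measure algebras.
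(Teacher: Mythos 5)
Your argument is correct, but it takes a genuinely different route from the paper's primary proof. The paper's own proof of Theorem \ref{thmi: Polish space of subeq} (Theorem \ref{thm: sub R is Polish}) is two lines: once $\MAlg(\mathcal R,M)$ is known to be Polish, Proposition \ref{prop: convergence} shows every convergent sequence has a subsequence whose limit equals its $\liminf$, and since the $\liminf$ of a sequence of equivalence relations is again an equivalence relation, $\Sub(\mathcal R)$ is closed --- notably, this uses no non-singularity at all. The paper's second proof (Section \ref{sec: strong and low}) is closer in spirit to yours, showing each axiom is a closed condition directly in $\MAlg(\mathcal R,M)$; its cost is proving that the composition $(A,B)\mapsto A\circ B$ is continuous, which requires the auxiliary lower topology. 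Your Feldman--Moore coordinatization into $\MAlg(X,\mu)^\Gamma$ sidesteps that: transitivity becomes the manifestly closed condition (iv), and your final homeomorphism step is essentially the paper's Proposition \ref{prop: exhaustion by finite and topology} and Corollary \ref{cor: strong topology} (Kechris's strong topology). What you pay for this is the surjectivity of $\Phi$ onto $K$: the reconstruction does go through --- collect the countably many exceptional null sets of (i)--(iv) into a single null set $N$, observe that $\{(x,y)\in\mathcal R\colon x\in N\text{ or }y\in N\}$ is $M$-null precisely because $\mathcal R$ is non-singular, and redefine the relation to be trivial there --- but it is real bookkeeping that the $\liminf$ proof avoids entirely. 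So your approach buys a transparent treatment of transitivity at the price of an extra reconstruction step and an essential use of non-singularity (both for $A\mapsto gA$ to act by homeomorphisms and in the null-set saturation) that the paper's shortest proof does not need.
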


We endow $\mathcal R$ with the usual measure obtained by integrating cardinality of vertical fibers, allowing us to make sense of its measure algebra in the above result. 
In particular, we are identifying subequivalence relations which coincide on a conull set.
We refer the reader to Section \ref{sec: preliminaries} for details. Our description of the topology on the space of subequivalence relations was already known to Kechris in the p.m.p.\ case, see \cite[Sec.\ 4.4(1)]{kechrisSpaceMeasurepreservingEquivalence}. 

We actually provide two proofs of Theorem \ref{thmi: Polish space of subeq}. The first one is very short and uses the fact that in a measure algebra, converging sequences always admit subsequences whose limit is actually equal to their $\liminf$ (see Proposition \ref{prop: convergence}). 
This observation  provides a unifying viewpoint to some special cases noted by Kechris, see \cite[Thm.\ 5.1 and Thm.\ 18.3]{kechrisSpaceMeasurepreservingEquivalence}. 
The second proof is somehow more natural in that we directly show that the various axioms of being a subequivalence relation define closed subsets in the measure algebra. 
Along the way, we study a natural non-Hausdorff topology on the measure algebra that we call the lower topology. This topology has some nice continuity properties which also allow us to give a slick proof of the fact that the map which associates to a subset of an equivalence relation the equivalence relation it generates is Baire class one (see Proposition \ref{prop: continuity of generating}, 
which is a natural generalization of \cite[Prop.~19.1]{kechrisSpaceMeasurepreservingEquivalence}). 

Our next result is in the p.m.p.\ setup, and was motivated by the question of dense orbits in the space of subequivalence relations, asked by Kechris in his monograph. Indeed, it is very natural to try to understand subequivalence relations up to isomorphism, and this is precisely what the action of the automorphism group of $\mathcal R$ on subequivalence relations of $\mathcal R$ encodes. As it turns out, when $\mathcal R$ is the ergodic hyperfinite p.m.p.\ equivalence relation we obtain the following complete characterization of dense orbits for the natural actions of both the full group $[\mathcal R]$ and the automorphism group $\Aut(\mathcal R)$.

\begin{theoremi}[see Cor.\ \ref{cor: dense orbit in R0}]\label{thmi: dense orbit}
	For a subequivalence relation $\mathcal S$ of the hyperfinite ergodic p.m.p.\ equivalence relation $\mathcal R_0$, the following are equivalent:
\begin{enumerate}[(i)]
	\item $\mathcal S$ is aperiodic and has everywhere infinite index in $\mathcal R_0$;
	\item The $[\mathcal R_0]$-orbit of $\mathcal S$ is dense in $\Sub(\mathcal R_0)$;
	\item The $\Aut(\mathcal R_0)$-orbit of $\mathcal S$ is dense in $\Sub(\mathcal R_0)$.
\end{enumerate}
\end{theoremi}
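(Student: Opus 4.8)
The plan is to prove the cycle of implications $(i)\impl(iii)\impl(ii)\impl(i)$ — actually, since $[\mathcal R_0]\leq\Aut(\mathcal R_0)$, density of the $[\mathcal R_0]$-orbit trivially implies density of the $\Aut(\mathcal R_0)$-orbit, so it suffices to show $(iii)\impl(i)$ and $(i)\impl(ii)$. Throughout I will work with the topology on $\Sub(\mathcal R_0)$ described in Theorem~A, i.e.\ the one induced by the measure algebra of $\mathcal R_0$, so a basic neighborhood of $\mathcal S$ is specified by approximating, up to $\eps$ in the measure of $\mathcal R_0$, a finite list of ``rectangles'' coming from partial isomorphisms in $[[\mathcal R_0]]$ that lie in $\mathcal S$ or avoid $\mathcal S$.

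For $(iii)\impl(i)$ I would argue by contraposition, showing that both aperiodicity and infinite index are dynamical invariants that are, in a suitable sense, upper semicontinuous or at least that their failure is an open/dense-avoiding condition preserved under the $\Aut(\mathcal R_0)$-action. Concretely: if $\mathcal S$ has a non-null periodic part, say a positive-measure set $A$ on which every $\mathcal S$-class has size $\le n$, then every relation in the $\Aut(\mathcal R_0)$-orbit has a periodic part of the same measure with class sizes $\le n$; one then checks that the set of subequivalence relations with a periodic part of measure $\ge\mu(A)$ and bounded class size is closed and proper (it misses, e.g., $\mathcal R_0$ itself, or any aperiodic $\mathcal S'$), so the orbit cannot be dense. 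Similarly, if $\mathcal S$ has index $\le n$ on a positive-measure set, the ``index $\le n$ somewhere'' condition is closed and $\Aut(\mathcal R_0)$-invariant and proper. This step is essentially soft; the only care needed is to phrase ``aperiodic'' and ``everywhere infinite index'' correctly in the measure-algebra topology and to verify the closedness claims, which should follow from the continuity properties established around Proposition~\ref{prop: continuity of generating}.

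The substance is $(i)\impl(ii)$: assuming $\mathcal S$ is aperiodic of everywhere infinite index in $\mathcal R_0$, I must show that for any target $\mathcal T\in\Sub(\mathcal R_0)$, any finite collection of $\mathcal R_0$-rectangles, and any $\eps>0$, there is $g\in[\mathcal R_0]$ with $g\mathcal S g\inv$ matching $\mathcal T$ on those rectangles to within $\eps$. The natural approach is a Rokhlin-type / back-and-forth construction exploiting the ergodic hyperfinite structure of $\mathcal R_0$: by Dye's theorem $\mathcal R_0$ is generated by a single ergodic automorphism, and any target $\mathcal T$ is approximated on finitely many rectangles by a ``finite'' subequivalence relation — one generated by finitely many elements of $[\mathcal R_0]$, or even by a partition into finite $\mathcal T$-classes on most of $X$ together with the index data. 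Since $\mathcal S$ is aperiodic, it admits arbitrarily fine Rokhlin towers inside $\mathcal R_0$, and since it has infinite index, the complement of each $\mathcal S$-saturation of a small set is still ``$\mathcal R_0$-large'' in every $\mathcal S$-class; this should give enough room to conjugate $\mathcal S$ so that its restriction to a large tower base realizes the desired finite combinatorial pattern of $\mathcal T$. I would build $g$ on a decreasing sequence of tower levels, handling one rectangle of the target at a time, using the infinite-index hypothesis to ensure each new constraint can be met without disturbing the previous ones by more than a geometrically small error.

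The main obstacle, and the place where the hyperfiniteness of $\mathcal R_0$ is genuinely used, is controlling the \emph{index} of the conjugated relation: matching $\mathcal T$ on a rectangle forces certain points into, or out of, a common $\mathcal S$-class, and one must check these forced identifications are simultaneously consistent with $g\mathcal S g\inv$ still being a subequivalence relation of $\mathcal R_0$ of the right local structure — there is no obstruction precisely because $\mathcal S$ has everywhere infinite index, giving unboundedly many ``fresh'' $\mathcal S$-classes in each $\mathcal R_0$-class to absorb the constraints. I expect the cleanest way to organize this is to first reduce $\mathcal T$ to a subequivalence relation that is itself aperiodic of everywhere infinite index (density of such $\mathcal T$'s among all targets, proved separately using a similar but easier perturbation, then suffices by a diagonal argument), and then to invoke a measured analogue of the fact that any two such relations inside $\mathcal R_0$ are approximately conjugate — which is where an Ornstein–Weiss / Dye-style approximate isomorphism theorem for pairs $\mathcal S\le\mathcal R_0$ does the real work.
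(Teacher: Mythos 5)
Your high-level architecture (contrapositive for the necessity of (i), approximation of arbitrary targets by simpler ones for sufficiency) is broadly the same as the paper's, but both halves have genuine gaps. For $(iii)\Rightarrow(i)$, the condition ``index $\leq n$ on \emph{some} positive-measure set'' is not closed as stated: along a convergent sequence the witnessing sets can shrink to null measure, and even the quantified version ``index $\leq k$ on a set of measure $\geq\mu(A)$'' is not obviously closed. What the paper actually does (proof of Theorem \ref{thm: infinite index everywhere characterization}, (iii)$\Rightarrow$(i)) is extract from a finite-index piece $A$ a family $\varphi_1,\dots,\varphi_{2kN}\in[[\mathcal R]]$ whose ranges partition $X$, and run a counting argument yielding the uniform lower bound $\sum_{i<j}M(\varphi_i\inv\varphi_j\cap T\cdot\mathcal S)\geq \frac{1}{2kN^2}$ for \emph{every} $T\in\Aut(\mathcal R)$; this inequality is a closed condition violated by $\Delta_X$. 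That counting estimate is the content of the step, and calling it ``essentially soft'' misses it.

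For $(i)\Rightarrow(ii)$, the decisive missing ingredient is the intermediate characterization that the $[\mathcal R_0]$-orbit closure of $\mathcal S$ contains $\Delta_X$ if and only if $\mathcal S$ has everywhere infinite index. The paper proves the hard direction of this by fixing a uniquely generating sequence of moving partial involutions $(\varphi_i)$ and inductively constructing $T\in[\mathcal R]$ with $(T(x),T\varphi_i(x))\notin\mathcal S$ for a.e.\ $x$ and all $i\leq k$, using a countable dense subgroup of $[\mathcal R_0]$ that acts highly transitively on almost every orbit (Eisenmann--Glasner) together with the infinite-index hypothesis to produce fresh $\mathcal S$-classes at each stage. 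Your tower sketch has the right intuition about ``absorbing constraints into fresh classes'' but supplies no mechanism for realizing the required disjointness by an actual full-group element; this is where the work is. Moreover, your proposed ``cleanest organization'' --- reduce to targets that are themselves aperiodic of everywhere infinite index and invoke an approximate isomorphism theorem for pairs $\mathcal S\leq\mathcal R_0$ --- is circular: mutual approximate conjugacy of all such pairs is essentially the statement being proved, and no off-the-shelf Ornstein--Weiss/Dye-type result provides it. The paper goes in the opposite direction: it reduces to \emph{finite} targets $\mathcal T$ (dense in $\Subhyp(\mathcal R_0)=\Sub(\mathcal R_0)$), conjugates a copy of $\mathcal T$ inside $\mathcal S$ using aperiodicity, Maharam's lemma and Lemma \ref{lem: same orbit statistics}, and then applies the $\Delta_X$-approximation theorem to the restriction of $\mathcal S$ to a fundamental domain of that copy, gluing via Lemma \ref{lem: continuous union on partition}.
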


In this theorem, $\mathcal S$ being everywhere of infinite index in $\mathcal R$ means that for every $A\subseteq X$ of positive measure, the restriction of $\mathcal S$ to $A$ has infinite index in the restriction of $\mathcal R$ to $A$ (almost every $\mathcal R_{\restriction A}$-class splits into infinitely many $\mathcal S_{\restriction A}$-classes; see the end of Section \ref{sec: cond meas and index} for more on this).

\begin{example}\label{ex: diffuse example}
	View $\mathcal R_0$ as the cofinite equivalence relation on $X=\{0,1\}^\N$ endowed with the probability measure $\mu=(\frac 12(\delta_0+\delta_1))^{\otimes\N}$, namely $((x_n),(y_n)\in\mathcal R_0$ iff there is $N\in\N$ such that $x_n=y_n$ for all $n\geq N$. 
	Define $\mathcal S$ by $((x_n),(y_n))\in\mathcal S$ iff there is $N\in\N$ such that $x_{2n}=y_{2n}$ for all $n\geq N$. Then $\mathcal S$ is both aperiodic and has everywhere infinite index in $\mathcal R_0$ (see Proposition \ref{prop: everywhere infinite index from product}), so by the above theorem it has a dense orbit in $\Sub(\mathcal R_0)$.
\end{example}

\begin{example}\label{ex: ergodic example}
	View $\mathcal R_0$ as the  equivalence relation on $X=\{0,1\}^{\Z^2}$ endowed with the probability measure $\mu=(\frac 12(\delta_0+\delta_1))^{\otimes\Z^2}$ generated by the Bernoulli shift of $\Z^2$.
	Then since the shift is mixing, the subequivalence relation $\mathcal S$ generated by $\Z\leq \Z^2$ is ergodic. By construction $\mathcal S$ is of infinite index, so since it is ergodic it is everywhere of infinite index. By the above theorem $\mathcal S$ has a dense orbit in $\Sub(\mathcal R_0)$.
	Note that by \cite[Thm.~10.1]{kechrisSpaceMeasurepreservingEquivalence}, ergodic subequivalence relations actually form a $G_\delta$ set, so the Baire category theorem provides us with a dense $G_\delta$ subset consisting of \emph{ergodic} subequivalence relations of $\mathcal R_0$ satisfying the equivalent conditions of Theorem \ref{thmi: dense orbit}.
\end{example}

Our approach to Theorem \ref{thmi: dense orbit} is very much inspired by a joint work in preparation with Fima, Mukherjee and Patri where we similarly characterize dense orbits in the space of von Neumann subalgebras of the hyperfinite type II$_1$ factor. In this setup, we have to rely on a difficult result of Popa which characterizes the subalgebras which have a sequence of unitary conjugates converging to the trivial subalgebra, see Lem.\ 2.3 from \cite{popaAsymptoticOrthogonalizationSubalgebras2019}. Here, we similarly first need to understand which subequivalence relations have a sequence of translates converging to the trivial subequivalence relations $\Delta_X=\{(x,x)\colon x\in X\}$, which is the content of our next result.

\begin{theoremi}[see Thm.\ \ref{thm: infinite index everywhere characterization}]\label{thmi: infinite index everywhere characterization}
	Let $\mathcal R$ be an aperiodic p.m.p.\ equivalence relation on a standard probability space  $(X,\mu)$. Let $\mathcal S\in\Sub(\mathcal R)$ be a subequivalence relation. The following are equivalent:
	\begin{enumerate}[(i)]
		\item \label{itemi: infinite index} $\mathcal S$ has everywhere infinite index in $\mathcal R$;
		\item \label{itemi: fullgroup orbit} the closure of the $[\mathcal R]$-orbit of $\mathcal S$ contains $\Delta_X$;
		\item \label{itemi: Aut orbit} the closure of the $\Aut(\mathcal R)$-orbit of $\mathcal S$ contains $\Delta_X$.
	\end{enumerate}
\end{theoremi}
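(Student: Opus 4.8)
The plan is to establish the cycle $(\ref{itemi: infinite index})\Rightarrow(\ref{itemi: fullgroup orbit})\Rightarrow(\ref{itemi: Aut orbit})\Rightarrow(\ref{itemi: infinite index})$. The implication $(\ref{itemi: fullgroup orbit})\Rightarrow(\ref{itemi: Aut orbit})$ is immediate: every element of the full group $[\mathcal R]$ is measure-preserving and normalizes $\mathcal R$, so $[\mathcal R]\leq\Aut(\mathcal R)$, whence the $[\mathcal R]$-orbit of $\mathcal S$ is contained in its $\Aut(\mathcal R)$-orbit and the same inclusion holds for their closures.

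The implication $(\ref{itemi: infinite index})\Rightarrow(\ref{itemi: fullgroup orbit})$ is the heart of the matter and the analogue here of Lem.\ 2.3 from \cite{popaAsymptoticOrthogonalizationSubalgebras2019}. First I would reduce it to a finitary statement. Fix an enumeration $(\psi_i)_{i\geq 1}$ of fixed-point-free elements of $[\mathcal R]$ with $\mathcal R=\Delta_X\cup\bigcup_i\mathrm{graph}(\psi_i)$ (always possible, using aperiodicity to absorb the fixed points of a generating family). Since the sets $\mathrm{graph}(\psi_i)$ together with $\Delta_X$ form a $\sigma$-finite exhaustion of $\mathcal R$, and $\Delta_X\subseteq T_m\cdot\mathcal S$ always, convergence $T_m\cdot\mathcal S\to\Delta_X$ in $\MAlg(\mathcal R)$ is equivalent to $\mu(\{x:(x,\psi_ix)\in T_m\cdot\mathcal S\})\to 0$ for every $i$. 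By a diagonal argument it therefore suffices to prove: for every $\eps>0$ and all fixed-point-free $\psi_1,\dots,\psi_k\in[\mathcal R]$ there is $T\in[\mathcal R]$ with $\sum_{i\leq k}\mu(\{x:(T^{-1}x,T^{-1}\psi_ix)\in\mathcal S\})<\eps$; that is, $T^{-1}$ should send $x$ and $\psi_ix$ into distinct $\mathcal S$-classes for almost every $x$ and every $i\leq k$. To build such a $T$ I would use hypothesis $(\ref{itemi: infinite index})$ to first produce, by a maximality/exhaustion argument exploiting that almost every $\mathcal R$-class contains infinitely many $\mathcal S$-classes in the strong everywhere sense, a finite subequivalence relation $\mathcal E\subseteq\mathcal R$ with $\mathcal E\cap\mathcal S=\Delta_X$ all of whose classes have size larger than a prescribed $L$ off a set of measure $<\eps$; each $\mathcal E$-class is then a large partial transversal of $\mathcal S$. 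A Rokhlin-tower construction over $\mathcal E$ should then allow one to define $T\in[\mathcal R]$ for which $T^{-1}$ places the two endpoints of almost every edge of the bounded-degree graph $\{(x,\psi_ix):i\leq k,\ x\in X\}$ into distinct $\mathcal S$-classes, using the points of the $\mathcal E$-classes as a reservoir of pairwise $\mathcal S$-inequivalent targets. Carrying this out while keeping $T$ genuinely measure-preserving and inside $[\mathcal R]$, uniformly over all $\mathcal S$ (which may be ergodic, or have small classes of varying cardinalities), is the main obstacle.

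Finally, for $(\ref{itemi: Aut orbit})\Rightarrow(\ref{itemi: infinite index})$ I would argue by contraposition. If $\mathcal S$ is not everywhere of infinite index, then shrinking a witnessing set and using that the index is an $\mathcal R$-invariant measurable function produces a Borel set $A$ with $\mu(A)=a>0$ and $N\in\N$ such that $[\mathcal R_{\restriction A}:\mathcal S_{\restriction A}]\leq N$ almost everywhere on $A$. Since $\mathcal R$ is aperiodic, fix an aperiodic $\phi\in[\mathcal R]$ (so $\phi^\ell$ is fixed-point-free for $\ell\neq 0$) and set $\mathcal F:=\bigcup_{0<|\ell|\leq T}\mathrm{graph}(\phi^\ell)$, a fixed finite-measure subset of $\mathcal R\setminus\Delta_X$, where $T=T(a,N)$ will be chosen below. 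Given $\theta_m\in\Aut(\mathcal R)$ with $\theta_m\cdot\mathcal S\to\Delta_X$, the sets $A_m:=\theta_m(A)$ satisfy $\mu(A_m)=a$ and $[\mathcal R_{\restriction A_m}:(\theta_m\cdot\mathcal S)_{\restriction A_m}]\leq N$. Averaging $\mu\big(\phi^{j_0}(A_m)\cap\dots\cap\phi^{j_N}(A_m)\big)$ over the $(N+1)$-subsets $\{j_0<\dots<j_N\}$ of $\{0,\dots,T\}$, using $\sum_{j=0}^{T}\mu(\phi^j(A_m))=(T+1)a$ and convexity, shows that for $T$ large (depending only on $a$ and $N$) some such subset has intersection of measure $\geq a^{N+1}/2$; normalizing to start at $0$ and using that there are finitely many possibilities, we pass to a subsequence along which a fixed tuple $0=n_0<n_1<\dots<n_N\leq T$ works, i.e.\ $\mu\big(A_m\cap\phi^{n_1}(A_m)\cap\dots\cap\phi^{n_N}(A_m)\big)\geq a^{N+1}/2$. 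For $x$ in this set the $N+1$ distinct points $x,\phi^{-n_1}x,\dots,\phi^{-n_N}x$ all lie in the $\mathcal R$-class of $x$ and in $A_m$, so the index bound forces two of them, say $\phi^{-n_i}x$ and $\phi^{-n_j}x$ with $i<j$, to be $(\theta_m\cdot\mathcal S)$-equivalent; since $(\phi^{-n_i}x,\phi^{-n_j}x)\in\mathrm{graph}(\phi^{n_i-n_j})\subseteq\mathcal F$ and these two points are distinct, this gives $\mu_{\mathcal R}\big((\theta_m\cdot\mathcal S)\cap\mathcal F\setminus\Delta_X\big)\geq c>0$ for all $m$ along the subsequence, with $c$ depending only on $a$ and $N$ — contradicting $\theta_m\cdot\mathcal S\to\Delta_X$. (The identical argument with $\theta_m\in[\mathcal R]$ would give $\neg(\ref{itemi: infinite index})\Rightarrow\neg(\ref{itemi: fullgroup orbit})$, which is not needed for the cycle.) Thus the only serious difficulty is the construction in $(\ref{itemi: infinite index})\Rightarrow(\ref{itemi: fullgroup orbit})$.
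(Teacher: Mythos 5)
Your treatment of (ii)$\Rightarrow$(iii) matches the paper's (trivial inclusion of orbits), and your contrapositive argument for (iii)$\Rightarrow$(i) is correct and genuinely different in its details from the paper's: where the paper uses Maharam's lemma to produce $2kN$ pseudo-full-group translates $\varphi_1,\dots,\varphi_{2kN}$ of a base $B$ partitioning $X$ and then counts how many translates of a point of $B$ can land in $A$, you use the powers $\phi^0,\dots,\phi^T$ of a single aperiodic $\phi\in[\mathcal R]$ together with a Jensen-type averaging bound for $f=\sum_{j}\chi_{\phi^j(A_m)}$ to extract $N+1$ powers whose common intersection with $A_m$ has measure bounded below uniformly in $m$. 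Both arguments conclude identically: a uniform positive lower bound on $M\bigl((\theta\cdot\mathcal S)\cap\mathcal F\setminus\Delta_X\bigr)$ over a fixed finite-$M$-measure window $\mathcal F$, a closed condition violated by $\Delta_X$. Your version trades conditional measures for the existence of an aperiodic element of $[\mathcal R]$, which is standard for aperiodic p.m.p.\ relations; this part is sound.

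The gap is in (i)$\Rightarrow$(ii), and you have flagged it yourself. Your reduction to a finitary statement is correct and is the same as the paper's (via the exhaustion of $\mathcal R\setminus\Delta_X$ by graphs of finitely many partial isomorphisms), but the construction of $T\in[\mathcal R]$ with $(T\inv x,T\inv\psi_i x)\notin\mathcal S$ for a.e.\ $x$ and all $i\leq k$ is only a declaration of intent. The proposed route --- a finite subequivalence relation $\mathcal E$ transverse to $\mathcal S$ with large classes, followed by a Rokhlin-tower construction --- is not carried out, and the genuinely hard point is not producing many pairwise $\mathcal S$-inequivalent points per $\mathcal R$-class: it is choosing the images $T\inv x$ \emph{consistently}, since each such choice simultaneously constrains all $k$ pairs through $x$ and all pairs through each $\psi_i x$, while $T\inv$ must remain a single measure-preserving bijection with graph in $\mathcal R$. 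The paper resolves exactly this with a different device: it fixes a countable dense subgroup $\Gamma=\{\gamma_n\}$ of $[\mathcal R]$, which by the Eisenmann--Glasner theorem acts highly transitively on almost every orbit, and builds $T$ as an increasing union of partial isomorphisms $\psi_n$ (each a restriction of some $\gamma_n$ to the set where the required non-$\mathcal S$-equivalences hold), using everywhere infinite index of $\mathcal S$ on the complement of the range constructed so far to show that the exhaustion fills a conull set. Without this step, or a concrete substitute for it, the main implication remains unproved.
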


The proof of Theorem \ref{thmi: infinite index everywhere characterization} relies on an inductive construction, using a countable dense subgroup of the full group and the fact that such a subgroup acts highly transitively on almost every orbit, a result due to Eisenmann and Glasner \cite[Thm.\ 1.19]{eisenmannGenericIRSFree2016}.

Once this is done, we prove Theorem \ref{thmi: dense orbit} using the fact that finite equivalence relations are dense in $\Sub(\mathcal R_0)$ and that they can always be translated inside any aperiodic subequivalence relation. This allows us to apply Theorem \ref{thmi: infinite index everywhere characterization} on a fundamental domain of the finite subequivalence relation we want to approximate. A more general statement on the closed subspace of hyperfinite subequivalence relations can actually be proved this way, see Corollary \ref{cor: dense orbit in subhyp}.

Using Ioana's intertwining of subequivalence relations \cite[Lem.\ 1.7]{ioanaUniquenessGroupMeasure2012} and the two above examples of subequivalence relations, we moreover have the following result, which again follows from a more general statement on the space hyperfinite subequivalence relations (see Theorem \ref{thm: meager orbits}).

\begin{theoremi}[{see Cor.\ \ref{cor: orbits meager R0}}]\label{thmi: meager orbits}
	Let $\mathcal R_0$ be the ergodic p.m.p. hyperfinite equivalence relation. Then all $[\mathcal R_0]$-orbits are meager in $\Sub(\mathcal R_0)$.
\end{theoremi}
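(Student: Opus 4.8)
The plan is to exhibit, for each $\mathcal S\in\Sub(\mathcal R_0)$, a nonempty open set $U\subseteq\Sub(\mathcal R_0)$ which the orbit $[\mathcal R_0]\cdot\mathcal S$ misses entirely, except on a meager set; more precisely, to show that the orbit closure $\overline{[\mathcal R_0]\cdot\mathcal S}$ has empty interior, and then combine this with a Baire-category argument. The difficulty is that a single orbit closure need not be nowhere dense for trivial reasons, so I would instead argue dichotomically on the isomorphism type of $\mathcal S$, using Ioana's intertwining criterion from \cite[Lem.\ 1.7]{ioanaUniquenessGroupMeasure2012} as the separating device. Recall that two subequivalence relations $\mathcal S,\mathcal T$ of $\mathcal R_0$ \emph{intertwine} roughly when some nonzero partial isometry of $\mathcal R_0$ conjugates a corner of $\mathcal S$ into $\mathcal T$; the key point is that intertwining is preserved under translating $\mathcal S$ by elements of $[\mathcal R_0]$ (the partial isometry can be precomposed), and — crucially — if $\mathcal S_n\to\mathcal T$ in $\Sub(\mathcal R_0)$, then intertwining of the limit can be detected from the $\mathcal S_n$ along a subsequence using Proposition \ref{prop: convergence} (liminf of a subsequence equals the limit). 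Hence the set of $\mathcal T$ that intertwine with a \emph{fixed} $\mathcal S$ is invariant under the orbit-closure operation.

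The main step is then to produce, for every $\mathcal S$, a target $\mathcal T$ that does \emph{not} intertwine with $\mathcal S$ and whose conjugacy class (or at least some orbit whose closure has nonempty interior) is available. Here is where Examples \ref{ex: diffuse example} and \ref{ex: ergodic example} enter: they furnish two concrete subequivalence relations $\mathcal S_{\mathrm{diff}}$ and $\mathcal S_{\mathrm{erg}}$ of $\mathcal R_0$, both satisfying condition (i) of Theorem \ref{thmi: dense orbit}, hence both having dense $[\mathcal R_0]$-orbit. The first is diffuse (everywhere aperiodic but not ergodic) and the second is ergodic, and they are of genuinely different nature: no translate of one intertwines the other. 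Given an arbitrary $\mathcal S$, I distinguish two cases. If $\mathcal S$ does \emph{not} intertwine $\mathcal S_{\mathrm{diff}}$, then since $[\mathcal R_0]\cdot\mathcal S_{\mathrm{diff}}$ is dense, the orbit closure $\overline{[\mathcal R_0]\cdot\mathcal S}$ is disjoint from a dense set, hence has empty interior; if on the other hand $\mathcal S$ \emph{does} intertwine $\mathcal S_{\mathrm{diff}}$, then an intertwining/ergodicity argument forces $\mathcal S$ to have a diffuse "piece", so it cannot intertwine the ergodic $\mathcal S_{\mathrm{erg}}$, and we run the same argument with $\mathcal S_{\mathrm{erg}}$ in place of $\mathcal S_{\mathrm{diff}}$. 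In either case $\overline{[\mathcal R_0]\cdot\mathcal S}$ has empty interior.

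To upgrade "empty interior of the closure" to "meager", I would note that $\Sub(\mathcal R_0)$ is Polish and the action of the Polish group $[\mathcal R_0]$ is continuous, so each orbit is an analytic (indeed Borel, being a continuous image of a Polish group with Polish stabilizer quotient) set; an orbit whose closure has empty interior is nowhere dense when the orbit is, say, $F_\sigma$, or more robustly: a non-meager orbit of a Polish group action is necessarily $G_\delta$ and somewhere dense (a Pettis/Effros-type argument), contradicting empty interior of the closure. I would phrase this cleanly by invoking the fact that an orbit of a continuous Polish group action which is non-meager must be open in its closure, hence somewhere dense. The expected main obstacle is the case analysis in the middle paragraph: verifying that intertwining with $\mathcal S_{\mathrm{diff}}$ really does obstruct intertwining with $\mathcal S_{\mathrm{erg}}$ will require a careful use of the structure of Ioana's criterion — specifically, that a diffuse subequivalence relation cannot be "absorbed" into an ergodic one via a partial isometry restricted to a positive-measure set — together with ergodicity of $\mathcal R_0$ to propagate local intertwining to a contradiction. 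This is the technical heart, and everything else (density of the two examples, stability of intertwining under orbit closure, the Baire-category packaging) is comparatively routine given the results already established in the paper.
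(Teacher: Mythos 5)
There is a genuine gap, and it is fatal to the overall strategy rather than a fixable detail. You propose to show that for \emph{every} $\mathcal S$ the orbit closure $\overline{[\mathcal R_0]\cdot\mathcal S}$ has empty interior. But Theorem \ref{thmi: dense orbit} (Corollary \ref{cor: dense orbit in R0}) asserts that any aperiodic $\mathcal S$ of everywhere infinite index --- in particular $\mathcal S_{\mathrm{diff}}$ and $\mathcal S_{\mathrm{erg}}$ themselves --- has \emph{dense} $[\mathcal R_0]$-orbit, so its orbit closure is all of $\Sub(\mathcal R_0)$. Such orbits are meager but certainly not nowhere dense, so no argument that concludes ``the orbit closure has empty interior in either case'' can be correct. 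The precise step that breaks is your claim that (non-)intertwining passes to orbit closures: the relation $\prec$ is only $F_\sigma$ (Lemma \ref{lem: intertw is Fsigma} shows its \emph{complement} is $G_\delta$), not closed, so from $\mathcal S\not\prec\mathcal T$ you cannot conclude that $\overline{[\mathcal R_0]\cdot\mathcal S}$ avoids the orbit of $\mathcal T$. Your own case analysis exhibits the failure: $\mathcal S_{\mathrm{diff}}$ should not intertwine the ergodic $\mathcal S_{\mathrm{erg}}$, yet $\mathcal S_{\mathrm{erg}}$ lies in the closure of the orbit of $\mathcal S_{\mathrm{diff}}$ since that orbit is dense.

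The paper's proof (Theorem \ref{thm: meager orbits}) uses the same two ingredients --- Ioana's intertwining and the two examples --- but packages them in the only way that is compatible with the existence of dense orbits, namely a Glasner--Weiss style Baire-category argument. Suppose some orbit is non-meager; since a dense orbit exists, the topological $0$-$1$ law upgrades this to a \emph{comeager} orbit, so the orbit equivalence relation $E$ is comeager in $\Subhyp(\mathcal R_0)\times\Subhyp(\mathcal R_0)$. On the other hand $E\subseteq{\prec}$ (because $\mathcal S\prec\mathcal S$ always and $\prec$ is $[\mathcal R_0]$-invariant), while the complement of $\prec$ is a $G_\delta$ set containing the product $[\mathcal R_0]\cdot\mathcal S_{\mathrm{erg}}\times[\mathcal R_0]\cdot\mathcal S_{\mathrm{diff}}$, which is dense by Theorem \ref{thm: closure orbit contains hf}; hence that complement is comeager as well, and two comeager sets must meet --- a contradiction. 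If you want to salvage your write-up, replace the ``empty interior of the closure'' goal by this $0$-$1$ law argument; the verification that an ergodic $\mathcal S$ does not intertwine into a relation with diffuse ergodic decomposition (your ``technical heart'') is indeed the content of the Claim in the paper's proof and is carried out exactly as you anticipate.
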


It then follows from Mycielski's theorem (see e.g.\ \cite[Thm.\ 5.3.1]{gaoInvariantDescriptiveSet2009})  that $\mathcal R_0$ contains a continuum of aperiodic subequivalence relations of infinite index which are in pairwise disjoint full group orbits. Moreover, by \cite[Prop.\ 10.1]{kechrisSpaceMeasurepreservingEquivalence} and Example \ref{ex: ergodic example}, such a continuum can be found inside ergodic subequivalence relations.
We leave the following question open. 
\begin{question}
	Can $\Sub(\mathcal R_0)$ contain a comeager $\Aut(\mathcal R_0)$-orbit?
\end{question}

This question can also be asked for equivalence relations $\mathcal R$ different from $\mathcal R_0$, although the existence of dense $\Aut(\mathcal R)$- or $[\mathcal R]$-orbits in their space of subequivalence relations is already an open problem (see the remark preceding Corollary \ref{cor: dense orbit in R0} for some examples where there are no dense orbits though). Also note that if $\mathcal R$ is a type II$_\infty$ or type III ergodic non-singular equivalence relation, it is not hard to show that there are always dense full group orbits in $\Sub(\mathcal R)$ (see the remark right before Section \ref{sec: meager orbits} for a sketch of proof), but the question of comeager orbits is completely open there.

Our paper concludes with a few complexity calculations, answering some questions from an earlier version of \cite{kechrisSpaceMeasurepreservingEquivalence}. Let us highlight here a way of understanding Kechris' uniform topology on the space of subequivalence relations which we use: given two non-singular equivalence relations $\mathcal R_1$ and $\mathcal R_2$, denote by $\mathfrak C(\mathcal R_1,\mathcal R_2)$ the set of Borel subsets $A$ of $X$ such that $\mathcal R_{1\restriction A}=\mathcal R_{2\restriction A}$, and define
$$d_u(\mathcal R_1,\mathcal R_2)=1-\sup_{A\in\mathfrak C(\mathcal R_1,\mathcal R_2)}\mu(A)$$
This defines a metric that we propose to call the uniform metric since it induces the uniform topology defined in \cite[Sec.\ 4.6]{kechrisSpaceMeasurepreservingEquivalence}, although it looks finer a priori (to see that the two topologies coincide, one needs to use \cite[Thm.~1]{ioanaSubequivalenceRelationsPositivedefinite2009}). 

Let us finish by outlining the paper: along with basic preliminaries, Section \ref{sec: preliminaries} provides a first proof of Theorem \ref{thmi: Polish space of subeq} using the $\liminf$.  Section \ref{sec: strong and low} contains the second proof, which relies in a more direct way on the topology of the measure algebra, using as well the lower topology that we define and study therein. Section \ref{sec: dense orbit} is devoted to the proof of Theorem \ref{thmi: dense orbit} and \ref{thmi: meager orbits} in their more general form, and contains in particular the proof of Theorem \ref{thmi: infinite index everywhere characterization} in Subsection \ref{sec: converging to Delta}. Finally, Section \ref{sec: uniform} contains our complexity computations, which use the uniform metric.
\begin{ack}
	I would like to thank Alekos Kechris for his remarks and encouragements on this paper, and Todor Tsankov for discussions around this topic. I am very much indebted to my coauthors Pierre Fima, Kunal Mukherjee and Issan Patri for many of the ideas in this paper, which were developped in the parallel framework of subalgebras of finite von Neumann algebras. I am also thankful to Damien Gaboriau for his comments, and to Stefaan Vaes for allowing me to include his remark on the von Neumann algebraic way of understanding the space of subequivalence relations.
	Finally, I am very grateful to the anonymous referee for their helpful comments which improved the paper in many ways.
\end{ack}

\section{The Polish topology on the space of subequivalence relations in the non-singular setup}\label{sec: preliminaries}

The main purpose of this section is to extend the topology on the space of subequivalence relations from \cite{kechrisSpaceMeasurepreservingEquivalence} to the non-singular setup. To do so, we use the framework of measure algebras, which does yield the right topology for the probability measure-preserving case as noted in \cite[Sec.\ 4.4(1)]{kechrisSpaceMeasurepreservingEquivalence}. This direct approach also 
sheds a new light on some results from \cite{kechrisSpaceMeasurepreservingEquivalence}, such as Theorem 5.1 and Proposition 4.27. 

\subsection{The measure algebra of a standard \texorpdfstring{$\sigma$}{sigma}-finite space}

Let $(Y,\lambda)$ be a standard $\sigma$-finite measured space, i.e.\ a standard Borel space endowed with a $\sigma$-finite atomless measure.
Such spaces are always isomorphic either to $\R$ endowed with the Lebesgue measure or a finite length interval, also endowed with the Lebesgue measure.
The \textbf{measure algebra} of $(Y,\lambda)$ is the space 
$\MAlg(Y,\lambda)$ of all Borel subsets of $X$, where we identify $A,B\subseteq X$ Borel as soon as $\lambda(A\bigtriangleup B)=0$.

In order to endow $\MAlg(Y,\lambda)$ with a complete metric, we first have to choose some finite measure $\mu$ in the same class as $\lambda$, i.e.\ sharing the same measure zero sets. 
Note that this does not change the definition of the measure algebra.
We can then equip the measure algebra $\MAlg(Y,\lambda)=\MAlg(X,\mu)$  with the metric $d_\mu(A,B)=\mu(A\bigtriangleup B)$. 

\begin{lemma}\label{lem: unif cont equiv meas}
	Let $\mu$ and $\mu'$ be two finite measures on $Y$ in the same equivalence class.
	Then $d_\mu$ and $d_{\mu'}$ are uniformly equivalent: given any $\epsilon>0$, there is $\delta>0$ such that for all $A,B\in\MAlg(Y,\mu)$, $d_\mu(A,B)<\delta$ implies $d_{\mu'}(A,B)<\epsilon$ and vice-versa.
\end{lemma}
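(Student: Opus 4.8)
The plan is to reduce the statement to a single-set formulation and then invoke the Radon--Nikodym theorem. First I would observe that $d_\mu(A,B)$ depends only on the symmetric difference $C=A\bigtriangleup B$, and that every Borel set arises this way (take $A=C$, $B=\emptyset$). Hence the asserted uniform equivalence of $d_\mu$ and $d_{\mu'}$ is equivalent to the following two-sided statement: for every $\epsilon>0$ there is $\delta>0$ such that for every Borel $C\subseteq Y$ one has $\mu(C)<\delta\implies\mu'(C)<\epsilon$, together with the symmetric implication obtained by interchanging $\mu$ and $\mu'$.

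Next, since $\mu$ and $\mu'$ are mutually absolutely continuous, the Radon--Nikodym theorem provides a density $f=\frac{d\mu'}{d\mu}\in\LL^1(Y,\mu)$, which is nonnegative with $\int_Y f\,d\mu=\mu'(Y)<\infty$. Given $\epsilon>0$, integrability of $f$ lets us pick $M>0$ with $\int_{\{f>M\}}f\,d\mu<\epsilon/2$ (dominated convergence applied to $f\mathbf 1_{\{f>M\}}$ as $M\to\infty$); then set $\delta=\epsilon/(2M)$. For any Borel $C$ with $\mu(C)<\delta$ one estimates
$$\mu'(C)=\int_C f\,d\mu=\int_{C\cap\{f\le M\}}f\,d\mu+\int_{C\cap\{f>M\}}f\,d\mu\le M\mu(C)+\int_{\{f>M\}}f\,d\mu<\frac\epsilon2+\frac\epsilon2=\epsilon.$$
The symmetric implication follows from the same argument applied to the density $\frac{d\mu}{d\mu'}\in\LL^1(Y,\mu')$, which exists and is integrable for exactly the same reason.

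There is no serious obstacle here; the only point worth emphasizing is that mutual absolute continuity — equivalently, the hypothesis that $\mu$ and $\mu'$ lie in the same measure class — is precisely what guarantees that \emph{both} Radon--Nikodym densities exist as genuine $\LL^1$ functions, so that the $\epsilon$--$\delta$ argument can be run in both directions. (Alternatively, one could avoid Radon--Nikodym and argue by contradiction: a sequence of Borel sets $C_n$ with $\mu(C_n)<2^{-n}$ but $\mu'(C_n)\ge\epsilon$ would have $\mu(\limsup_n C_n)=0$ by Borel--Cantelli, hence $\mu'(\limsup_n C_n)=0$ by absolute continuity, contradicting $\mu'(\limsup_n C_n)=\lim_N\mu'(\bigcup_{n\ge N}C_n)\ge\epsilon$, which uses finiteness of $\mu'$.)
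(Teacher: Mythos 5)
Your proof is correct and follows essentially the same route as the paper: both reduce the claim, via symmetry and the identity $d_\mu(A,B)=\mu(A\bigtriangleup B)$, to the statement that for every $\epsilon>0$ there is $\delta>0$ with $\mu(C)<\delta\implies\mu'(C)<\epsilon$. The only difference is that the paper cites this fact from Cohn's textbook, whereas you supply a self-contained (and correct) proof of it via the Radon--Nikodym density, with a valid Borel--Cantelli alternative.
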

\begin{proof}
	By symmetry, given $\epsilon>0$ it suffices to find a $\delta$ such that the direct implication holds. 
	But this is a direct consequence of the fact that $\mu$ and $\mu'$ are in the same class: indeed by 
	\cite[Lem.\ 4.2.1]{cohnMeasureTheorySecond2013}, there actually is $\delta>0$ such that whenever $\mu(C)<\delta$, 
	we must have $\mu'(C)<\epsilon$.
\end{proof}

In particular, the above lemma yields that even when $\lambda$ is infinite, its measure algebra is endowed with a natural topology, independent of the choice of some $\mu$ finite in the class of $\lambda$.
The fact that that $d_\mu$ is complete is usually proved via the following lemma, which is important to us on its own.

\begin{lemma}\label{lem: converging subsequence liminf}
	Let $(A_n)$ be a sequence of elements of $\MAlg(Y,\mu)$ such that $\sum_n\mu(A_n\bigtriangleup A_{n+1})<+\infty$. Let $\liminf A_n=\bigcup_{N}\bigcap_{n\geq N} A_n$, then the sequence $(A_n)$ converges to
	$\liminf A_n$.
\end{lemma}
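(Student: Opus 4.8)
I want to show that under the summability hypothesis $\sum_n \mu(A_n \triangle A_{n+1}) < \infty$, the sequence $(A_n)$ converges to $L := \liminf A_n = \bigcup_N \bigcap_{n \geq N} A_n$ in the metric $d_\mu$. The natural route is to bound $\mu(A_n \triangle L)$ by a tail of the convergent series, which forces it to $0$. So the core task is to relate the symmetric difference $A_n \triangle L$ to the quantities $\mu(A_k \triangle A_{k+1})$ for $k \geq n$.

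**Key steps.** First I would record the elementary telescoping estimate: for $m > n$, $\mu(A_n \triangle A_m) \leq \sum_{k=n}^{m-1} \mu(A_k \triangle A_{k+1}) =: \epsilon_n$, where $\epsilon_n \to 0$ as $n \to \infty$ by summability (indeed $\epsilon_n \leq \sum_{k \geq n}\mu(A_k\triangle A_{k+1})$, a tail of a convergent series). Next, introduce the auxiliary sets $B_N := \bigcap_{n \geq N} A_n$, so that $L = \bigcup_N B_N$ and the $B_N$ are increasing, hence $\mu(B_N) \to \mu(L)$ and in fact $B_N \to L$ in $d_\mu$. Now I would estimate $\mu(A_N \triangle B_N)$: since $B_N \subseteq A_N$, this equals $\mu(A_N \setminus B_N) = \mu\bigl(\bigcup_{m \geq N}(A_N \setminus A_m)\bigr) \leq \sum_{m > N}\mu(A_N \setminus A_m)$. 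Wait — that sum need not converge directly, so instead I bound it as $\mu(A_N \setminus B_N) = \lim_{m\to\infty}\mu(A_N \setminus (A_N \cap A_{N+1} \cap \cdots \cap A_m))$, and each finite-intersection term is at most $\sum_{k=N}^{m-1}\mu(A_k \triangle A_{k+1})$ (since $A_N \setminus \bigcap_{j=N}^m A_j \subseteq \bigcup_{j=N}^{m-1}(A_j \setminus A_{j+1}) \subseteq \bigcup_{j=N}^{m-1}(A_j \triangle A_{j+1})$). Taking the limit in $m$ gives $\mu(A_N \triangle B_N) \leq \sum_{k \geq N}\mu(A_k \triangle A_{k+1}) = \epsilon_N$.

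**Finishing.** Combining, for any $N$ and any $n \geq N$ I get $d_\mu(A_n, L) \leq d_\mu(A_n, A_N) + d_\mu(A_N, B_N) + d_\mu(B_N, L) \leq \epsilon_N + \epsilon_N + d_\mu(B_N, L)$. Given $\epsilon > 0$, first pick $N$ large enough that $\epsilon_N < \epsilon/4$ and also (using that $B_N \uparrow L$) that $d_\mu(B_N, L) < \epsilon/2$; then for all $n \geq N$ we have $d_\mu(A_n, L) \leq 2\epsilon_N + \epsilon/2 < \epsilon$. Hence $A_n \to L$.

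**Main obstacle.** The only subtle point is the bound $\mu(A_N \triangle B_N) \leq \epsilon_N$: one must be careful that $B_N$ is a countable intersection, so the naive union bound over infinitely many pairs $(A_N \setminus A_m)$ does not telescope — the fix is to pass through the finite intersections $\bigcap_{j=N}^m A_j$, use the clean finite telescoping inclusion $A_N \setminus \bigcap_{j=N}^m A_j \subseteq \bigcup_{j=N}^{m-1}(A_j \triangle A_{j+1})$, and only then take $m \to \infty$ using continuity of $\mu$ from above (legitimate since $\mu$ is finite). Everything else is routine triangle-inequality bookkeeping.
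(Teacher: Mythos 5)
Your proof is correct, but it follows a different route from the paper's. The paper argues pointwise: by Borel--Cantelli, almost every $x$ lies in only finitely many of the sets $A_n\bigtriangleup A_{n+1}$, so the sequence of memberships $\chi_{A_n}(x)$ stabilizes a.e.; one then picks $N$ and a set $X_0$ with $\mu(X_0)>1-\eps$ on which stabilization has occurred by time $N$, whence $A_n\bigtriangleup\liminf A_n\subseteq X\setminus X_0$ for $n\geq N$. You instead work purely with set inclusions and measure continuity: the telescoping bound $\mu(A_n\bigtriangleup A_m)\leq\sum_{k=n}^{m-1}\mu(A_k\bigtriangleup A_{k+1})$, the estimate $\mu(A_N\setminus B_N)\leq\sum_{k\geq N}\mu(A_k\bigtriangleup A_{k+1})$ obtained (correctly and carefully) via finite intersections, and a triangle inequality through $B_N=\bigcap_{n\geq N}A_n$. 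Both are sound; the paper's is shorter given Borel--Cantelli as a black box, while yours is more elementary and quantitative. Two small remarks: your $\eps_n$ is first displayed as a finite sum $\sum_{k=n}^{m-1}$ depending on $m$ but is clearly meant to be the tail $\sum_{k\geq n}\mu(A_k\bigtriangleup A_{k+1})$, so fix the notation; and your argument can be compressed, since the single inclusion $A_n\bigtriangleup\liminf A_k\subseteq\bigcup_{k\geq n}(A_k\bigtriangleup A_{k+1})$ (any point of the symmetric difference must switch membership at some stage $k\geq n$) already gives $\mu(A_n\bigtriangleup\liminf A_k)\leq\eps_n\to 0$ with no need for $B_N$ or the triangle inequality.
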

\begin{proof}
	By the Borel-Cantelli lemma, for almost all $x\in X$ there is $N$ such that for all $n\geq N$, $x\not\in A_n\bigtriangleup A_{n+1}$. Let $\epsilon>0$, we then find $N$ and a set $X_0$ such that $\mu(X_0)>1-\epsilon$ and for all $x\in X$ and all $n\geq N$, we have $x\not\in A_n\bigtriangleup A_{n+1}$. Now observe that for all $n\geq N$, we have $A_n\cap X_0=\liminf A_n\cap X_0$, in particular $A_n\bigtriangleup \liminf A_n\subseteq X\setminus X_0$ which has measure less than $\epsilon$ as wanted. 
\end{proof}

\begin{proposition}\label{prop: convergence}
	Let $(A_n)$ be a sequence of elements of $\MAlg(Y,\mu)$ which converges to some $A\in\MAlg(Y,\mu)$. 
	Then we can find a subsequence $(A_{n_k})$ such that 
	$$A=\liminf A_{n_k}=\bigcup_N\bigcap_{k\geq N} A_{n_k}$$
\end{proposition}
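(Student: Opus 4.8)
The plan is to extract from the convergent sequence $(A_n)$ a rapidly Cauchy subsequence and then apply Lemma \ref{lem: converging subsequence liminf}. Concretely, since $(A_n)$ converges to $A$, it is Cauchy in $d_\mu$, so for each $k\geq 1$ I can pick an index $n_k$, chosen strictly increasing in $k$, such that $\mu(A_m\bigtriangleup A_{m'})<2^{-k}$ for all $m,m'\geq n_k$. In particular $\mu(A_{n_k}\bigtriangleup A_{n_{k+1}})<2^{-k}$, so $\sum_k \mu(A_{n_k}\bigtriangleup A_{n_{k+1}})<+\infty$ and Lemma \ref{lem: converging subsequence liminf} applies to the sequence $(A_{n_k})_k$: it converges in $d_\mu$ to $\liminf_k A_{n_k}=\bigcup_N\bigcap_{k\geq N}A_{n_k}$.

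It remains to identify this limit with $A$. But $(A_{n_k})_k$ is a subsequence of $(A_n)$, which converges to $A$; hence $(A_{n_k})_k$ also converges to $A$. Since limits in the metric space $(\MAlg(Y,\mu),d_\mu)$ are unique, we conclude $A=\liminf_k A_{n_k}$, which is exactly the desired conclusion. Note that here I am using only that $d_\mu$ is a genuine metric (so limits are unique), not that it is complete — indeed completeness is precisely what this kind of argument is typically used to establish.

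There is really no serious obstacle: the only point requiring a little care is the bookkeeping in the choice of the subsequence, namely ensuring that the $n_k$ are strictly increasing while simultaneously controlling $\mu(A_{n_k}\bigtriangleup A_{n_{k+1}})$. This is handled by first fixing, for each $k$, a threshold index $N_k$ beyond which all pairwise distances are below $2^{-k}$, and then setting $n_k=\max(N_k,n_{k-1}+1)$ recursively; this keeps the sequence strictly increasing and still has $n_k\geq N_k$, which is all that is needed for the telescoping estimate. Everything else is a direct invocation of Lemma \ref{lem: converging subsequence liminf} and uniqueness of metric limits.
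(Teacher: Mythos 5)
Your proof is correct and follows essentially the same route as the paper's: extract a subsequence with $\mu(A_{n_k}\bigtriangleup A_{n_{k+1}})<2^{-k}$ from the Cauchy property, apply Lemma \ref{lem: converging subsequence liminf}, and identify the limit with $A$ by uniqueness of limits. Your treatment is simply more explicit about the bookkeeping of the indices and the final identification step.
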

\begin{proof}
	Since $(A_n)$ converges, it is Cauchy, in particular it admits a subsequence $(A_{n_k})_k$ such that for all $k\in\N$, $d_\mu(A_{n_k},A_{n_{k+1}})<\frac 1{2^k}$. We thus have by the previous lemma
	$A_{n_k}\to\liminf A_{n_k}=A$.
\end{proof}

\begin{proposition}\label{prop: dmu complete separable}
	The metric $d_\mu$ is complete, and $\MAlg(Y,\mu)$ is separable. 
\end{proposition}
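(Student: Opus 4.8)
The plan is to establish completeness and separability separately, both by reduction to standard facts.

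For completeness, I would start from a Cauchy sequence $(A_n)$ in $(\MAlg(Y,\mu), d_\mu)$. Pass to a subsequence $(A_{n_k})$ with $d_\mu(A_{n_k}, A_{n_{k+1}}) < 2^{-k}$, so that $\sum_k \mu(A_{n_k} \bigtriangleup A_{n_{k+1}}) < \infty$. By Lemma \ref{lem: converging subsequence liminf}, this subsequence converges (to $\liminf_k A_{n_k}$). A Cauchy sequence with a convergent subsequence converges to the same limit, so $(A_n)$ converges. This is the short route; the only thing to be careful about is invoking the standard ``Cauchy plus convergent subsequence implies convergent'' fact, which is purely metric-space bookkeeping.

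For separability, I would first reduce to the probability case: by Lemma \ref{lem: unif cont equiv meas} the topology does not depend on the choice of finite measure $\mu$ in the class of $\lambda$, so separability with respect to one such $\mu$ gives it for all. Now, since $(Y,\mu)$ is a standard $\sigma$-finite atomless space (isomorphic mod the choice of $\mu$ to a standard probability space), its $\sigma$-algebra is countably generated mod $\mu$-null sets: fix a countable Boolean algebra $\mathcal{A}_0$ of Borel sets generating the Borel $\sigma$-algebra (e.g.\ a countable base closed under finite Boolean operations over $\mathbb{Q}$-coefficients, or the dyadic algebra under an isomorphism with $[0,1]$). The claim is that $\mathcal{A}_0$ is $d_\mu$-dense in $\MAlg(Y,\mu)$. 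This follows from the approximation theorem for generated $\sigma$-algebras: the collection of Borel sets $B$ that can be $d_\mu$-approximated arbitrarily well by elements of $\mathcal{A}_0$ is a $\sigma$-algebra (it is closed under complements trivially, and closed under countable unions because a countable union can be truncated to a finite union up to small measure, then each term approximated) containing $\mathcal{A}_0$, hence equals the full Borel $\sigma$-algebra. Since $\mathcal{A}_0$ is countable, $\MAlg(Y,\mu)$ is separable.

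The main obstacle here is essentially presentational rather than mathematical: one must be slightly careful that ``standard $\sigma$-finite atomless'' really does give a countably generated $\sigma$-algebra mod null sets, which is where the classification of such spaces (stated earlier in the subsection) is used — after choosing a finite $\mu$ one may transport everything to $[0,1]$ with Lebesgue measure and use the dyadic intervals. Everything else is the routine monotone-class / $\sigma$-algebra-of-good-sets argument.
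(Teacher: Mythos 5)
Your proof is correct and follows essentially the same route as the paper: completeness via a fast-Cauchy subsequence and Lemma \ref{lem: converging subsequence liminf}, and separability via the standardness of $(Y,\mu)$ producing a countable dense family. The only cosmetic difference is in the separability step, where the paper transports to an interval and invokes regularity of Lebesgue measure to approximate by finite unions of rational intervals, while you run the equivalent good-sets argument over a countable generating Boolean algebra; both are routine and valid.
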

\begin{proof}
	Let $(A_n)$ be a Cauchy sequence, it suffices to show that it has a convergent subsequence. But because the sequence is Cauchy we may find a subsequence $(A_{n_k)})_k$ such that for all $k\in\N$, $\mu(A_{n_k}\bigtriangleup A_{n_{k+1}})<2^{-k}$. Then $\sum_k\mu(A_{n_k}\bigtriangleup A_{n_{k+1}})<+\infty$ and we get the desired result by applying the previous lemma.
	
	Towards proving separability, first note that since $(Y,\mu)$ is standard, we may as well assume it is an interval of finite length endowed with the Lebesgue measure. The regularity of the Lebesgue measure can then be used to show that finite unions of open intervals with rational endpoints are dense in the measure algebra, thus yielding the desired countable dense subset.
\end{proof}

\subsection{The space of subequivalence relations}\label{sec: space of subeq}

Given an action of a countable group $\Gamma$ by Borel bijections on a standard Borel space $X$, we obtain the associated equivalence relation $\mathcal R_\Gamma=\{(x,\gamma x)\colon x\in X, \gamma\in\Gamma\}$ whose equivalence classes are exactly the $\Gamma$-orbits.
This equivalence relation is a Borel subset of $X\times X$, and its classes are countable: $\mathcal R_\Gamma$ is by definition a \textbf{countable Borel equivalence relation} (\textbf{CBER}, which can conveniently be read as “seeber").
Conversely, the Feldman-Moore theorem ensures us that every CBER comes from a Borel action of a countable group, see for instance \cite[Thm.\ 1.3]{kechrisTopicsOrbitEquivalence2004}. 

Given such an equivalence relation, define its \textbf{Borel pseudo full group} as the set of all partial Borel bijection $\varphi: \dom\varphi\subseteq X\to \rng\varphi\subseteq X$ such that for all $x\in A$, we have $(x,\varphi(x))\in\mathcal R$. By definition, the \textbf{Borel full group} is the group made of elements of the Borel pseudo full group whose domain and range is equal to $X$.

If $(X,\mu)$ is now a standard probability space, we say that a CBER is \textbf{non-singular} (or that $\mu$ is quasi-invariant, or that $\mathcal R$ is null-preserving) if for all $\varphi$ in its Borel pseudo full group,
we have $\mu(\dom\varphi)=0$ iff $\mu(\rng \varphi)=0$.
The main (and only by the Feldman-Moore theorem) source of such equivalence relations is the following:
\begin{definition}
An action of a countable group $\Gamma$ by Borel bijections on $X$ is called \textbf{non-singular} (or null-preserving, or quasi-preserving $\mu$) if for all $A\subseteq X$ Borel, we have $\mu(A)=0$ iff $\mu(\gamma A)=0$.
\end{definition}

\begin{remark}
	By Lemma \ref{lem: unif cont equiv meas}, non-singular actions yield action by uniformly continuous homeomorphisms on the measure algebra $\MAlg(X,\mu)$.
\end{remark}

Given $\Gamma\act (X,\mu)$ non-singular the associated CBER $\mathcal R_\Gamma$ is non-singular, and conversely any non-singular CBER arises in this manner (see \cite[Prop.\ 8.1]{kechrisTopicsOrbitEquivalence2004}).

Now let $\mathcal R$ be a non-singular CBER.
We equip $\mathcal R$ with the left measure $M$ defined by 
$$M(A)=\int_X\abs{A_x}d\mu(x)$$
for all $A\subseteq \mathcal R$ Borel. Let $m$ be an equivalent probability measure, then the measure algebra of $\mathcal R$ with respect to $M$ is equal to that of $\mathcal R$ with respect to $m$.
This measure is $\sigma$-finite since $\mathcal R$ can be covered by the graphs of the elements of a countable group $\Gamma$ such that $\mathcal R=\mathcal R_\Gamma$ (each such graph has measure $1$).

It then follows from Proposition \ref{prop: dmu complete separable}  that  $\MAlg(\mathcal R,M)$ is a Polish space. Moreover by Lemma \ref{lem: unif cont equiv meas} the topology (and actually the uniform structure) do not depend on the choice of $m$. Since the measure $m$ is not canonical, we will most of the time write our topological space as $\MAlg(\mathcal R,M)$. 

A Borel subset of $\mathcal R$ is called a Borel subequivalence relation if it is an equivalence relation on $X$. 
An element of $\MAlg(\mathcal R,M)$ is called a subequivalence relation if its equivalence class \emph{contains} an equivalence relation. Denote by $\Sub(\mathcal R)\subseteq \MAlg(\mathcal R,M)$ the space of subequivalence relations of $\mathcal R$. We can now easily prove the following result of Kechris (our proof is moreover motivated by \cite[Thm.\ 5.1]{kechrisSpaceMeasurepreservingEquivalence}, which we have essentially recast as Proposition \ref{prop: convergence}).

\begin{theorem}[{Kechris, \cite[Sec.\ 4.4.(1)]{kechrisSpaceMeasurepreservingEquivalence}}]\label{thm: sub R is Polish}
	The space $\Sub(\mathcal R)$ of subequivalence relations of $\mathcal R$ is a closed subset of $\MAlg(\mathcal R,M)$, hence Polish.
\end{theorem}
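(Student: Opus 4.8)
The goal is to show $\Sub(\mathcal R)$ is closed in $\MAlg(\mathcal R, M)$. The natural strategy is to take a sequence $(\mathcal S_n)$ in $\Sub(\mathcal R)$ converging to some $\mathcal T \in \MAlg(\mathcal R, M)$ and prove that $\mathcal T \in \Sub(\mathcal R)$, i.e.\ that the $M$-class of $\mathcal T$ contains an honest equivalence relation on $X$. The key tool is Proposition \ref{prop: convergence}: after passing to a subsequence, we may assume $\mathcal T = \liminf_k \mathcal S_{n_k} = \bigcup_N \bigcap_{k \geq N} \mathcal S_{n_k}$, and each $\mathcal S_{n_k}$ may be chosen to be an actual Borel equivalence relation on $X$ (a genuine representative of its class). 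So the heart of the matter is: a $\liminf$ of Borel equivalence relations on $X$ is again a Borel equivalence relation on $X$.

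First I would verify reflexivity: since each $\mathcal S_{n_k} \supseteq \Delta_X$, the intersection $\bigcap_{k \geq N}\mathcal S_{n_k}$ contains $\Delta_X$ for every $N$, hence so does the union $\mathcal T$; it is also clearly a Borel subset of $\mathcal R$. Symmetry is equally immediate: each $\mathcal S_{n_k}$ is symmetric, an arbitrary intersection of symmetric sets is symmetric, an arbitrary union of symmetric sets is symmetric, so $\mathcal T$ is symmetric. The only delicate axiom is transitivity. Suppose $(x,y), (y,z) \in \mathcal T$. By definition of $\mathcal T$ as a $\liminf$, there are $N_1, N_2$ with $(x,y) \in \mathcal S_{n_k}$ for all $k \geq N_1$ and $(y,z) \in \mathcal S_{n_k}$ for all $k \geq N_2$. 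Taking $N = \max(N_1, N_2)$, both pairs lie in $\mathcal S_{n_k}$ for all $k \geq N$, and since each $\mathcal S_{n_k}$ is transitive, $(x,z) \in \mathcal S_{n_k}$ for all $k \geq N$, whence $(x,z) \in \bigcap_{k \geq N}\mathcal S_{n_k} \subseteq \mathcal T$. This is the one point where the $\liminf$ (rather than, say, the measure-algebra limit directly) is essential — it converts the pointwise eventual membership into the transitivity argument cleanly. I expect this transitivity step to be the main obstacle, though as outlined it is short; the subtlety is purely that it genuinely requires working with the concrete $\liminf$ representative rather than with an abstract limit in the measure algebra, which is exactly why Proposition \ref{prop: convergence} is invoked.

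Finally I would assemble the pieces: $\mathcal T$, as represented by $\liminf_k \mathcal S_{n_k}$, is a Borel subset of $X \times X$ contained in $\mathcal R$ which is reflexive, symmetric and transitive, hence a Borel subequivalence relation of $\mathcal R$; and its $M$-class is the limit of $(\mathcal S_n)$ by Proposition \ref{prop: convergence}. Therefore the limit point $\mathcal T$ lies in $\Sub(\mathcal R)$, so $\Sub(\mathcal R)$ is closed. Since a closed subset of a Polish space is Polish (this uses Proposition \ref{prop: dmu complete separable}, which gives that $\MAlg(\mathcal R, M)$ is Polish), the proof is complete. One small remark worth including: in passing to the subsequence we should be slightly careful that we may simultaneously arrange both the summability condition $\sum_k M(\mathcal S_{n_k} \bigtriangleup \mathcal S_{n_{k+1}}) < +\infty$ needed for Proposition \ref{prop: convergence} and the choice of genuine equivalence-relation representatives; this is routine since each class in $\Sub(\mathcal R)$ contains such a representative by definition and the summability is a property of the classes alone.
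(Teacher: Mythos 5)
Your proof is correct and follows exactly the paper's argument: the paper's own proof is the one-line observation that the $\liminf$ of a sequence of equivalence relations is an equivalence relation, combined with Proposition \ref{prop: convergence}, and your write-up simply fills in the (routine) verification of reflexivity, symmetry and transitivity for the $\liminf$ representative. Nothing to add.
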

\begin{proof}
	Observe that the $\liminf$ of any sequence of equivalence relations is an equivalence relation. By Proposition \ref{prop: convergence} the result follows.
\end{proof}

Observe that we did not even use that $\mathcal R$ was non-singular in the above. However,  we need non-singularity for our space to encode the right notion of equality of subequivalence relations as follows. 

\begin{proposition}
	Let $\mathcal S_1$, $\mathcal S_2$ be two Borel subequivalence relations of a non-singular CBER $\mathcal R$, suppose $M(\mathcal S_1\bigtriangleup \mathcal S_2)=0$. Then there is a full measure $\mathcal S_1$ and $\mathcal S_2$-invariant set onto which $\mathcal S_1$ and $\mathcal S_2$ are the same. 
\end{proposition}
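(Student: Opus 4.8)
The plan is to unwind the definition of the left measure $M$ so as to translate the hypothesis into a statement about fibers, and then to use non-singularity of $\mathcal R$ to replace the resulting conull set by a conull \emph{invariant} one. Concretely, using $M(A)=\int_X\abs{A_x}\,d\mu(x)$, the hypothesis reads $0=M(\mathcal S_1\bigtriangleup\mathcal S_2)=\int_X\abs{(\mathcal S_1\bigtriangleup\mathcal S_2)_x}\,d\mu(x)$, so for $\mu$-almost every $x\in X$ the fibers agree: $(\mathcal S_1)_x=(\mathcal S_2)_x$, i.e.\ the $\mathcal S_1$-class and the $\mathcal S_2$-class of $x$ coincide. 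I would let $N$ be the set of $x$ where this fails; since $\mathcal S_1\bigtriangleup\mathcal S_2$ is a Borel subset of $\mathcal R$ with countable vertical sections, $N$ is its projection to $X$ and hence a $\mu$-null Borel set by the Lusin--Novikov theorem.

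The main point is then to saturate $N$. Picking a countable group $\Gamma$ with $\mathcal R=\mathcal R_\Gamma$ (Feldman--Moore), the $\mathcal R$-saturation $[N]_{\mathcal R}=\bigcup_{\gamma\in\Gamma}\gamma N$ is a countable union of $\mu$-null sets, each $\gamma N$ being null precisely because the $\Gamma$-action is non-singular; thus $[N]_{\mathcal R}$ is a $\mu$-null Borel set. Set $Y=X\setminus[N]_{\mathcal R}$: it is conull, and being $\mathcal R$-invariant it is in particular $\mathcal S_1$- and $\mathcal S_2$-invariant because $\mathcal S_1,\mathcal S_2\subseteq\mathcal R$. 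Finally, for $x\in Y$ we have $x\notin N$, hence $(\mathcal S_1)_x=(\mathcal S_2)_x$, which immediately gives $(x,y)\in\mathcal S_1\Leftrightarrow(x,y)\in\mathcal S_2$ for every $y$; restricting to pairs in $Y\times Y$ shows that $\mathcal S_1$ and $\mathcal S_2$ agree on $Y$, which is what we want.

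I do not expect a genuine obstacle: the only thing requiring care is that an $M$-null symmetric difference yields a $\mu$-conull set of ``good'' base points, and that this set can be upgraded to an invariant conull set; this is precisely the role played by non-singularity of $\mathcal R$, as flagged in the discussion preceding this proposition. If one would rather not pass to all of $\mathcal R$, one can instead saturate $N$ only under the sub-CBER generated by $\mathcal S_1$ and $\mathcal S_2$, which is still non-singular as a subrelation of $\mathcal R$; this is marginally more economical but not necessary.
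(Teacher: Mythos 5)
Your proof is correct and follows essentially the same route as the paper: deduce from $M(\mathcal S_1\bigtriangleup\mathcal S_2)=0$ that the classes agree for $\mu$-almost every base point, then use non-singularity to saturate the null exceptional set into an invariant conull complement. The only cosmetic difference is that you saturate under all of $\mathcal R$ while the paper saturates only under $\mathcal S_1$ (via a Feldman--Moore group for $\mathcal S_1$), a variant you yourself flag at the end.
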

\begin{proof}
	By definition for almost all $x$ we have $[x]_{\mathcal S_1}=[x]_{\mathcal S_2}$. Since $\mathcal S_1$ is non singular we find a smaller $\mathcal S_1$-invariant set $X_0$ for which the same conclusion holds (if $\mathcal S_1=\mathcal R_\Gamma$ we let $\displaystyle X_0=\bigcap_{\gamma\in \Gamma} \gamma X_1$, where $X_1=\{ x\in X\colon [x]_{\mathcal S_1}=[x]_{\mathcal S_2}\}$). This finishes the proof. 
\end{proof}

We will also see in the next section that non-singularity is important if we want to concretely see our space of subequivalence relations as a closed set, i.e. to say that the axioms of subequivalence relations each define closed sets. Let us conclude this section by pointing out yet another way of understanding the topology which was pointed out to us by Stefaan Vaes.

\begin{remark}
	Given a non-singular equivalence relation $\mathcal R$ on $(X,\mu)$, consider the space of subalgebras of its von Neumann algebra $L\mathcal R$, which is Polish when endowed with the Maréchal topology (\cite{marechalTopologieStructureBorelienne1973}, see also \cite{fimaMichaelsSelectionTheorem2024} for a detailed proof).
	Then it is a result of Aoi that subequivalence relations of $\mathcal R$ are in one-to-one correspondence
	with subalgebras of $L\mathcal R$ containing $\LL^\infty(X,\mu)$ \cite{aoiConstructionEquivalenceSubrelations2003}.
	It is not hard to see that such subalgebras form a closed set, and the map which associates to $\mathcal S\in\Sub(\mathcal R)$ the subalgebra $L\mathcal S$ of $L\mathcal R$ is a homeomorphism 
	(see Theorem E from the recent preprint by Shuoxing Zhou \cite{zhouNoncommutativeTopologicalBoundaries2024}), yielding yet another way of understanding the Polish topology
	induced by the measure algebra of $\mathcal R$ on $\Sub(\mathcal R)$.
	For (many!) other interpretations of this topology in the probability measure preserving setup, the reader should consult \cite{kechrisSpaceMeasurepreservingEquivalence}.
\end{remark}

\section{Strong and lower topologies}\label{sec: strong and low}

In this section, we first study the strong topology on the space of subequivalence relations as defined in \cite{kechrisSpaceMeasurepreservingEquivalence}, which will be useful towards establishing our orbit density result (Theorem \ref{thmi: dense orbit}). 
We then use a lower topology on the measure algebra so as to obtain a natural proof of the fact that the space of subequivalence relations is closed in the measure algebra, and of the fact that the map which takes a subgraph to the equivalence it generates is Baire class one \cite[Prop.\ 19.1]{kechrisSpaceMeasurepreservingEquivalence}.

\subsection{Topologies on measure algebras of \texorpdfstring{$\sigma$}{sigma}-finite spaces}

We first go back to measure algebras of $\sigma$-finite spaces, which we now write as $(Y,M)$, motivated by our previous example where $Y$ is a non-singular CBER.

\begin{proposition}\label{prop: exhaustion by finite and topology}
	Let $(Y,M)$ be a standard $\sigma$-finite space. 
	Let $(Y_n)$ be a countable family of finite measure subsets of $Y$ such that $\bigcup_nY_n=Y$. Then the map $\Phi:\MAlg(Y,M)\to\prod_n \MAlg(Y_n,M_{Y_n})$ which takes $A$ to the sequence $(A\cap Y_n)$ is a homeomorphism onto its image. 
\end{proposition}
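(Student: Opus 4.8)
The plan is to verify the four things making up ``homeomorphism onto its image'': $\Phi$ is well defined, injective, continuous, and has continuous inverse on $\Phi(\MAlg(Y,M))$. Fix once and for all a finite measure $\mu$ in the class of $M$, metrize $\MAlg(Y,M)$ by $d_\mu(A,B)=\mu(A\bigtriangleup B)$, metrize each factor $\MAlg(Y_n,M_{Y_n})$ by the restriction $\mu_n:=\mu_{\restriction Y_n}$ (which is a finite measure), and metrize the countable product $\prod_n\MAlg(Y_n,M_{Y_n})$ by $d\big((A_n),(B_n)\big)=\sum_n 2^{-n}\min\!\big(1,d_{\mu_n}(A_n,B_n)\big)$, so that convergence in the product means convergence in each coordinate. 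By Lemma \ref{lem: unif cont equiv meas} none of these choices affects the topologies up to uniform equivalence.

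Well-definedness and continuity will follow at once from the identity $(A\bigtriangleup B)\cap Y_n=(A\cap Y_n)\bigtriangleup(B\cap Y_n)$: it gives $\mu_n\big((A\cap Y_n)\bigtriangleup(B\cap Y_n)\big)\leq\mu(A\bigtriangleup B)$, so $\Phi$ respects the null ideal and each of its coordinate maps is $1$-Lipschitz, hence $\Phi$ is continuous into the product. For injectivity, if $A\cap Y_n=B\cap Y_n$ in $\MAlg(Y_n,M_{Y_n})$ for every $n$, then, using $\bigcup_nY_n=Y$, $\mu(A\bigtriangleup B)\leq\sum_n\mu\big((A\bigtriangleup B)\cap Y_n\big)=0$, so $A=B$.

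The remaining point is continuity of $\Phi^{-1}$ on the image; since all the spaces are metrizable it suffices to argue sequentially. Suppose $\Phi(A^{(k)})\to\Phi(A)$, i.e. $\mu_n\big((A^{(k)}\cap Y_n)\bigtriangleup(A\cap Y_n)\big)\to 0$ for each fixed $n$. Given $\eps>0$, set $Z_N=\bigcup_{n\leq N}Y_n$; then $Z_N\uparrow Y$ and $\mu$ is finite, so $\mu(Y\setminus Z_N)\to 0$ and we may fix $N$ with $\mu(Y\setminus Z_N)<\eps/2$. Since $(A^{(k)}\bigtriangleup A)\cap Z_N\subseteq\bigcup_{n\leq N}\big((A^{(k)}\bigtriangleup A)\cap Y_n\big)$,
\begin{align*}
	\mu(A^{(k)}\bigtriangleup A) &\leq \mu(Y\setminus Z_N)+\sum_{n\leq N}\mu_n\big((A^{(k)}\cap Y_n)\bigtriangleup(A\cap Y_n)\big)\\
	&< \frac{\eps}{2}+\sum_{n\leq N}\mu_n\big((A^{(k)}\cap Y_n)\bigtriangleup(A\cap Y_n)\big),
\end{align*}
and the finite sum tends to $0$ as $k\to\infty$, so $\mu(A^{(k)}\bigtriangleup A)<\eps$ for all large $k$. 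Hence $A^{(k)}\to A$, proving $\Phi^{-1}$ continuous.

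There is no real obstacle here; the only two points deserving care are that the $Y_n$ are not assumed disjoint or nested — dealt with by passing to the partial unions $Z_N\uparrow Y$ — and that one should fix a single finite measure $\mu$ in the class of $M$ and use its restrictions on the factors, so that the coordinate projections of $\Phi$ are genuinely $1$-Lipschitz; Lemma \ref{lem: unif cont equiv meas} ensures the resulting topologies are the intrinsic ones, and that the image may be a proper subset of the product is harmless since we only claim a homeomorphism onto it.
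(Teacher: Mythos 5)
Your proof is correct. It differs from the paper's in one respect worth noting: the paper chooses the finite measure in the class of $M$ \emph{adapted to the exhaustion}, namely $m(A):=\sum_n \frac{1}{2^{n+1}M(Y_n)}M(A\cap Y_n)$, and endows the product with the matching metric $\tilde d((A_n),(B_n))=\sum_n \frac{1}{2^{n+1}M(Y_n)}M(A_n\bigtriangleup B_n)$, so that $\Phi$ becomes a literal isometry and there is nothing left to verify. You instead fix an arbitrary finite measure $\mu$ equivalent to $M$ and prove the two directions separately: the coordinate maps are $1$-Lipschitz via $(A\bigtriangleup B)\cap Y_n=(A\cap Y_n)\bigtriangleup(B\cap Y_n)$, and continuity of the inverse comes from a tail estimate on the partial unions $Z_N=\bigcup_{n\leq N}Y_n$ using finiteness of $\mu$. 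Both arguments are sound and rest on the same two facts (finiteness of the reference measure and $\bigcup_n Y_n=Y$); the paper's choice of $m$ buys a one-line verification, while your version has the advantage of working uniformly with whichever finite measure one has already fixed, at the cost of a routine $\varepsilon$-argument. Your closing remarks correctly identify the only delicate points (the $Y_n$ need not be disjoint, and the image need not be all of the product).
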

\begin{proof}
	Observe that the probability measure 
	$$m(A):=\sum_n \frac 1{2^{n+1}M(Y_n)}M(A\cap Y_n)$$
	is equivalent to $M$ and thus the associated metric $d_m$ induces the topology of $\MAlg(Y,M)$. 	
	Now the space $\prod_n \MAlg(Y_n,M_{Y_n})$ can be endowed with a compatible metric $\tilde d$ given by 
	$$\tilde d((A_n),(B_n))=\sum_n \frac 1{2^{n+1}M(Y_n)}M(A_n\bigtriangleup B_n),$$
	It is then straightforward to check that $\Phi$ is an isometry for the metric $d_m$ on its domain and $\tilde d$ on its range.	
\end{proof}

We now go on to obtain a more precise version of this result, following Kechris' approach to the \emph{strong topology} on $\Sub(\mathcal R)$.

Observe that every element $\varphi$ of the Borel pseudo full group of a CBER $\mathcal R$ is completely determined by its \textbf{graph}, namely the set $\{(x,\varphi(x))\colon x\in \dom\varphi\}$, which is a Borel subset of $\mathcal R$.
From now on, we identify elements of the Borel pseudo full group to their graph. Doing this up to measure zero when $\mathcal R$ is nonsingular, we arrive at the following definition.

\begin{definition}
	The \textbf{pseudo full group} of a non-singular equivalence relation $\mathcal R$ on $(X,\mu)$ is the quotient of the Borel pseudo full group by the equivalence relation which identifies $\varphi_1$, $\varphi_2$ when $M(\varphi_1\bigtriangleup \varphi_2)=0$. 
	It is denoted by $[[\mathcal R]]$, and for ease of notation we still write its elements as $\varphi$'s.
	Finally the \textbf{full group} of $\mathcal R$ denotes all the elements of $[[\mathcal R]]$ with full measure domain and range.
\end{definition}

It is a direct consequence of the definition of $M$ that $M(\varphi_1\bigtriangleup \varphi_2)=0$ iff $\mu(\dom\varphi_1\bigtriangleup\dom\varphi_2)=0$ and for almost all $x\in \dom\varphi_1\cap\dom\varphi_2$, we have $\varphi_1(x)=\varphi_2(x)$. 
The Borel pseudo full group is stable under countable increasing unions and arbitrary countable intersections, so it is stable under taking $\liminf$. Following the same approach as for the proof of Theorem \ref{thm: sub R is Polish}, one obtains that $[[\mathcal R]]$ is a closed subset of $\MAlg(\mathcal R,M)$, in particular it is Polish. Note however that $[\mathcal R]$ is not closed for this topology (see Example \ref{ex: full group leaving}).

We now make a slight modification of Kechris' uniquely generating sequences of involutions.
Let us say that an element $\varphi$ of the Borel pseudo-full group is a \textbf{moving partial  involution} if
$\dom\varphi=\rng\varphi$ and for all $x\in\dom\varphi$, we both have $\varphi(x)\neq x$ and $\varphi(\varphi(x))=x$.

\begin{definition}
	A sequence $(\varphi_n)_{n\in\N}$ of moving partial involutions of the Borel pseudo-full group of a CBER $\mathcal R$  is called a \textbf{uniquely generating sequence of moving partial involutions} if 
	\[
	\mathcal R\setminus \Delta_X=\bigsqcup_{n\in\N} \varphi_n,
	\]
	where $\Delta_X=\{(x,x)\colon x\in X\}$ is the equality relation.
\end{definition}

\begin{proposition}\label{prop: unique moving invol}
	Every CBER $\mathcal R$ admits a uniquely generating sequence of moving partial involutions.
\end{proposition}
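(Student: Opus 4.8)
The plan is to construct the sequence of moving partial involutions directly from a Feldman--Moore presentation of $\mathcal R$, taking care to make the pieces genuinely involutive, fixed-point free, and pairwise disjoint.

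\begin{proof}
By the Feldman--Moore theorem, write $\mathcal R=\mathcal R_\Gamma$ for some countable group $\Gamma=\{\gamma_n\colon n\in\N\}$ acting on $X$ by Borel bijections, with $\gamma_0=\id$. Then $\mathcal R\setminus\Delta_X=\bigcup_{n\geq 1}\{(x,\gamma_n x)\colon \gamma_n x\neq x\}$. We first symmetrize and disjointify: for $n\geq 1$ let $G_n=\{(x,\gamma_n x)\colon \gamma_n x\neq x\}\cup\{(\gamma_n x,x)\colon \gamma_n x\neq x\}$, which is a symmetric Borel subset of $\mathcal R\setminus\Delta_X$ not meeting the diagonal, and note $\bigcup_{n\geq 1}G_n=\mathcal R\setminus\Delta_X$. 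Setting $H_n=G_n\setminus\bigcup_{k<n}G_k$ we obtain pairwise disjoint symmetric Borel subsets of $\mathcal R\setminus\Delta_X$, avoiding the diagonal, whose union is $\mathcal R\setminus\Delta_X$. Each $H_n$ is the graph of a Borel partial bijection $\psi_n$ of $X$ with $\dom\psi_n=\rng\psi_n$ and $\psi_n(x)\neq x$ for all $x\in\dom\psi_n$; however $\psi_n$ need not be an involution, i.e.\ $H_n$ need not be symmetric as a \emph{function graph} relation in the stronger sense that $\psi_n^2=\id$. The point is that $H_n$ is symmetric as a subset of $X\times X$, but the associated partial map sends $x\mapsto$ some $y$ with $(y,x)\in H_n$, and $\psi_n(y)$ could differ from $x$.

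To fix this, we further subdivide each $H_n$. Fix a Borel linear order $\leq$ on $X$ (e.g.\ transport the usual order from $[0,1]$). For $x\in\dom\psi_n$, the fiber $\{y\colon (x,y)\in H_n\}$ is finite and nonempty; let $f_n(x)$ be its $\leq$-minimum. Then $f_n$ is a Borel map, and consider the Borel set $D_n=\{x\in\dom\psi_n\colon f_n(f_n(x))=x\}$ together with $D_n'=\dom\psi_n\setminus D_n$. On $D_n$ the map $f_n$ restricts to a moving partial involution $\varphi_n^0$ (it is fixed-point free since $(x,x)\notin H_n$, and $f_n\circ f_n=\id$ there). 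On $D_n'$ we recover the remaining edges: for $x\in D_n'$ and any $y$ with $(x,y)\in H_n$, the pair $\{x,y\}$ with $y\neq f_n(x)$ or $x\neq f_n(y)$ must be handled; since $H_n$ is symmetric, write $H_n\restriction D_n'$ (meaning the set of $(x,y)\in H_n$ with $x\in D_n'$ or $y\in D_n'$) as a Borel graph of finite valence and apply a Borel edge-colouring (finite valence Borel graphs have Borel proper edge colourings by $\N$, by a standard greedy argument) to split it into countably many Borel matchings $\{M_{n,j}\}_j$; each matching, being symmetric and fixed-point free, \emph{is} the graph of a moving partial involution $\varphi_{n,j}$.

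Collecting the countably many moving partial involutions $\{\varphi_n^0\}_n\cup\{\varphi_{n,j}\}_{n,j}$, we obtain, after a relabelling by a single index, a sequence $(\varphi_m)_m$ of moving partial involutions. By construction their graphs are pairwise disjoint (they live inside the pairwise disjoint $H_n$, and within each $H_n$ the diagonal piece $D_n$ and the distinct colour classes are disjoint) and their union is all of $\bigcup_n H_n=\mathcal R\setminus\Delta_X$. Hence $\mathcal R\setminus\Delta_X=\bigsqcup_m\varphi_m$, so $(\varphi_m)$ is a uniquely generating sequence of moving partial involutions.
\end{proof}

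The main obstacle is the edge-colouring step: turning the symmetric-as-a-set relation $H_n$ into an honest countable disjoint union of involutive (fixed-point free, self-inverse) partial bijections. The naive map $x\mapsto f_n(x)$ fails to be an involution exactly on the set where the $\leq$-minimum of the fiber is not reciprocated, and controlling that set requires the Borel edge-colouring of a finite-valence Borel graph; everything else (Feldman--Moore, symmetrization, disjointification) is routine bookkeeping.
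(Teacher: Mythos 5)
Your overall strategy (Feldman--Moore presentation, symmetrize, disjointify, then decompose each symmetric piece into Borel matchings via a Borel edge colouring) is viable and quite different from the paper's proof, which is a one-liner: it takes a uniquely generating sequence of involutions $(T_n)$ from Kechris' monograph and lets $\varphi_n$ be the restriction of $T_n$ to its support. But as written your argument has a genuine coverage gap in the step where you split $H_n$. The edges of $H_n$ not covered by $\varphi_n^0=f_{n\restriction D_n}$ are \emph{not} all contained in your set $H_n\restriction D_n'$: an edge $(x,y)\in H_n$ can have both endpoints in $D_n$ while $y\neq f_n(x)$ and $x\neq f_n(y)$, and such an edge then lies in neither the graph of $\varphi_n^0$ nor $H_n\restriction D_n'$, so it is never assigned to any $\varphi_m$ and $\bigsqcup_m\varphi_m$ falls short of $\mathcal R\setminus\Delta_X$. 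Concretely, take a $\gamma_n$-orbit segment $a\to b\to c\to d$, so the $H_n$-edges are $\{a,b\},\{b,c\},\{c,d\}$, and suppose the Borel order satisfies $a<d<c<b$. Then $f_n(a)=b$, $f_n(b)=a$, $f_n(c)=d$, $f_n(d)=c$, hence all four points lie in $D_n$ and $\varphi_n^0$ matches $a\leftrightarrow b$ and $c\leftrightarrow d$, but the edge $\{b,c\}$ is covered by neither piece and $D_n'$ is empty there.

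The fix is easy and in fact makes the $f_n$/$D_n$ device superfluous: each $H_n$ is a symmetric Borel subset of $\mathcal R\setminus\Delta_X$ all of whose fibers have cardinality at most $2$ (they are contained in $\{\gamma_nx,\gamma_n^{-1}x\}$), i.e.\ a Borel graph of degree at most $2$, so a Borel proper edge colouring of all of $H_n$ (apply the Kechris--Solecki--Todorcevic colouring theorem to the line graph, or argue greedily using bounded degree) decomposes $H_n$ into countably many Borel matchings, each of which is already the graph of a moving partial involution; relabelling these over all $n$ gives the desired sequence. Two smaller slips to correct in the same spirit: $H_n$ is generally \emph{not} the graph of a single partial bijection $\psi_n$, precisely because fibers can have two points; and the clause introducing ``the pair $\{x,y\}$ with $y\neq f_n(x)$ or $x\neq f_n(y)$'' describes the right set of leftover edges, but that set is not the $H_n\restriction D_n'$ you then colour, which is where the mismatch hides.
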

\begin{proof}
	Let $(T_n)_{n\in\N}$ be a uniquely generating sequence of involutions for $\mathcal R$ as in \cite[Prop.\ 4.6]{kechrisSpaceMeasurepreservingEquivalence}, and for every $n\in\N$ let $\varphi_n$ be the restriction of $T_n$ to its support. 
\end{proof}
\begin{remark}
	The disjointness of the graphs of the $\varphi_n$'s simplifies a bit the arguments we carry out in this paper since it guarantees that all the $\varphi_i(x)$ are distinct whenever they are defined (see in particular the proof of \eqref{item: infinite index}$\Rightarrow$\ref{item: fullgroup orbit} in Theorem \ref{thm: infinite index everywhere characterization}). The disjointness also allows us to upgrade Proposition \ref{prop: exhaustion by finite and topology} and obtain that the map 
	\[
	\MAlg(\mathcal R,M)\to \MAlg(\Delta_X,M_{\Delta_X})\times \prod_{n\in\N} \MAlg(\varphi_n, M_{\varphi_n}))
	\]
	which takes $A$ to the sequence $(A\cap Y_n)$ is not only a homeomorphism onto its image, but actually surjective.
\end{remark}
The following corollary is a slight reformulation of \cite[Thm.\ 4.13]{kechrisSpaceMeasurepreservingEquivalence} which identifies the strong and the weak topology on the space of subequivalence relations.  
\begin{corollary}\label{cor: strong topology}
	Let $\mathcal R$ be a non-singular equivalence relation, let $(\varphi_n)$ be a uniquely generating sequence of moving partial involutions. Then the map 
		\[
	\Sub(\mathcal R)\to \prod_{n\in\N} \MAlg(\varphi_n, M_{\varphi_n})
	\]
	which takes $\mathcal S$ to the sequence $(\mathcal S\cap \varphi_n)_{n\in\N}$ is a homeomorphism onto its image. 
\end{corollary}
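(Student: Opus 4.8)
The plan is to deduce this from Proposition \ref{prop: exhaustion by finite and topology} applied to the $\sigma$-finite space $(\mathcal R, M)$ together with the fact that $\Sub(\mathcal R)$ lives inside the part of the measure algebra supported off the diagonal. First I would take a uniquely generating sequence of moving partial involutions $(\varphi_n)$ as provided by Proposition \ref{prop: unique moving invol}, so that $\mathcal R\setminus\Delta_X=\bigsqcup_n\varphi_n$ as a disjoint union of Borel subsets of $\mathcal R$ of $M$-measure one each, and hence $\{\Delta_X\}\cup\{\varphi_n:n\in\N\}$ is a countable cover of $\mathcal R$ by finite-measure subsets. Proposition \ref{prop: exhaustion by finite and topology} then says the map $\Phi$ sending $A$ to $(A\cap\Delta_X,(A\cap\varphi_n)_n)$ is a homeomorphism of $\MAlg(\mathcal R,M)$ onto its image in $\MAlg(\Delta_X,M_{\Delta_X})\times\prod_n\MAlg(\varphi_n,M_{\varphi_n})$ (this is the upgraded version recorded in the Remark after Proposition \ref{prop: unique moving invol}, but we only need that it is a homeomorphism onto its image).

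Next I would restrict this homeomorphism to the closed subset $\Sub(\mathcal R)\subseteq\MAlg(\mathcal R,M)$. For $\mathcal S\in\Sub(\mathcal R)$, since $\Delta_X\subseteq\mathcal S$ up to measure zero (every equivalence relation contains the diagonal, and $M(\Delta_X)=1<\infty$), the first coordinate $\mathcal S\cap\Delta_X$ is always equal to $\Delta_X$, hence constant. Therefore $\Phi$ restricted to $\Sub(\mathcal R)$, followed by projection to the coordinates indexed by $\N$ (i.e. forgetting the constant $\Delta_X$ coordinate), is still injective and is a homeomorphism onto its image: dropping a coordinate on which a function is constant does not change whether the resulting map is a homeomorphism onto its image. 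This gives exactly the map $\mathcal S\mapsto(\mathcal S\cap\varphi_n)_n$ of the statement, landing in $\prod_n\MAlg(\varphi_n,M_{\varphi_n})$, and shows it is a homeomorphism onto its image.

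The only genuinely delicate point is making sure the forgotten coordinate really is constant in the measure algebra sense: one must check that a Borel subequivalence relation, viewed in $\MAlg(\mathcal R,M)$, indeed has $M(\mathcal S\bigtriangleup(\mathcal S\cup\Delta_X))=0$, i.e. that $M(\Delta_X\setminus\mathcal S)=0$; this is immediate because an honest equivalence relation representing $\mathcal S$ contains $\Delta_X$ pointwise, and any two representatives agree $M$-almost everywhere. Everything else — that a composition of a homeomorphism onto its image with a coordinate projection that is injective on the image is again a homeomorphism onto its image, and that $\Sub(\mathcal R)$ is closed hence Polish — is routine and follows from Theorem \ref{thm: sub R is Polish} and Proposition \ref{prop: exhaustion by finite and topology}. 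I do not expect any real obstacle here; the corollary is essentially a bookkeeping consequence of the two cited results once one observes the diagonal coordinate is frozen.
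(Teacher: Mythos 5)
Your argument is correct and is exactly the paper's proof, just written out in more detail: apply Proposition \ref{prop: exhaustion by finite and topology} to the finite-measure cover $\{\Delta_X\}\cup\{\varphi_n\}$ of $\mathcal R$ given by the uniquely generating sequence, and drop the diagonal coordinate because every $\mathcal S\in\Sub(\mathcal R)$ contains $\Delta_X$, so that coordinate is constant. No issues.
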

\begin{proof}
	This is a direct consequence of the definition of uniquely generating sequences of moving partial involutions and of Proposition \ref{prop: exhaustion by finite and topology}
	once we note that every equivalence relation contains $\Delta_X$.
\end{proof}

Following the above corollary and Kechris' terminology, we now call \textbf{strong topology} the topology induced by $d_m$ on $\Sub(\mathcal R)$, which is Polish by Theorem \ref{thm: sub R is Polish}.

\subsection{Lower topologies on measure algebras}

Let us first work a bit more on the relationship between $M$ and $m$. Observe that $M$ defines a lower-semi continuous function on $\MAlg(Y,M)$ (possibly taking $+\infty$ as a value). Indeed if $M(A)>\alpha$, we have a $\delta>0$ such that $M(C)<M(A)-\alpha$ whenever $C$ is a subset of $A$ satisfying $m(C)<\delta$, and so as soon as $m(A\setminus B)<\delta$ we have $M(B)>\alpha$. In particular for every $k\in\N$ the set of elements with measure $\leq k$ is closed. 

Let us denote by $\MAlg_f(\mathcal R,M)$ the $F_\sigma$ subset of elements of $\MAlg(\mathcal R,M)$ with finite $M$-measure.
It can be endowed with the metric $d_M(A,B)=M(A\bigtriangleup B)$ which clearly refines the topology induced by $d_m$.
Moreover, the proof of Proposition \ref{prop: dmu complete separable} can easily be adapted to show that $d_M$ is complete separable (see \cite[Lem.\ 2.1]{lemaitrePolishTopologiesGroups2022} for a complete proof). Let us see why $d_m$ and $d_M$ yield different topologies on $\MAlg_f(\mathcal R,M)$.

\begin{example}\label{ex: full group leaving}
Going back to the case $Y=\mathcal R$, observe that if $T$ in the full group of $\mathcal R$ is aperiodic, then $T^n\to \emptyset$ for $d_m$ while it stays in the sphere around $1$ for $d_M$. In particular $d_m$ does not refine $d_M$. 
Let us also mention that $[\mathcal R]$ is $d_M$-closed, and that the metric induced by $d_M$ on $[\mathcal R]$ is the well-known uniform metric $d_u(S,T)=\mu(\{x\in X\colon S(x)\neq T(x)\})$, endowing $[\mathcal R]$ with a Polish group topology.
\end{example}
Despite the above example, $M$ and $m$ share the same \emph{lower topologies} when restricted to $\MAlg_f(Y,M)$. These lower topologies are not Hausdorff, however they have nice continuity properties.
\begin{definition}
	Let $(Y,M)$ be a $\sigma$-finite measured space. We endow its measure algebra with the \textbf{lower $M$-topology} which is defined by taking as neighborhoods of an element $A\in\MAlg(Y,M)$ all sets of the form $$\{B\in\MAlg(Y,M): M(A\setminus B)<\epsilon\}$$ for some $\epsilon>0$. 
\end{definition}
This neighborhood system does satisfy the axiom that implies that it is a basic neighborhood system for the topology it induces, namely every neighborhood $V$ contains a smaller neighborhood all of whose elements admit $V$ as a neighborhood (axiom $\mathrm{(V_{IV})}$ in \cite[Chap.\ 1, \S 1.2]{bourbakiGeneralTopologyChapters1998}). This is simply because if $M(A\setminus B)<\frac{\epsilon}2$ and $M(B\setminus C)<\frac\epsilon 2$ then $M(A\setminus C)<\epsilon$ ).

Note that the only neighborhood of the emptyset is the whole measure algebra, while $Y$ is contained in every neighborhood of every point.
Moreover, $\{\emptyset\}
$ is closed, and more generally $M$ is lower semi-continuous for the lower $M$-topology.

\begin{lemma}\label{lem: lower topology on finite measure}
	If $M$ is a $\sigma$-finite measure on $Y$ and $m$ is an equivalent finite measure, then the measures $M$ and $m$ induce the same lower topology on the space $\MAlg_f(Y,M)$ of finite $M$-measure subsets.
\end{lemma}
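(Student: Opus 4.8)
The plan is to show that each basic neighborhood in one of the two lower topologies contains a basic neighborhood in the other, at every point of $\MAlg_f(Y,M)$; since two topologies on a set coincide exactly when they have the same neighborhood filters at each point, this suffices. Recall that a basic lower-$M$-neighborhood of $A$ is $\{B: M(A\setminus B)<\epsilon\}$, and likewise for $m$. First observe that $m$ and $M$ being equivalent, they share the same null sets, so $\MAlg(Y,m)$ and $\MAlg(Y,M)$ have the same underlying set, inside which $\MAlg_f(Y,M)$ sits; moreover every element has finite $m$-measure since $m$ is finite.

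For the inclusion of a lower-$M$-neighborhood of $A$ into a lower-$m$-neighborhood of $A$, I would invoke the standard $\epsilon$--$\delta$ characterization of absolute continuity for finite measures, namely \cite[Lem.\ 4.2.1]{cohnMeasureTheorySecond2013} (already used for Lemma \ref{lem: unif cont equiv meas}), applied to the finite measure $m\ll M$: this produces $\delta>0$ such that $M(C)<\delta$ implies $m(C)<\epsilon$ for every measurable $C$, and taking $C=A\setminus B$ gives $\{B: M(A\setminus B)<\delta\}\subseteq\{B: m(A\setminus B)<\epsilon\}$. This direction does not use anything about $A$ and in fact holds on all of $\MAlg(Y,M)$.

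The reverse inclusion is where $M(A)<+\infty$ enters, and it is the only mildly delicate point: in general a measurable set can have arbitrarily small $m$-measure yet infinite $M$-measure, so $M(A\setminus B)$ cannot be controlled by $m(A\setminus B)$ uniformly in $A$. Since however $A\setminus B\subseteq A$, I would pass to the \emph{restricted} measures $M_A$ and $m_A$ on $A$, which are now both finite and mutually absolutely continuous, and apply \cite[Lem.\ 4.2.1]{cohnMeasureTheorySecond2013} once more with the roles of the two measures swapped, this time to the finite measure $M_A\ll m_A$: this yields $\delta'>0$ such that for measurable $C\subseteq A$, $m(C)<\delta'$ implies $M(C)<\epsilon$, and $C=A\setminus B$ finishes the argument. (Alternatively one could note that $M_A$ has an $m$-integrable Radon--Nikodym density on $A$ and use absolute continuity of the integral, but citing Cohn twice seems cleanest.) I do not expect any genuine obstacle here beyond remembering to localize to $A$ in this second step.
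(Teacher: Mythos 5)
Your proof is correct and follows essentially the same route as the paper: both directions reduce to the $\epsilon$--$\delta$ characterization of absolute continuity, with the key point being the localization to $A$ (where $M$ becomes finite) for the direction controlling $M(A\setminus B)$ by $m(A\setminus B)$. The paper's proof is just a terser version of the same argument, so there is nothing to add.
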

\begin{proof}
	Take $A$ such that $M(A)<+\infty$. 
	Because $M$ and $m$, when restricted to $A$, are equivalent, we find $\delta$ such that every subset of $A$ of $m$ measure less than $\delta$ has $M$-measure less than $\epsilon$. Then $\{B: m(A\setminus B)<\delta\}\subseteq\{B: M(A\setminus B)<\epsilon\}$. So the lower $m$-topology refines the lower $M$-topology. The other inequality works the same. 
\end{proof}
Let us note that the space of infinite measure subsets is open for the lower $M$-topology, while it is not for the lower $m$-topology.
Indeed if $A$ has infinite measure, the set of $B$ such that $M(A\setminus B)<1$ does not contain any $m$-neighborhood, because there are subsets of $A$ of very small $m$-measure but still with infinite measure.

We will now list various interesting properties of the lower topology, and then relate it to the usual topology in the last two lemmas.

\begin{lemma}\label{lem: intersection is lower continous}
	Let $M$ be a $\sigma$-finite measure on $Y$. Then the intersection map $(A_1,A_2)\mapsto A_1\cap A_2$ is continuous, where we put the $M$-lower topology everywhere.
\end{lemma}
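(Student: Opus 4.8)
The plan is to show that the intersection map is continuous at an arbitrary point $(A_1, A_2)$ when all three copies of the measure algebra carry the lower $M$-topology. By the definition of the lower topology, a basic neighborhood of the output $A_1 \cap A_2$ has the form $\{C : M((A_1 \cap A_2) \setminus C) < \epsilon\}$ for some $\epsilon > 0$, so I need to produce a neighborhood of $(A_1, A_2)$ in the product all of whose images land in this set.

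First I would observe the key set-theoretic inclusion: for any $B_1, B_2$,
\[
(A_1 \cap A_2) \setminus (B_1 \cap B_2) \subseteq (A_1 \setminus B_1) \cup (A_2 \setminus B_2).
\]
Indeed, if $x$ lies in $A_1 \cap A_2$ but not in $B_1 \cap B_2$, then $x$ fails to be in at least one of $B_1$ or $B_2$, and since $x \in A_1$ and $x \in A_2$, it lies in the corresponding difference. Applying subadditivity of $M$ gives
\[
M\bigl((A_1 \cap A_2) \setminus (B_1 \cap B_2)\bigr) \leq M(A_1 \setminus B_1) + M(A_2 \setminus B_2).
\]

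Now I would simply take the product neighborhood $\{B_1 : M(A_1 \setminus B_1) < \epsilon/2\} \times \{B_2 : M(A_2 \setminus B_2) < \epsilon/2\}$, which is a neighborhood of $(A_1, A_2)$ for the product of the lower $M$-topologies. For any $(B_1, B_2)$ in this neighborhood, the displayed estimate yields $M((A_1 \cap A_2) \setminus (B_1 \cap B_2)) < \epsilon$, so $B_1 \cap B_2$ lies in the prescribed neighborhood of $A_1 \cap A_2$. This establishes continuity.

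I do not anticipate any real obstacle here: the whole content is the elementary inclusion above, and the argument is essentially formal once one unwinds the definition of the lower topology. The only point worth a moment's care is that the lower topology is genuinely being used rather than the metric topology — the asymmetry of the neighborhoods (controlling $M(A \setminus B)$ rather than $M(A \bigtriangleup B)$) is exactly what makes the one-sided inclusion suffice, and this is why the statement holds even though intersection is not as well-behaved for the symmetric-difference metric when infinite-measure sets are involved.
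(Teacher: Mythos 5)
Your proof is correct and follows exactly the same route as the paper's: the key set-theoretic inclusion $(A_1\cap A_2)\setminus(B_1\cap B_2)\subseteq (A_1\setminus B_1)\cup(A_2\setminus B_2)$ together with subadditivity of $M$ is precisely the paper's argument, with only the cosmetic difference of using $\epsilon/2$ on each factor instead of getting a $2\epsilon$ bound. Nothing to add.
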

\begin{proof}
	Let $\epsilon>0$. Note that $(A_1\cap A_2)\setminus (B_1\cap B_2)\subseteq (A_1\setminus B_1)\cup (A_2\setminus B_2)$, so if
	 if $M(A_1\setminus B_1)<\epsilon$ and $M(A_2\setminus B_2)<\epsilon$, then $M((A_1\cap A_2)\setminus (B_1\cap B_2))<2\epsilon$ as wanted.
\end{proof}

\begin{lemma}\label{lem: countable union is lower continous}
	Let $m$ be a finite measure on $Y$. Then the countable union map $(A_n)_{n\in\N}\mapsto \bigcup_{n\in\N}A_n$ is continuous for the $m$-lower topology everywhere.
\end{lemma}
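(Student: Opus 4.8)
The plan is to check continuity at an arbitrary point $(A_n)_{n\in\N}$ straight from the definition of the lower topology; the one subtlety is reducing the infinite union $\bigcup_n A_n$ to a finite one, and this is precisely where the finiteness of $m$ is used.

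Fix $\epsilon>0$ and write $A=\bigcup_n A_n$. A basic neighborhood of $A$ for the lower $m$-topology is $W=\{B:m(A\setminus B)<\epsilon\}$. Since $m$ is finite we have $m(A)<+\infty$, so the decreasing sequence of sets $\big(A\setminus\bigcup_{n<N}A_n\big)_N$ has finite measure and empty intersection; by continuity from above there is $N$ with $m\big(A\setminus\bigcup_{n<N}A_n\big)<\epsilon/2$. Let $V$ be the set of sequences $(B_n)_{n\in\N}$ with $m(A_n\setminus B_n)<\epsilon/(2N)$ for all $n<N$; as it constrains only finitely many coordinates, $V$ is a basic neighborhood of $(A_n)$ in the product of lower $m$-topologies.

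Now take $(B_n)\in V$ and set $B=\bigcup_n B_n$. Using $B\supseteq\bigcup_{n<N}B_n$ and $A\supseteq\bigcup_{n<N}A_n$ we get
\[
A\setminus B\ \subseteq\ A\setminus\bigcup_{n<N}B_n\ \subseteq\ \Big(A\setminus\bigcup_{n<N}A_n\Big)\cup\bigcup_{n<N}(A_n\setminus B_n),
\]
hence $m(A\setminus B)<\epsilon/2+\sum_{n<N}m(A_n\setminus B_n)<\epsilon/2+N\cdot\epsilon/(2N)=\epsilon$, i.e.\ $B\in W$. Thus the union map sends $V$ into $W$, which proves continuity.

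The only real obstacle is noticing that finiteness of $m$ cannot be weakened to $\sigma$-finiteness: if $m(A)=+\infty$, the tail $A\setminus\bigcup_{n<N}A_n$ may have infinite measure for every $N$, so no neighborhood of $(A_n)$ depending on finitely many coordinates controls the union — matching the earlier remark that the infinite-measure subsets form a lower-$M$-open but not lower-$m$-open set.
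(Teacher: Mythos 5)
Your proof is correct and follows essentially the same route as the paper: use finiteness of $m$ to truncate the union to the first $N$ sets up to an $\epsilon/2$ error, then control those $N$ coordinates with a basic product neighborhood. The only differences are cosmetic (your $\epsilon/2$ bookkeeping versus the paper's $2\epsilon$), plus your closing remark on why $\sigma$-finiteness does not suffice, which the paper makes separately.
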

\begin{proof}
	Let $(A_n)_{n\in\N}$ be a family of elements of the measure algebra, then since $m$ is finite we may find $N$ such that $\mu((\bigcup_n A_n)\setminus\bigcup_{n<N} A_n))<\epsilon$. Now if we take $B_0,...,B_{N-1}$ such that $m(A_i\setminus B_i)<\epsilon/N$, and any $B_{N},B_{N+1},...$, we see that $m((\bigcup_n A_n)\setminus (\bigcup_n B_n))<2\epsilon$ as wanted.
\end{proof}
\begin{remark}
	Note that the countable union map is \emph{not} continuous if we put the usual topology everywhere (indeed $X$ can easily be obtained as a limit of unions of sequences of sets which  converge coordinatewise to $\emptyset$). Here is nevertheless a positive result for the usual topology which will be useful later on. 
\end{remark}
\begin{lemma}\label{lem: continuous union on partition}
	Let $m$ be a finite measure on $Y$, let $(Y_j)_{j\in\N}$ be a disjoint family of measurable subsets of $Y$. Then the restriction of the countable union map to the set
	\[
	\{(A_n)\in\MAlg(Y,m)^\N\colon \forall n\in\N, A_n\subseteq Y_n\},
	\]
	is continuous for the $d_m$-topology.
\end{lemma}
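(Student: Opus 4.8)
The plan is to use the disjointness of the family $(Y_j)$ to reduce the continuity of the countable union map to the coordinatewise data, exploiting finiteness of $m$ to control the tail.

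First I would observe that since the $Y_j$ are pairwise disjoint and $A_n \subseteq Y_n$, the symmetric difference of two unions splits cleanly: for sequences $(A_n)$ and $(B_n)$ with $A_n, B_n \subseteq Y_n$, we have
\[
\Bigl(\bigcup_n A_n\Bigr) \bigtriangleup \Bigl(\bigcup_n B_n\Bigr) = \bigsqcup_n (A_n \bigtriangleup B_n),
\]
because the $n$-th symmetric difference is contained in $Y_n$ and the $Y_n$'s are disjoint. Hence $m\bigl((\bigcup_n A_n)\bigtriangleup(\bigcup_n B_n)\bigr) = \sum_n m(A_n \bigtriangleup B_n)$.

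Next, fix a point $(A_n)$ in the given set and $\epsilon > 0$. Since $m$ is finite and the $Y_j$ are disjoint, $\sum_n m(Y_n) \leq m(Y) < +\infty$, so there is $N$ with $\sum_{n \geq N} m(Y_n) < \epsilon/2$. In particular $\sum_{n\geq N} m(A_n) < \epsilon/2$. Now I would take the $d_m$-neighborhood of $(A_n)$ consisting of all $(B_n)$ (still in the set, so $B_n \subseteq Y_n$) with $\tilde d((A_n),(B_n))$ small enough — more concretely, weighting so that $\sum_{n < N} m(A_n \bigtriangleup B_n) < \epsilon/2$ is guaranteed; this is possible since the product metric $\tilde d$ controls each of the finitely many coordinates $n < N$ up to a fixed positive factor $2^{-n-1}/m(Y_n)$. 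For such $(B_n)$, using $B_n \subseteq Y_n$ we get $\sum_{n \geq N} m(A_n \bigtriangleup B_n) \leq \sum_{n\geq N} m(Y_n) < \epsilon/2$, and combining with the displayed identity above,
\[
m\Bigl(\bigcup_n A_n \bigtriangleup \bigcup_n B_n\Bigr) = \sum_{n<N} m(A_n \bigtriangleup B_n) + \sum_{n \geq N} m(A_n \bigtriangleup B_n) < \epsilon,
\]
which is the desired continuity.

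There is no real obstacle here: the only subtlety is making sure that the product-metric neighborhood genuinely controls the finitely many coordinates $n < N$ in the $d_m$-metric (which it does, since on each fixed coordinate the weight is a fixed positive constant), and that the tail is absorbed purely by the measures of the $Y_n$'s thanks to disjointness — it is the disjointness hypothesis that rescues us from the failure of continuity noted in the preceding remark, since without it the tail sets could overlap and accumulate to something of large measure. I would phrase the argument with the explicit identity $(\bigcup A_n)\bigtriangleup(\bigcup B_n) = \bigsqcup (A_n \bigtriangleup B_n)$ up front, as it makes both the reduction to finitely many coordinates and the tail estimate immediate.
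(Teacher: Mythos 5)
Your proof is correct and follows essentially the same route as the paper: truncate at an index $N$ beyond which $\sum_{n\geq N}m(Y_n)<\epsilon/2$ (using finiteness of $m$ and disjointness of the $Y_n$'s) and control the finitely many remaining coordinates via the product topology. The explicit identity $(\bigcup_n A_n)\bigtriangleup(\bigcup_n B_n)=\bigsqcup_n(A_n\bigtriangleup B_n)$ is a slightly more detailed way of packaging what the paper states as ``the union is up to an $\epsilon$ error independent of the tail,'' but the argument is the same.
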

\begin{proof}
	Since the $Y_n$'s are disjoint and $m$ is finite, given any $\epsilon>0$ we can find $N$ such that $m(\bigcup_{n>N} Y_n)<\epsilon$, in particular if $(A_n)$ satisfies $A_n\subseteq Y_n$ for every $n$ then its union is up to an $\epsilon$ error independent of $(A_n)_{n>N}$. The conclusion thus follows from the continuity of finite unions for the $d_m$-topology.
\end{proof}

We now relate the Hausdorff topology associated to the metric $d_m$ and to the corresponding lower $m$-topology. 
\begin{lemma}\label{lem: baire class one lower}
	Let $m$ be a finite measure on $Y$, denote by $\tau_m^l$ the lower $m$-topology and by $\tau_m$ the topology induced by the metric $d_m$.
	Then the identity map $(\MAlg(Y,m),\tau^m_l)\to (\MAlg(Y,m),\tau_m)$ is Baire class one.
\end{lemma}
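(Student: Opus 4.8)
The goal is to show that the identity map from $(\MAlg(Y,m),\tau_m^l)$ to $(\MAlg(Y,m),\tau_m)$ is Baire class one, i.e.\ that the preimage of every open set is $F_\sigma$ in the lower topology; equivalently, since both are separable metrizable-ish... actually $\tau_m^l$ is not metrizable, so I would instead use the standard characterization that it suffices to show the identity is a pointwise limit of $\tau_m^l$-continuous maps, or directly that preimages of closed $\tau_m$-balls are $G_\delta$ in $\tau_m^l$. The cleanest route: show that for every $A_0\in\MAlg(Y,m)$ and every $\epsilon>0$, the closed ball $\bar B_{d_m}(A_0,\epsilon)=\{A: m(A\bigtriangleup A_0)\leq\epsilon\}$ is $G_\delta$ (in fact closed) for $\tau_m^l$, and conclude by a standard argument that the identity is Baire class one since $\tau_m$ has a countable basis of such balls (around a countable dense set). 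The first reduction I would make is: it is enough to exhibit, for each $A_0$, the function $A\mapsto m(A\bigtriangleup A_0)$ as a $\tau_m^l$-Baire-class-one (lower semicontinuous, even) function, because $m(A\bigtriangleup A_0)=m(A\setminus A_0)+m(A_0\setminus A)$ and each summand should be lower semicontinuous for $\tau_m^l$.

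The key computation is the semicontinuity of these two functions. For $A\mapsto m(A_0\setminus A)$: by definition of the lower topology, the basic neighborhoods of $A$ control $m(A\setminus B)$ from above for $B$ nearby, so $m(A_0\setminus B)\leq m(A_0\setminus A)+m(A\setminus B)$ shows that $A\mapsto m(A_0\setminus A)$ is \emph{upper} semicontinuous for $\tau_m^l$ — this is the easy direction but it goes the wrong way. For $A\mapsto m(A\setminus A_0)$, note $m(B\setminus A_0)\geq m(A\setminus A_0)-m(A\setminus B)$, so this function is \emph{lower} semicontinuous for $\tau_m^l$. So $m(A\bigtriangleup A_0)$ is a sum of one lsc and one usc function, which is not obviously Baire class one. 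The fix is to work with the full measure $m(Y)$ constant: $m(A\setminus A_0)=m(A)-m(A\cap A_0)$, and $A\mapsto m(A)$ is $\tau_m^l$-lower semicontinuous (stated in the excerpt), while $A\mapsto m(A\cap A_0)$ is $\tau_m^l$-upper semicontinuous (from Lemma \ref{lem: intersection is lower continous} applied with $A_0$ fixed, plus usc of the "$\setminus$" bound, or directly $m(B\cap A_0)\geq m(A\cap A_0)-m(A\setminus B)$ wait that gives lsc... so $m(A\cap A_0)$ is lsc too). Hmm — so both pieces are lsc, which forces me to reconsider: actually $m(A\setminus A_0)=m(A)-m(A\cap A_0)$ is then lsc minus lsc, again ambiguous. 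This tells me the genuinely correct statement must be that $d_m(\cdot,A_0)$, while not semicontinuous either way, is a countable \emph{sup of a countable inf}, or that one uses a partition of $Y$ into small pieces to approximate.

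The approach I would actually commit to: fix a countable dense sequence $(A_k)$ in $(\MAlg(Y,m),d_m)$, and for each $k$ and each rational $q>0$ express $\{A: d_m(A,A_k)<q\}$ as a countable union over finite rational data of sets of the form $\{A: m(A_k\setminus A)<q_1\}\cap\{A: m(A\setminus A_k)<q_2\}$ with $q_1+q_2\leq q$; the first factor is $\tau_m^l$-open by definition, and for the second factor I would write $\{A: m(A\setminus A_k)<q_2\}$ as a \emph{countable union} of $\tau_m^l$-closed sets by discretizing: partition $Y\setminus A_k$ into finitely many sets $Z_1,\dots,Z_r$ of measure $<q_2$ (possible since $m$ is finite, for $r$ large depending on a chosen mesh $<\delta$), and observe $m(A\setminus A_k)<q_2$ holds iff $A\setminus A_k$ misses "enough" of these pieces — more precisely $\{A: m(A\cap Z_i)\le \eta \text{ for all } i\text{ in some cofinite set}\}$, using that $A\mapsto m(A\cap Z_i)$ is $\tau_m^l$-lsc so $\{m(A\cap Z_i)\le\eta\}$ is $\tau_m^l$-closed. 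Taking a countable union over finer and finer partitions and rational thresholds expresses the $d_m$-ball as $F_\sigma$ in $\tau_m^l$. Since every $\tau_m$-open set is a countable union of such balls, its preimage under the identity is $F_\sigma$ in $\tau_m^l$, which is exactly Baire class one.

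The main obstacle, and where I would spend the most care, is the discretization step: making precise how $m(A\setminus A_k)<q_2$ is captured by countably many $\tau_m^l$-closed conditions of the form "$m(A\cap Z_i)$ is small for all but finitely many $i$" as the partition mesh shrinks. One must check both inclusions: if $m(A\setminus A_k)<q_2$ then for a fine enough partition $A\setminus A_k$ has small intersection with each piece (true by dominated convergence / regularity since $m$ is finite and atomless-ish — actually just continuity of measure); conversely a uniform bound $m(A\cap Z_i)\le\eta$ over a partition into $r$ pieces gives $m(A\setminus A_k)\le r\eta$, which is $<q_2$ for $\eta$ chosen small relative to $r$. Threading these quantifiers into a genuine countable union of closed sets, rather than an uncountable intersection, is the delicate bookkeeping; everything else is routine.
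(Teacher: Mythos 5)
Your overall skeleton is the one the paper uses: reduce to showing that $d_m$-balls are $F_\sigma$ for the lower topology and finish by separability of $(\MAlg(Y,m),d_m)$; and you correctly establish the two facts that drive the whole proof, namely that $A\mapsto m(A_0\setminus A)$ is upper semicontinuous and $A\mapsto m(A\setminus A_0)$ is lower semicontinuous for $\tau_m^l$. The gap is that you then abandon these facts instead of using them, and the replacement step you actually commit to does not work. You propose to write $\{A\colon m(A\setminus A_k)<q_2\}$ as a countable union of $\tau_m^l$-closed sets by partitioning $Y\setminus A_k$ into $r$ pieces $Z_1,\dots,Z_r$ and imposing a uniform bound $m(A\cap Z_i)\le\eta$ with $r\eta<q_2$. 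But if the $Z_i$ partition $Y\setminus A_k$ and each has measure at most $\eta$, then $r\eta\ge m(Y\setminus A_k)$, so the requirement $r\eta<q_2$ is unattainable in the only nontrivial case $q_2\le m(Y\setminus A_k)$; for coarser partitions, a set $A$ with $A\setminus A_k$ concentrated in a single piece satisfies $m(A\setminus A_k)<q_2$ while violating any uniform bound $\eta<q_2/r$, so the forward implication fails too (and ``for all $i$ in some cofinite set'' is vacuous for a finite index set). You flag this step yourself as undone bookkeeping, and as sketched it is not just undone but false.

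The irony is that no discretization is needed: the semicontinuity you already proved closes the argument. Since $A\mapsto m(A\setminus A_k)$ is lower semicontinuous for $\tau_m^l$, the set $\{A\colon m(A\setminus A_k)<q_2\}=\bigcup_n\{A\colon m(A\setminus A_k)\le q_2-\tfrac1n\}$ is a countable union of $\tau_m^l$-closed sets, with no partition of $Y\setminus A_k$ in sight; combined with the openness of $\{A\colon m(A_k\setminus A)<q_1\}$, your decomposition of the ball over rationals $q_1+q_2\le q$ and the countable dense sequence $(A_k)$ then does exactly what you wanted. This is in substance the paper's proof, which packages the same two facts through the identity $d_m(A,B)=m(A)+m(B)-2m(A\cap B)$: each of $B\mapsto m(B)$ and $B\mapsto m(A\cap B)$ is lower semicontinuous for $\tau_m^l$ and has its strict sub- and superlevel sets of the required form (the superlevel sets are basic $\tau_m^l$-open sets, the sublevel sets are countable unions of closed sets), so $d_m(A,\cdot)$ is Baire class one and its balls have the right complexity. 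Your worry that ``lsc plus usc'' or ``lsc minus lsc'' is not obviously Baire class one is what derailed you: for real-valued functions, $\{f-g<a\}=\bigcup_{q\in\Q}\left(\{f<q\}\cap\{g>q-a\}\right)$ reduces everything to the level sets of $f$ and $g$ separately, which you had already computed.
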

\begin{proof}
	We begin by noting some useful continuity properties for the lower $m$-topology.
	By definition, for every $A\in\MAlg(Y,m)$ the map $B\mapsto m(A\setminus B)$ is upper semi-continuous for the lower topology. 
	Since $m(B)=m(X)-m(X\setminus B)$, we deduce that $m$ defines a lower semi-continuous map $(\MAlg(Y,m),\tau_m^l)\to \R^+$.
	By Lemma \ref{lem: intersection is lower continous}, for every $A\in\MAlg(Y,m)$ the map $B\mapsto m(A\cap B)$ is thus lower semi-continuous as well. 
	
	We finally observe that
	$$d_m(A,B)=m(A\bigtriangleup B)=m(A)+m(B)-2 m(A\cap B),$$
	so $d_m(A,\cdot)$ is an Baire class one function on $(\MAlg(Y,\nu),\tau_m^l)\to \R^+$, being the sum of two Baire class one functions. In particular, $d_m$-open balls are $F_\sigma$ for $\tau_m^l$.
	Since every $\tau_m$-open set is a countable union of open balls by separability, the conclusion follows.
\end{proof}

We conclude this section by a lemma which interwines the two topologies we have defined in presence of a finite measure.

\begin{lemma}\label{lem: inclusion is closed}
	Let $m$ be a finite measure on $Y$.
	The space of all $(A,B)$ such that $A\subseteq B$ is closed if we put on the first coordinate the $m$-lower topology, and on the second the usual measure algebra topology. 
\end{lemma}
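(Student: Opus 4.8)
The plan is to show that the complement of the set $\{(A,B) : A \subseteq B\}$ is open in the product topology $\tau_m^l \times \tau_m$, by finding, around any pair $(A_0,B_0)$ with $A_0 \not\subseteq B_0$, a basic open box contained in the complement. Since $A_0 \not\subseteq B_0$ means $m(A_0 \setminus B_0) =: \epsilon > 0$, the natural candidate is the box
\[
U = \{A : m(A_0 \setminus A) < \tfrac{\epsilon}{2}\} \times \{B : d_m(B_0, B) < \tfrac{\epsilon}{2}\}.
\]
First I would verify that for $(A,B) \in U$ we indeed have $A \not\subseteq B$, i.e.\ $m(A \setminus B) > 0$. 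The key inclusion-exclusion estimate is: $A \supseteq A_0 \setminus (A_0 \setminus A)$, so $m(A \cap (A_0 \setminus B_0)) \geq m(A_0 \setminus B_0) - m(A_0 \setminus A) > \epsilon - \tfrac{\epsilon}{2} = \tfrac{\epsilon}{2}$. On the other hand $m(B \bigtriangleup B_0) < \tfrac{\epsilon}{2}$, so the part of $A_0 \setminus B_0$ sitting inside $A$ can be covered by $B$ only up to an error $m(B \setminus B_0) < \tfrac{\epsilon}{2}$; hence $m(A \setminus B) \geq m\big(A \cap (A_0 \setminus B_0) \setminus B\big) \geq m\big(A \cap (A_0 \setminus B_0)\big) - m(B \setminus B_0) > \tfrac{\epsilon}{2} - \tfrac{\epsilon}{2}$, which is only $\geq 0$ and not strictly positive, so the crude constants need to be tightened. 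The fix is to choose the two radii as $\tfrac{\epsilon}{3}$ instead, which makes $m(A \cap (A_0\setminus B_0)) > \tfrac{2\epsilon}{3}$ and leaves $m(A \setminus B) > \tfrac{2\epsilon}{3} - \tfrac{\epsilon}{3} = \tfrac{\epsilon}{3} > 0$, as desired.

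The only subtlety worth dwelling on is the asymmetry of the two topologies: on the first coordinate we only control $m(A_0 \setminus A)$ (the lower topology gives no bound on $m(A \setminus A_0)$), but this is exactly the quantity we need, since we want a lower bound on how much of $A_0 \setminus B_0$ survives in $A$, and extra mass of $A$ outside $A_0$ can only help (or be harmless). On the second coordinate we genuinely need the full metric topology, controlling $m(B \setminus B_0)$ as well as $m(B_0 \setminus B)$ — in fact it is only $m(B \setminus B_0)$ that enters the estimate, so one could even replace $\tau_m$ on the second coordinate by the \emph{upper} $m$-topology (neighborhoods $\{B : m(B\setminus B_0) < \epsilon\}$), but the weaker statement with $\tau_m$ suffices for our purposes. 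I do not expect any real obstacle here; the argument is a one-line triangle-type inequality once the radii are pinned down, and the finiteness of $m$ is used only implicitly to guarantee all quantities involved are finite real numbers so the subtractions make sense.

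\begin{proof}
	Suppose $A_0 \not\subseteq B_0$, so that $\epsilon := m(A_0 \setminus B_0) > 0$. Consider the neighborhood
	$$V = \{A \in \MAlg(Y,m) : m(A_0 \setminus A) < \tfrac{\epsilon}{3}\} \times \{B \in \MAlg(Y,m) : d_m(B_0,B) < \tfrac{\epsilon}{3}\}$$
	of $(A_0,B_0)$, where the first factor is a basic $\tau_m^l$-neighborhood and the second a $d_m$-ball. Let $(A,B) \in V$. Since $A \supseteq A_0 \setminus (A_0 \setminus A)$, we get
	$$m\big(A \cap (A_0 \setminus B_0)\big) \geq m(A_0 \setminus B_0) - m(A_0 \setminus A) > \epsilon - \tfrac{\epsilon}{3} = \tfrac{2\epsilon}{3}.$$
	On the other hand $m(B \setminus B_0) \leq d_m(B_0,B) < \tfrac{\epsilon}{3}$, so
	$$m(A \setminus B) \geq m\big((A \cap (A_0\setminus B_0)) \setminus B\big) \geq m\big(A \cap (A_0 \setminus B_0)\big) - m(B \setminus B_0) > \tfrac{2\epsilon}{3} - \tfrac{\epsilon}{3} = \tfrac{\epsilon}{3} > 0,$$
	where we used $A_0 \setminus B_0 \subseteq (A_0 \setminus B_0) \setminus B \,\cup\, (B \setminus B_0)$ intersected with $A$. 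Hence $A \not\subseteq B$. This shows $V$ is contained in the complement of $\{(A,B) : A \subseteq B\}$, which is therefore open.
\end{proof}
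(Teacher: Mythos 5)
Your proof is correct and is essentially the paper's own argument: show the complement is open by a triangle-type estimate on a basic box around a pair with $m(A_0\setminus B_0)=\epsilon>0$ (the paper does it in one line with radius $\epsilon/2$). Note that your worry about $\epsilon/2$ was unfounded — since both controlling inequalities are strict, $m(A\cap(A_0\setminus B_0))>\epsilon/2>m(B\setminus B_0)$ already gives $m(A\setminus B)>0$ — but your choice of $\epsilon/3$ is of course equally valid.
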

\begin{proof}
	Suppose that $A\not\subseteq B$, let $\epsilon=m(A\setminus B)$. Then if $m(A\setminus A')<\epsilon/2$ and $m(B'\bigtriangleup B)<\epsilon/2$, we still have $A'\not\subseteq B'$. 
\end{proof}

\subsection{Seeing that the space of subequivalence relations is closed}

We now provide a more concrete proof that the space of subequivalence relations is Polish by seeing that the conditions which make an equivalence relation define closed sets in $\MAlg(\mathcal R,M)$ for the topology induced by $d_m$. First note that reflexivity means containing $\Delta_X$, and hence defines a closed set by Lemma \ref{lem: inclusion is closed}. 

Let us fix a non-singular equivalence relation $\mathcal R$, we recall that $m$ is some finite measure equivalent to the measure $M$ obtained by integrating the cardinalities of vertical fibers. 
Observe that the flip $\sigma:(x,y)\mapsto (y,x)$ quasi-preserves $M$ (this is one characterization of non-singularity, see \cite[Prop.\ 8.2]{kechrisTopicsOrbitEquivalence2004}), in particular it quasi-preserves $m$ and hence induces a homeomorphism on $\MAlg(\mathcal R,M)$. 

This implies that the space of (Borel) \textbf{subgraphs} of $\mathcal R$ (symmetric subsets of $\mathcal R\setminus\id_X$) is closed in $\MAlg(\mathcal R,M)$, because it is the set of fixed points of $\sigma$
contained in $\mathcal R\setminus\id_X$. In particular, we recover the following fact from \cite[Chap.\ 18]{kechrisSpaceMeasurepreservingEquivalence}.

\begin{proposition}\label{prop:graphing is polish}
	The space of subgraphs is Polish for the topology induced by the measure algebra $\MAlg(\mathcal R,M)$.\qed
\end{proposition}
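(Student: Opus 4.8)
The plan is to realize the space of graphings as the fixed-point set of a single homeomorphism of $\MAlg(\mathcal R,M)$, and then to conclude via the standard fact that a closed subspace of a Polish space is Polish.

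First I would record that, since $\mathcal R$ is non-singular, the flip $\sigma\colon(x,y)\mapsto(y,x)$ is a null-preserving Borel automorphism of $(\mathcal R,M)$ (one of the usual characterizations of non-singularity, see \cite[Prop.\ 8.2]{kechrisTopicsOrbitEquivalence2004}); hence it quasi-preserves the equivalent finite measure $m$ and, by Lemma \ref{lem: unif cont equiv meas} together with the remark on non-singular actions, it induces a homeomorphism $\sigma_*$ of $\MAlg(\mathcal R,M)$ with $\sigma_*^2=\id$.

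Next I would check that an element $A\in\MAlg(\mathcal R,M)$ is a graphing — that is, admits a symmetric Borel representative — if and only if $\sigma_*(A)=A$. One direction is immediate: if $B$ is a symmetric Borel representative then $\sigma(B)=B$, so $\sigma_*(A)=A$. For the converse, if $M(\sigma(A)\bigtriangleup A)=0$, then $B:=A\cap\sigma(A)$ is a symmetric Borel set (indeed $\sigma(B)=\sigma(A)\cap A=B$) which still represents $A$, since $M(B\bigtriangleup A)\leq M(\sigma(A)\bigtriangleup A)=0$. Thus the space of graphings is exactly $\{A\in\MAlg(\mathcal R,M)\colon\sigma_*(A)=A\}$.

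Finally I would observe that this fixed-point set is closed: the map $A\mapsto d_m(\sigma_*(A),A)$ is continuous on $\MAlg(\mathcal R,M)$ (being the composition of the homeomorphism $\sigma_*$, the diagonal embedding, and the continuous metric $d_m$), and the set of graphings is precisely its zero set — equivalently, it is the equalizer of the continuous maps $\sigma_*$ and $\id$ in the metrizable, hence Hausdorff, space $\MAlg(\mathcal R,M)$. Since $\MAlg(\mathcal R,M)$ is Polish by Proposition \ref{prop: dmu complete separable}, a closed subset of it is Polish, which is the claim. The only step that is not purely formal is the second one, namely the passage between the concrete notion of a symmetric subrelation and the measure-algebra identity $\sigma_*(A)=A$; once that dictionary is set up, the rest is just the closedness of fixed-point sets of continuous self-maps of Hausdorff spaces.
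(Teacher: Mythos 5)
Your argument is correct and is essentially the paper's own proof: the space of graphings is identified as the fixed-point set of the homeomorphism of $\MAlg(\mathcal R,M)$ induced by the flip $\sigma$ (which acts on the measure algebra precisely because $\mathcal R$ is non-singular), hence is closed in the Polish space $\MAlg(\mathcal R,M)$. The only difference is that you spell out the dictionary between ``admits a symmetric Borel representative'' and ``is fixed by $\sigma_*$'' via $B=A\cap\sigma(A)$, a detail the paper leaves implicit.
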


\begin{remark}
	This could also be seen through the same approach as for Theorem \ref{thm: sub R is Polish}, noting that the liminf of any sequence of subgraphs is a subgraph.
\end{remark}

To see that the space of subequivalence is closed, we need to deal with transitivity. Observe that on the space of measurable subsets of $\mathcal R$, we have an operation 
$\circ$ defined by $A\circ B:=\{(x,z): \exists y\in X: (x,y)\in A\text{ and }(y,z)\in B\}$. Note that $\circ$ extends the composition of elements of the pseudo full group of $\mathcal R$, and that it is well defined only because we are in the non-singular setup. 

\begin{lemma}
	On the pseudo full group of $\mathcal R$, the composition is continuous for the $M$-lower topology, and hence for the $m$-lower topology.
\end{lemma}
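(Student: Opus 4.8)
I would check continuity at an arbitrary point $(A,B)\in[[\mathcal R]]^2$ straight from the definition of the lower topology, the key tool being a set-theoretic inclusion controlling $(A\circ B)\setminus(A'\circ B')$ together with two elementary fibre-counting estimates. First, for a partial bijection $C$ and any $E\subseteq\mathcal R$ whose vertical fibres have size $\leq 1$ (e.g.\ $E$ a difference of two partial bijections), one has $(C\circ E)_x=E_{C(x)}$ when $x\in\dom C$ and $\emptyset$ otherwise, so $M(C\circ E)=\mu(\{x\in\dom C:C(x)\in\dom E\})$; in particular $M(C\circ D)\leq M(C)$ whenever $D$ is a partial bijection. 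Moreover, writing $C_*(\mu_{\restriction\dom C})$ for the pushforward of $\mu_{\restriction\dom C}$ under $C$, non-singularity of $\mathcal R$ gives $C_*(\mu_{\restriction\dom C})\ll\mu$ with density $\rho_C\in\LL^1(\mu)$, whence $M(C\circ E)=\int_{\dom E}\rho_C\,\d\mu$ and $\mu(\dom E)=M(E)$.

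The heart of the argument is the inclusion, valid for all $A,A',B,B'\in[[\mathcal R]]$,
\[
(A\circ B)\setminus(A'\circ B')\ \subseteq\ \big((A\setminus A')\circ B\big)\ \cup\ \big((A\cap A')\circ(B\setminus B')\big).
\]
Indeed, if $(x,z)\in A\circ B$ then $x\in\dom A$ and $z=B(A(x))$; if furthermore $(x,z)\notin A'\circ B'$, then either $(x,A(x))\notin A'$, so $(x,z)\in(A\setminus A')\circ B$, or $(x,A(x))\in A\cap A'$, in which case $(A(x),z)\notin B'$ is forced, i.e.\ $(A(x),z)\in B\setminus B'$ and $(x,z)\in(A\cap A')\circ(B\setminus B')$. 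Since $A\setminus A'$ and $A\cap A'$ are restrictions of $A$, the estimates above give
\[
M\big((A\circ B)\setminus(A'\circ B')\big)\ \leq\ M(A\setminus A')\ +\ M\big((A\cap A')\circ(B\setminus B')\big)\ \leq\ M(A\setminus A')\ +\ \int_{\dom(B\setminus B')}\rho_A\,\d\mu .
\]
Given $\epsilon>0$, absolute continuity of the integral of $\rho_A\in\LL^1(\mu)$ furnishes $\delta>0$ such that $\mu(S)<\delta$ implies $\int_S\rho_A\,\d\mu<\epsilon/2$; then for $A'$ in the lower neighbourhood $\{A':M(A\setminus A')<\epsilon/2\}$ of $A$ and $B'$ in the lower neighbourhood $\{B':M(B\setminus B')<\delta\}$ of $B$ (recall $\mu(\dom(B\setminus B'))=M(B\setminus B')$), the right-hand side is $<\epsilon$, i.e.\ $A'\circ B'$ lies in the prescribed lower neighbourhood of $A\circ B$. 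This proves continuity of $\circ$ for the $M$-lower topology. Finally, every $\varphi\in[[\mathcal R]]$ satisfies $M(\varphi)=\mu(\dom\varphi)\leq 1$, so $[[\mathcal R]]\subseteq\MAlg_f(\mathcal R,M)$, where by Lemma \ref{lem: lower topology on finite measure} the $M$-lower and $m$-lower topologies coincide; hence $\circ$ is continuous for the $m$-lower topology as well.

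\textbf{Main obstacle.} The delicate point is the second term. In the merely non-singular (as opposed to p.m.p.) setting, left composition $C\mapsto C\circ E$ may inflate $M$-measure, so one cannot bound $M\big((A\cap A')\circ(B\setminus B')\big)$ by $M(B\setminus B')$ directly; one has to route through the $\LL^1$-density $\rho_A$ and the absolute continuity of its integral, which is exactly why the resulting modulus of continuity depends on $A$. This costs nothing here, since only continuity (not uniform continuity) is claimed, but it is the reason a naive single choice $\delta=\epsilon$ does not work. One should also note in passing that $\circ$ genuinely maps $[[\mathcal R]]\times[[\mathcal R]]$ into $[[\mathcal R]]$ (a composition of partial bijections of $\mathcal R$ is again one), so that the statement is not vacuous.
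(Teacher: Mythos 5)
Your proof is correct and follows essentially the same route as the paper: the error from perturbing the first-applied factor is absorbed directly by its $M$-measure, while the error from the second-applied factor is pulled back through the first map using non-singularity (your absolute continuity of $\int\rho_A\,\d\mu$ is exactly the paper's choice of $\delta$ with $\mu(S)<\delta\Rightarrow\mu(\psi_0\inv(S))<\epsilon/2$). The inclusion $(A\circ B)\setminus(A'\circ B')\subseteq\big((A\setminus A')\circ B\big)\cup\big((A\cap A')\circ(B\setminus B')\big)$ is just a cleaner packaging of the paper's pointwise bookkeeping, and the reduction to the $m$-lower topology via Lemma \ref{lem: lower topology on finite measure} is identical.
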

\begin{proof}
	 Note that the $M$-measure of the graph of an element of the pseudo full group is the $\mu$-measure of its domain, so by Lemma \ref{lem: lower topology on finite measure}, we only need to check the continuity for the lower $M$-topology. 

	Now fix $\varphi_0,\psi_0\in[[\mathcal R]]$, let $\epsilon>0$, find $\delta$ such that whenever $\mu(A)<\delta$ we have $\mu(\psi_0\inv(A))<\epsilon/2$. Take $\varphi$ such that $M(\varphi\setminus\varphi_0)<\delta$, then $\mu(\{x\in \dom(\varphi_0): \varphi(x)\neq\varphi_0(x)\})<\delta$ (in the definition we consider that $\varphi_0(x)$ is different from $\varphi(x)$ if $x\not\in\dom\varphi$). Then take $\psi$ such that $M(\psi\setminus\psi_0)<\epsilon/2$, the set $X_1=\{x\in\dom\psi_0: \psi(x)=\psi_0(x)\}$ contains $\dom\psi_0$ up to an $\epsilon/2$ error, and $$\psi_{\restriction X_1}\inv(\{x\in \dom(\varphi_0): \varphi(x)\neq\varphi_0(x)\})=\psi_{0\restriction X_1}\inv(\{x\in \dom(\varphi_0): \varphi(x)\neq\varphi_0(x)\})$$
	has thus measure at most $\epsilon/2$.
	We conclude that the set of $x\in \dom \varphi_0\psi_0$ such that $\varphi\psi(x)=\varphi_0\psi_0(x)$ contains $\dom\varphi_0\psi_0$ up to an $\epsilon/2+\epsilon/2=\epsilon$ error as wanted. 
	The fact  that the continuity also holds for the $m$-lower topology is then a direct consequence of Lemma \ref{lem: lower topology on finite measure} since elements of $[[\mathcal R]]$ have $M$-measure at most $1$.
\end{proof}

\begin{lemma}\label{lem: composition continuous}
	The map $A,B\mapsto A\circ B$ is continuous if we put everywhere the lower $m$-topology.
\end{lemma}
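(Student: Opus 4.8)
The plan is to reduce the continuity of the general composition operation $(A,B)\mapsto A\circ B$ to the continuity of composition on the pseudo full group, which was established in the previous lemma. The key structural observation is that every measurable $A\subseteq\mathcal R$ decomposes as a countable disjoint union of graphs of elements of $[[\mathcal R]]$: fixing a uniquely generating sequence of moving partial involutions $(\varphi_n)$ (Proposition \ref{prop: unique moving invol}), together with $\Delta_X$ we may write $\mathcal R=\Delta_X\sqcup\bigsqcup_n\varphi_n$, and then $A=(A\cap\Delta_X)\sqcup\bigsqcup_n(A\cap\varphi_n)$, where each $A\cap\varphi_n$ is (the graph of) an element of the pseudo full group, namely the restriction of $\varphi_n$ to the appropriate subset of its domain. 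This decomposition depends continuously on $A$ for the lower $m$-topology by Lemma \ref{lem: lower topology on finite measure} (the pieces all have finite $M$-measure) together with the fact that $A\mapsto A\cap\varphi_n$ is lower continuous by Lemma \ref{lem: intersection is lower continous}.

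Next I would use the algebraic identity that $\circ$ distributes over countable unions in each variable: if $A=\bigsqcup_i A_i$ and $B=\bigsqcup_j B_j$ then $A\circ B=\bigcup_{i,j}(A_i\circ B_j)$. Applying this with the decompositions above, $A\circ B$ is a countable union, indexed by pairs from $(\{\Delta_X\}\cup\{\varphi_n\})^2$, of sets of the form $(A\cap\varphi_i)\circ(B\cap\varphi_j)$, each of which is the graph of the composition of two pseudo-full-group elements and therefore depends continuously on $(A,B)$ for the lower $m$-topology by the previous lemma (composition there was shown to be lower continuous; intersecting with $\varphi_i,\varphi_j$ is lower continuous by Lemma \ref{lem: intersection is lower continous}). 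Finally, the countable union map is lower continuous for the $m$-lower topology by Lemma \ref{lem: countable union is lower continous}. Composing these continuous maps gives the continuity of $(A,B)\mapsto A\circ B$.

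The main obstacle I expect is bookkeeping rather than anything deep: one must be careful that $A\cap\varphi_n$ really is an element of the pseudo full group (it is, since $\varphi_n$ has disjoint graph and $A\cap\varphi_n$ is the graph of $\varphi_n$ restricted to $\{x\in\dom\varphi_n:(x,\varphi_n(x))\in A\}$), that the composition $\varphi\psi$ appearing in the previous lemma matches $\varphi\circ\psi$ in the present notation (it does, by the remark that $\circ$ extends composition in $[[\mathcal R]]$), and that the indexing set of the union is genuinely countable so Lemma \ref{lem: countable union is lower continous} applies. One should also note explicitly that $A\mapsto A\cap\Delta_X$ and the term $(A\cap\Delta_X)\circ(B\cap\Delta_X)=A\cap B\cap\Delta_X$ are handled by the same reasoning, $\Delta_X$ being (the graph of) the identity element of $[[\mathcal R]]$. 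Everything then assembles as a finite composition of maps already proved continuous.

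\begin{proof}
	Fix a uniquely generating sequence of moving partial involutions $(\varphi_n)$ for $\mathcal R$ (Proposition \ref{prop: unique moving invol}), so that $\mathcal R=\Delta_X\sqcup\bigsqcup_n\varphi_n$. Writing $\varphi_{-1}:=\Delta_X$ for uniformity, every measurable $A\subseteq\mathcal R$ decomposes as $A=\bigsqcup_{n\geq -1}(A\cap\varphi_n)$, and each $A\cap\varphi_n$ is the graph of an element of $[[\mathcal R]]$, namely the restriction of $\varphi_n$ (resp.\ $\id$) to $\{x\in\dom\varphi_n\colon (x,\varphi_n(x))\in A\}$. By the distributivity of $\circ$ over countable disjoint unions in each variable,
	\[
	A\circ B=\bigcup_{i,j\geq -1}\bigl((A\cap\varphi_i)\circ(B\cap\varphi_j)\bigr).
	\]
	For each fixed pair $(i,j)$, the map $(A,B)\mapsto (A\cap\varphi_i)\circ (B\cap\varphi_j)$ is continuous for the lower $m$-topology: indeed $A\mapsto A\cap\varphi_i$ and $B\mapsto B\cap\varphi_j$ are continuous by Lemma \ref{lem: intersection is lower continous}, and since $A\cap\varphi_i, B\cap\varphi_j\in[[\mathcal R]]$ and $\circ$ restricted to $[[\mathcal R]]$ agrees with the composition, the previous lemma gives that $(\varphi,\psi)\mapsto \varphi\circ\psi$ is continuous for the $m$-lower topology on $[[\mathcal R]]$. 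Finally, since this is a countable union and $m$ is finite, Lemma \ref{lem: countable union is lower continous} shows that $(A,B)\mapsto A\circ B$ is continuous for the lower $m$-topology everywhere.
\end{proof}
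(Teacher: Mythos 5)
Your proof is correct and follows essentially the same route as the paper: decompose $A$ and $B$ into countably many pseudo-full-group graphs, use distributivity of $\circ$ over countable unions, and invoke the previous lemma together with Lemmas \ref{lem: intersection is lower continous} and \ref{lem: countable union is lower continous}. The only (cosmetic) difference is that the paper intersects with the graphs of a countable group $\Gamma$ with $\mathcal R=\mathcal R_\Gamma$ rather than with a uniquely generating sequence of moving partial involutions; both give a countable cover of $\mathcal R$ by pseudo-full-group elements, so the argument is the same.
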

\begin{proof}
	Fixing a countable subgroup $\Gamma\leq[\mathcal R]$ such that $\mathcal R=\mathcal R_\Gamma$,
	the desired continuity follows from the previous lemma along with the formula $$A\circ B=\bigcup_{\gamma,\gamma'\in\Gamma}\gamma\cap A\circ (\gamma'\cap B)$$ and the fact that finite intersections and countable unions are continuous maps for the lower topology (Lemma \ref{lem: intersection is lower continous} and \ref{lem: countable union is lower continous}).
\end{proof}

We thus arrive at the desired result saying that the space of transitive subrelations is closed. 

\begin{proposition}
	The space of $A\subseteq\mathcal R$ such that $A\circ A\subseteq A$ is closed in the Polish topology on $\MAlg(\mathcal R,M)$.
\end{proposition}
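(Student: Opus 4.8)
The statement asserts that $\{A\subseteq\mathcal R\colon A\circ A\subseteq A\}$ is closed in the \emph{Hausdorff} topology induced by $d_m$ on $\MAlg(\mathcal R,M)$, even though the composition operation $\circ$ was only shown to be continuous for the (non-Hausdorff) lower topology. The plan is to factor the condition ``$A\circ A\subseteq A$'' through the two topologies we have at our disposal: use the lower topology for the ``$A\circ A$'' part, where composition is continuous by Lemma \ref{lem: composition continuous}, and use the interplay with the ordinary topology provided by Lemma \ref{lem: inclusion is closed} (``inclusion is closed'' when the first coordinate carries the lower topology and the second the usual one).

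The key steps, in order, are as follows. First I would recall from Lemma \ref{lem: baire class one lower} that the identity map $(\MAlg(\mathcal R,m),\tau_m)\to(\MAlg(\mathcal R,m),\tau_m^l)$ is continuous; this is immediate since every lower-topology neighborhood $\{B\colon m(A\setminus B)<\epsilon\}$ contains the $d_m$-ball of radius $\epsilon$ around $A$. Hence the diagonal-type map $A\mapsto (A,A)$ from $(\MAlg(\mathcal R,M),\tau_m)$ into $(\MAlg(\mathcal R,M),\tau_m^l)\times(\MAlg(\mathcal R,M),\tau_m^l)$ is continuous. Composing with the lower-continuous composition map (Lemma \ref{lem: composition continuous}), the map $A\mapsto A\circ A$ is continuous from $(\MAlg(\mathcal R,M),\tau_m)$ to $(\MAlg(\mathcal R,M),\tau_m^l)$. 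Second, I would combine this with the identity $A\mapsto A$ viewed into the \emph{usual} topology on the second coordinate, obtaining a continuous map
\[
\MAlg(\mathcal R,M)\to (\MAlg(\mathcal R,M),\tau_m^l)\times (\MAlg(\mathcal R,M),\tau_m),\qquad A\mapsto (A\circ A,\ A),
\]
where the domain carries the usual ($d_m$) topology throughout. Third, by Lemma \ref{lem: inclusion is closed} the set of pairs $(C,A)$ with $C\subseteq A$ is closed in $(\MAlg(\mathcal R,M),\tau_m^l)\times(\MAlg(\mathcal R,M),\tau_m)$, so its preimage under the above continuous map — which is exactly $\{A\colon A\circ A\subseteq A\}$ — is closed in the Polish topology on $\MAlg(\mathcal R,M)$. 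This gives the proposition.

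The main subtlety to get right — more a bookkeeping point than a genuine obstacle — is the asymmetric roles of the two topologies: composition is only lower-continuous, so $A\circ A$ must sit on the \emph{lower} side of the inclusion in Lemma \ref{lem: inclusion is closed}, which is precisely the side that lemma allows to carry the lower topology, while the target $A$ sits on the usual-topology side. One should also note that $m$ being finite is what makes Lemma \ref{lem: inclusion is closed} and Lemma \ref{lem: composition continuous} applicable here, and that we may freely replace $M$ by the equivalent finite measure $m$ throughout since they induce the same Hausdorff topology on $\MAlg(\mathcal R,M)$ and, by Lemma \ref{lem: lower topology on finite measure}, the same lower topology on finite-measure sets (and elements of $[[\mathcal R]]$, hence the pieces appearing in the formula for $\circ$, have $M$-measure at most $1$). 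Combined with the earlier fact that graphings form a closed set and with closedness of reflexivity via Lemma \ref{lem: inclusion is closed}, intersecting the three closed sets recovers that $\Sub(\mathcal R)$ is closed, giving the promised second proof of Theorem \ref{thm: sub R is Polish}.
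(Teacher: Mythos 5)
Your proof is correct and is exactly the paper's argument: the paper also deduces the proposition directly from Lemma \ref{lem: composition continuous} (lower-continuity of $\circ$) together with Lemma \ref{lem: inclusion is closed}, using implicitly that the $d_m$-topology refines the lower topology so that $A\mapsto(A\circ A,A)$ is continuous into the mixed product. You have merely spelled out the bookkeeping that the paper leaves to the reader.
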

\begin{proof}
	This follows directly from the previous lemma along with Lem. \ref{lem: inclusion is closed}.
\end{proof}

\begin{theorem}
	$\Sub(\mathcal R)$ is a closed subset of $\MAlg(\mathcal R,M)$ for the $d_m$-topology.
\end{theorem}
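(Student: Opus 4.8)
The plan is to exhibit $\Sub(\mathcal R)$ as the intersection of the three closed sets singled out just above, namely the set of $\mathcal S\in\MAlg(\mathcal R,M)$ containing $\Delta_X$ (reflexivity), the set of graphings, i.e.\ of fixed points of the flip $\sigma$ (symmetry), and the set of $\mathcal S$ with $\mathcal S\circ\mathcal S\subseteq\mathcal S$ (transitivity). These are closed by Lemma \ref{lem: inclusion is closed}, by Proposition \ref{prop:graphing is polish}, and by the proposition immediately above, respectively; hence so is their intersection, which is then Polish as a closed subset of the Polish space $\MAlg(\mathcal R,M)$.

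The remaining point to verify is that this intersection really equals $\Sub(\mathcal R)$, i.e.\ that an element of $\MAlg(\mathcal R,M)$ which is reflexive, symmetric and transitive in the above sense admits a genuine Borel equivalence relation as a representative. One direction is clear: if $S$ is an honest equivalence relation representing $\mathcal S$, then $\Delta_X\subseteq S$, $\sigma(S)=S$ and $S\circ S\subseteq S$ hold set-theoretically, and since $\circ$ on the measure algebra extends the set-theoretic operation (this is where non-singularity enters), $\mathcal S$ satisfies the three conditions. For the converse I would first normalize: given a representative $A$, replace it by $(A\cup\Delta_X)\cap\sigma(A\cup\Delta_X)$, which represents the same element, genuinely contains $\Delta_X$, is genuinely symmetric, and still satisfies $M((A\circ A)\setminus A)=0$. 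Then take the transitive closure $\widetilde A=\bigcup_{n\geq 1}A^{\circ n}$, a genuine equivalence relation with $A\subseteq\widetilde A$. An induction on $n$ gives $M(A^{\circ n}\bigtriangleup A)=0$: at the step $n\to n+1$ one writes $A^{\circ(n+1)}=A\circ A^{\circ n}$, uses that $A^{\circ n}$ and $A$ coincide in $\MAlg(\mathcal R,M)$ by the induction hypothesis, that $\circ$ is well defined on $\MAlg(\mathcal R,M)$, and finally that $M((A\circ A)\setminus A)=0$. It follows that $M(\widetilde A\bigtriangleup A)=0$, so $\widetilde A$ represents $\mathcal S$, as desired.

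The substantive difficulty lies entirely in handling transitivity, and it has already been absorbed into the preparatory lemmas: the composition map $(A,B)\mapsto A\circ B$ is badly discontinuous for the metric $d_m$, so one cannot argue with it directly. The device is to pass to the lower topology, for which $\circ$ is continuous (Lemma \ref{lem: composition continuous}) while $d_m$ still refines it, and then to invoke Lemma \ref{lem: inclusion is closed}, which is tailored to mix the lower topology on the first coordinate with the $d_m$-topology on the second, to conclude that $\{\mathcal S\colon \mathcal S\circ\mathcal S\subseteq\mathcal S\}$ is $d_m$-closed. With that groundwork done, only the bookkeeping above remains, so I expect no further obstacle.
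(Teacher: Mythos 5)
Your proof is correct and follows essentially the same route as the paper: $\Sub(\mathcal R)$ is written as the intersection of the reflexivity, symmetry and transitivity conditions, which are closed by Lemma \ref{lem: inclusion is closed}, by Proposition \ref{prop:graphing is polish}, and by the proposition on $A\circ A\subseteq A$ respectively. Your additional check that this intersection actually coincides with $\Sub(\mathcal R)$ --- normalizing a representative and showing the transitive closure does not grow in $M$-measure --- is a point the paper leaves implicit, and your handling of it is sound.
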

\begin{proof}
	We already observed that being reflexive exactly means containing $\Delta_X$ and thus is a closed condition by Lemma \ref{lem: inclusion is closed}.
	The fact that symmetry is a closed condition is a reformulation of the fact that subgraphs form a closed set (Proposition \ref{prop:graphing is polish}). 
	Finally transitivity is a closed condition by the previous proposition. 
	So $\Sub(\mathcal R)$ is the intersection of three closed subsets, hence closed itself.
\end{proof}

We also have the following nice consequence of our work on the lower topology, which generalizes Proposition 19.1 from \cite{kechrisSpaceMeasurepreservingEquivalence}.

\begin{proposition} \label{prop: continuity of generating}
	The map which takes a measurable subset of $\mathcal R$ to the equivalence relation it generates is continuous in the lower topology, in particular it is Baire class one for the strong topology (induced by $d_m$). 
\end{proposition}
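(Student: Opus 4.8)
The plan is to express the map $A \mapsto \langle A\rangle$ (where $\langle A\rangle$ denotes the equivalence relation generated by $A$) as a countable union, starting from $A$, of compositions and intersections with fixed graphs, so that the continuity for the lower topology drops out of Lemmas \ref{lem: intersection is lower continous}, \ref{lem: countable union is lower continous} and \ref{lem: composition continuous}. Concretely, first replace $A$ by its symmetrization $A \cup \sigma(A) \cup \Delta_X$; since $\sigma$ is a homeomorphism for the lower topology (it quasi-preserves $M$, hence $m$) and since union with the fixed set $\Delta_X$ is lower continuous, this preliminary step is lower continuous. So we may assume $A$ is a graphing. Then $\langle A\rangle = \bigcup_{n\in\N} A^{\circ n}$, where $A^{\circ n}$ is the $n$-fold $\circ$-composition of $A$ with itself (with $A^{\circ 0}=\Delta_X$): indeed $(x,y)\in\langle A\rangle$ iff there is a finite $A$-path from $x$ to $y$, which is exactly membership in some $A^{\circ n}$. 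Each $A \mapsto A^{\circ n}$ is lower continuous by iterating Lemma \ref{lem: composition continuous}, and the countable union is lower continuous by Lemma \ref{lem: countable union is lower continous} (applied with the finite measure $m$). Composing these finitely/countably many lower-continuous operations gives lower continuity of $A\mapsto\langle A\rangle$.

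For the second assertion, recall from Lemma \ref{lem: baire class one lower} that the identity map from $(\MAlg(\mathcal R, M), \tau_m^l)$ to $(\MAlg(\mathcal R, M), \tau_m)$ is Baire class one. The generating map, viewed as a map from the $d_m$-topology to the $d_m$-topology, factors as: identity from $\tau_m$ to $\tau_m^l$ (continuous, since $\tau_m$ refines $\tau_m^l$ — every lower-$m$-neighborhood contains a $d_m$-ball), then the lower-continuous generating map into $\tau_m^l$, then the Baire-class-one identity $\tau_m^l \to \tau_m$. A continuous map followed by a continuous map followed by a Baire class one map is Baire class one (pullback of an $F_\sigma$ is $F_\sigma$), so the generating map is Baire class one for the strong topology. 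One should note that the generating map indeed lands in $\Sub(\mathcal R)$, which is closed, so no issue of well-definedness of the target arises.

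The only mild subtlety — and the point I would double-check most carefully — is that the countable-union step really does use a \emph{finite} measure (Lemma \ref{lem: countable union is lower continous} fails for $\sigma$-finite $M$), which is why one works with $m$ throughout rather than $M$; since the lower $m$-topology and lower $M$-topology agree on $\MAlg_f$ by Lemma \ref{lem: lower topology on finite measure} this causes no loss, but it is worth stating that all the sets $A^{\circ n}$ and the generated relation have finite $m$-measure (they do, since $m$ is finite and everything sits inside $\mathcal R$). Apart from that bookkeeping, each ingredient is already in place, so the proof is essentially an assembly of the preceding lemmas; no genuinely new obstacle arises. I would also remark that this recovers \cite[Prop.~19.1]{kechrisSpaceMeasurepreservingEquivalence} since in the p.m.p.\ case $M$ is a multiple of a finite measure on each graph and the strong topology is the relevant one.
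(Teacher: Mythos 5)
Your proof is correct and follows essentially the same route as the paper, which writes the generated relation as $\bigcup_{n\in\N}(\Delta_X\cup A\cup\sigma(A))^{\circ n}$ and invokes the continuity of $\sigma$, of composition and of countable unions for the lower topology, then Lemma \ref{lem: baire class one lower} for the Baire class one statement. Your extra bookkeeping about finiteness of $m$ and the factorization through the continuous identity $\tau_m\to\tau_m^l$ is accurate but not a different argument.
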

\begin{proof}
	This map associates to a $A\subseteq \mathcal R$ the equivalence relation $\bigcup_{n\in\N} (\Delta_X\cup A\cup \sigma(A))^{\circ n}$, 
	so it is a continuous map by the continuity of $\sigma$,  of countable unions (Lemma \ref{lem: countable union is lower continous}) 
	and of composition (Lemma \ref{lem: composition continuous}). 
	The statement about the strong topology then follows directly from Lemma \ref{lem: baire class one lower}.
\end{proof}

We can finally obtain the following strengthening of Proposition 4.27 from \cite{kechrisSpaceMeasurepreservingEquivalence}.

\begin{corollary}
	The map which takes a sequence $(\mathcal R_n)$ of subequivalence relations of $\mathcal R$ to the equivalence relation $\bigvee_n \mathcal R_n$ they generate is continuous in the lower topology, in particular it is Baire class one for the strong topology.
\end{corollary}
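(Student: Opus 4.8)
The plan is to realize the map $(\mathcal R_n)_n\mapsto\bigvee_n\mathcal R_n$ as a composition of two maps that we have already shown to be lower-continuous, and then to invoke Lemma \ref{lem: baire class one lower} in order to pass to the strong topology. Recall that $\bigvee_n\mathcal R_n$ is by definition the equivalence relation generated by the measurable subset $\bigcup_n\mathcal R_n$ of $\mathcal R$. So, working everywhere with the lower $m$-topology $\tau_m^l$ on $\MAlg(\mathcal R,M)$ for our fixed finite measure $m$ equivalent to $M$ (and with the corresponding product topology on $\MAlg(\mathcal R,M)^{\N}$), the map in question factors as
\[
(\mathcal R_n)_n\ \longmapsto\ \bigcup_n\mathcal R_n\ \longmapsto\ \bigvee_n\mathcal R_n ,
\]
where the first arrow is the countable union map, continuous by Lemma \ref{lem: countable union is lower continous}, and the second arrow is the generating map $A\mapsto\la A\ra$, continuous by Proposition \ref{prop: continuity of generating}. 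A composition of continuous maps being continuous, this proves the first assertion. The same factorization works verbatim if one replaces the subequivalence relations $\mathcal R_n$ by arbitrary measurable subsets $A_n\subseteq\mathcal R$ (the symmetrization needed to pass to the generated equivalence relation is already taken care of inside Proposition \ref{prop: continuity of generating}); this added generality is where the strengthening over \cite[Prop.~4.27]{kechrisSpaceMeasurepreservingEquivalence} comes from.

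For the Baire class one statement I would argue exactly as at the end of the proof of Proposition \ref{prop: continuity of generating}. On each factor the metric topology $\tau_m$ refines $\tau_m^l$, since a basic lower neighborhood $\{B: m(A\setminus B)<\epsilon\}$ is $d_m$-open (if $d_m(B,B')<\delta$ then $m(A\setminus B')\leq m(A\setminus B)+d_m(B,B')$); consequently the product of the strong topologies on $\MAlg(\mathcal R,M)^{\N}$ refines the product of the lower topologies. By Lemma \ref{lem: baire class one lower}, the identity map $(\MAlg(\mathcal R,M),\tau_m^l)\to(\MAlg(\mathcal R,M),\tau_m)$ is Baire class one, so composing it with the lower-continuous map obtained above shows that $(\mathcal R_n)_n\mapsto\bigvee_n\mathcal R_n$ is Baire class one from the product lower topology to $\tau_m$. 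Since refining the topology of the domain preserves being Baire class one — a preimage of a $\tau_m$-open set is $F_\sigma$ for the product lower topology, hence a fortiori for the finer product strong topology — we conclude that the map is Baire class one for the strong topology, as claimed.

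The argument is essentially bookkeeping on top of the results already in hand, and I expect no genuine obstacle. The one point deserving a line of care is the final transfer between topologies: one has to note that a $\tau_m^l$-$F_\sigma$ set stays $F_\sigma$ for the (finer) product strong topology and that the product strong topology really does refine the product lower topology, both of which reduce to the elementary observation that a $\tau_m^l$-open set is $\tau_m$-open. One should also keep in mind that Lemma \ref{lem: countable union is lower continous} is stated only for a finite measure, which is precisely why the whole computation is run with the fixed finite $m$ rather than with $M$ itself.
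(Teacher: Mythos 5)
Your proof is correct and follows essentially the same route as the paper: factor the map through the countable union map (Lemma \ref{lem: countable union is lower continous}) and the generating map (Proposition \ref{prop: continuity of generating}), both continuous for the lower topology, and then transfer to the strong topology via Lemma \ref{lem: baire class one lower}. The paper states this in one line; your additional care about why the product strong topology refines the product lower topology and why $F_\sigma$ sets persist under refinement is sound but not a departure from the paper's argument.
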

\begin{proof}
	This is a direct consequence of the previous result along with the fact that taking countable unions is continuous in the lower topology  (Lemma \ref{lem: countable union is lower continous}).
\end{proof}

\section{Dense orbits for hyperfinite subequivalence relations in the p.m.p.\ case}\label{sec: dense orbit}

In the present section, we exclusively work in the probability measure preserving setup, so $\mathcal R$ is a p.m.p.\ equivalence relation on $(X,\mu)$.
Recall that $\Aut(\mathcal R)$ is the group of all $T\in\Aut(X,\mu)$ such that $T\times T\;(\mathcal R)=\mathcal R$. 
Observe that $\Aut(\mathcal R)$ acts on $\MAlg(\mathcal R,M)$ by $T\cdot (x,y)= (T(x),T(y))$ and preserves the measure $M$. 
The group $\Aut(\mathcal R)$ contains the full group of $\mathcal R$, and it is a Polish group for the topology induced by the group $\Aut(\mathcal R,M)$ of all measure-preserving bijections of $(\mathcal R,M)$ (see \cite[Prop.\ 6.3]{kechrisGlobalaspectsergodic2010} where $\Aut(\mathcal R)$ is denoted $N[\mathcal R]$). The trivial subequivalence relation will play an important role; we denote it by 
$\Delta_X=\{(x,x)\colon x\in X\}$ (given our identification of full groups elements to their graphs, we could also write it as the identity map $\id_X$).

\subsection{Preliminaries on conditional measures and index}\label{sec: cond meas and index}

In order to work in the p.m.p.\ setup, we have to understand which sets can be taken to other sets by (pseudo) full group elements up to measure zero.
This is done here through the concept of \emph{conditional measures}, a low-tech version of the ergodic decomposition which was already present in Dye's founding paper \cite{dyeGroupsMeasurePreserving1959}. We already used them in a previous paper on non-ergodic p.m.p.\ equivalence relations, see \cite[Sec.\ 2]{lemaitrefullgroupsnonergodic2016}, but we recently gave a more detailed exposition for general full groups in a joint work with Slutsky  which we use here as a reference (see Appendix D in \cite{lemaitreL1FullGroups2025}).

\begin{definition}
	Let $\mathcal R$ be a p.m.p.\ equivalence relation on $(X,\mu)$, denote by $M_{\mathcal R}$ the closed subalgebra of $(X,\mu)$ consisting of $[\mathcal R]$-invariant subsets,
	and by $\mathbb E_{\mathcal R}$ the projection  $\LL^2(X,\mu)\to\LL^2(X,M_{\mathcal R},\mu)$.
	Given $A\in\MAlg(X,\mu)$, its \textbf{$M_{\mathcal R}$-conditional measure} $\mu_{\mathcal R}$ is the $M_{\mathcal R}$-measurable function
	\[
	\mu_{\mathcal R}(A)=\mathbb E_{M_{\mathcal R}}(\chi_A).
	\]
\end{definition}

It can be checked that $\mu_{\mathcal R}$ takes values in $[0,1]$, satisfies the usual axioms of a measure, and that elements of $[\mathcal R]$ preserve $\mu_{\mathcal R}$ (see \cite[Prop.\ D.6]{lemaitreL1FullGroups2025}).
Say that $\mathcal R$ is \textbf{ergodic} when $M_{\mathcal R}=\{\emptyset,X\}$, then $\mathcal R$ is ergodic iff $\mu_{\mathcal R}=\mu$.
By \cite[Prop.\ D.10]{lemaitreL1FullGroups2025}, we moreover have:
\begin{lemma}\label{lem: full group orbit on malg}
	Let $A,B\in\MAlg(X,\mu)$, let $\mathcal R$ be a p.m.p.\ equivalence relation on $(X,\mu)$. The following are equivalent:
	\begin{enumerate}[(i)]
		\item there is $\varphi\in[[\mathcal R]]$ such that $\dom\varphi=A$ and $\rng\varphi=B$;
		\item $\mu_{\mathcal R}(A)=\mu_{\mathcal R}(B)$.
	\end{enumerate}
\end{lemma}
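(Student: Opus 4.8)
The plan is to prove the two implications separately, the direction (i)$\Rightarrow$(ii) being routine and the direction (ii)$\Rightarrow$(i) being the substantial one. For (i)$\Rightarrow$(ii): if $\varphi\in[[\mathcal R]]$ has $\dom\varphi=A$ and $\rng\varphi=B$, then since elements of $[[\mathcal R]]$ are pieced together from at most countably many full-group elements (write $\mathcal R=\mathcal R_\Gamma$ and partition $A$ into the Borel sets $A_\gamma=\{x\in A\colon \varphi(x)=\gamma x\}$, refining to make them disjoint), and since each $T\in[\mathcal R]$ preserves $\mu_{\mathcal R}$ by \cite[Prop.\ D.6]{lemaitreMathrmFullGroups2023}, we get $\mu_{\mathcal R}(A)=\sum_\gamma\mu_{\mathcal R}(A_\gamma)=\sum_\gamma\mu_{\mathcal R}(\gamma A_\gamma)=\mu_{\mathcal R}(B)$, using countable additivity of the conditional measure and disjointness of the images. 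Here one should be a little careful that $\mu_{\mathcal R}$ is a function rather than a number, so the equalities are $M_{\mathcal R}$-a.e.\ equalities, and monotone convergence justifies the interchange of the sum with the conditional expectation.

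For (ii)$\Rightarrow$(i), which I expect to be the main obstacle, the idea is to reduce to the ergodic case via the ergodic decomposition and then, on each ergodic piece, use a back-and-forth / exhaustion argument. Concretely: assume $\mu_{\mathcal R}(A)=\mu_{\mathcal R}(B)$. Working over the $\sigma$-algebra $M_{\mathcal R}$, the problem is fiberwise; one wants to build $\varphi$ by a maximality argument. Consider the poset of partial elements $\psi\in[[\mathcal R]]$ with $\dom\psi\subseteq A$ and $\rng\psi\subseteq B$; by a standard exhaustion (take a sequence whose domains increase in measure toward the supremum, glue them together using the fact that $[[\mathcal R]]$ is closed under countable "disjoint" gluing) one gets a maximal such $\psi$. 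It remains to see that $\dom\psi=A$ and $\rng\psi=B$ up to measure zero. If not, then $A\setminus\dom\psi$ and $B\setminus\rng\psi$ both have positive measure on some common piece of $M_{\mathcal R}$ (because $\mu_{\mathcal R}(A)=\mu_{\mathcal R}(B)$ forces $\mu_{\mathcal R}(A\setminus\dom\psi)=\mu_{\mathcal R}(B\setminus\rng\psi)$), and on that piece one can find, inside a single $\mathcal R$-class meeting both leftover sets in positive conditional measure, a further partial element extending $\psi$ — contradicting maximality. The genuine content is that on an ergodic p.m.p.\ relation, any two positive-measure sets have a nonempty-domain partial element of $[[\mathcal R]]$ between them; this is essentially Dye's lemma, and is exactly what \cite[Appendix D]{lemaitreMathrmFullGroups2023} is set up to deliver.

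Since the statement is attributed to \cite[Prop.\ D.10]{lemaitreMathrmFullGroups2023}, in the paper itself the cleanest route is simply to \emph{cite} that reference for the nontrivial direction and only spell out the easy direction (i)$\Rightarrow$(ii) if at all. If a self-contained argument is wanted, the exhaustion argument above is the way to go; the one point to handle with care is measurability of the gluing over $M_{\mathcal R}$ — i.e.\ that the "fiberwise" construction can be performed Borel-uniformly — which is precisely the technical service provided by the conditional-measure formalism, so it should not require any genuinely new idea beyond what is already invoked.
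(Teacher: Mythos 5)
Your proposal is correct, but note that the paper itself gives no proof of this lemma: it is stated as a direct quotation of \cite[Prop.\ D.10]{lemaitreMathrmFullGroups2023}, so the "paper's approach" is simply a citation, exactly as you suggest in your last paragraph. Your sketch of a self-contained argument is the standard one and is sound: the direction (i)$\Rightarrow$(ii) by cutting $\varphi$ into countably many pieces on which it agrees with elements of a generating group and using countable additivity plus $[\mathcal R]$-invariance of $\mu_{\mathcal R}$; the direction (ii)$\Rightarrow$(i) by exhaustion to a maximal partial element $\psi$ with $\dom\psi\subseteq A$, $\rng\psi\subseteq B$, followed by the observation that $\mu_{\mathcal R}(A\setminus\dom\psi)=\mu_{\mathcal R}(B\setminus\rng\psi)$, so that on the invariant set where this is positive almost every class meets both leftover sets, whence some generator $\gamma$ satisfies $\mu\bigl((A\setminus\dom\psi)\cap\gamma\inv(B\setminus\rng\psi)\bigr)>0$ and $\psi$ can be extended, a contradiction. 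Two small points of hygiene: in the exhaustion, the approximating sequence of partial elements is not automatically nested, so one should explicitly trim each $\psi_{n+1}$ to $\bigl(\dom\psi_{n+1}\setminus\dom\psi_n'\bigr)\cap\psi_{n+1}\inv\bigl(B\setminus\rng\psi_n'\bigr)$ before gluing; and the phrase "inside a single $\mathcal R$-class" should be replaced by the positive-measure statement above, since single classes are null. Neither affects the validity of the argument.
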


We finally note the following important consequence of aperiodicity (having only infinite classes) for a p.m.p.\ equivalence relation (it is actually a characterization, but we don't need that).

\begin{proposition}[{Maharam's Lemma, see \cite[Thm.\ D.12]{lemaitreL1FullGroups2025}}]
	\label{prop: maharam}
	Let $\mathcal R$ be a p.m.p.\ aperiodic equivalence relation, let $A\in\MAlg(X,\mu)$, let $f:X\to[0,1]$ be an $M_{\mathcal R}$-measurable function such that $0\leq f\leq \mu_{\mathcal R}(A)$. Then there is $B\subseteq A$ such that $\mu_{\mathcal R}(B)=f$.
\end{proposition}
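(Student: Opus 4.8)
The plan is to reduce the statement to the classical Maharam lemma for a single aperiodic measure-preserving transformation, using the observation that cutting a set down by an $M_{\mathcal R}$-measurable proportion is insensitive to the finer invariant structure one works relative to. Since $\mathcal R$ is aperiodic, I would first fix an aperiodic element $T\in[\mathcal R]$; such a $T$ exists for any aperiodic p.m.p.\ equivalence relation by a standard marker argument (see e.g.\ \cite{kechrisTopicsOrbitEquivalence2004}). Write $M_T$ for the $\sigma$-algebra of $T$-invariant subsets of $X$ and $\mu_T=\mathbb E_{M_T}(\chi_{\,\cdot\,})$ for the associated conditional measure (equivalently, $M_T=M_{\mathcal R_T}$ and $\mu_T=\mu_{\mathcal R_T}$ for the hyperfinite subrelation $\mathcal R_T\subseteq\mathcal R$). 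Because every $\mathcal R$-invariant set is a union of $\mathcal R$-classes, hence of $T$-orbits, we have $M_{\mathcal R}\subseteq M_T$, and therefore by the tower property $\mathbb E_{M_{\mathcal R}}\circ\mathbb E_{M_T}=\mathbb E_{M_{\mathcal R}}$; in particular $\mu_{\mathcal R}=\mathbb E_{M_{\mathcal R}}\circ\mu_T$.

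The reduction then runs as follows. Put $g=f/\mu_{\mathcal R}(A)$ on the $M_{\mathcal R}$-set $\{\mu_{\mathcal R}(A)>0\}$ and $g=0$ elsewhere; this $g$ is $M_{\mathcal R}$-measurable with $0\leq g\leq 1$ and $f=g\cdot\mu_{\mathcal R}(A)$, because on $\{\mu_{\mathcal R}(A)=0\}$ one has $f=0$ as well. The function $h:=g\cdot\mu_T(A)$ is then $M_T$-measurable with $0\leq h\leq\mu_T(A)$ (note that $\mu_T(A)=0$ a.e.\ on $\{\mu_{\mathcal R}(A)=0\}$, since $\mathbb E_{M_{\mathcal R}}(\mu_T(A))=\mu_{\mathcal R}(A)$), so the Maharam lemma for the aperiodic transformation $T$ yields a set $B\subseteq A$ with $\mu_T(B)=h$. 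Finally, since $g$ is $M_{\mathcal R}$-measurable it may be pulled out of $\mathbb E_{M_{\mathcal R}}$, and hence
$$\mu_{\mathcal R}(B)=\mathbb E_{M_{\mathcal R}}(\mu_T(B))=\mathbb E_{M_{\mathcal R}}\big(g\cdot\mu_T(A)\big)=g\cdot\mathbb E_{M_{\mathcal R}}(\mu_T(A))=g\cdot\mu_{\mathcal R}(A)=f,$$
which is the conclusion.

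What remains is the base case: the Maharam lemma for a single aperiodic p.m.p.\ transformation $T$, asserting that for $A$ and $M_T$-measurable $0\leq h\leq\mu_T(A)$ there is $B\subseteq A$ with $\mu_T(B)=h$. I would either quote this directly, or derive it from a Rokhlin-tower argument: using that the measure algebra is separable and complete (Proposition \ref{prop: dmu complete separable}) one builds an increasing sequence $B_n\subseteq A$ whose $\mu_T$-masses approach $h$ and sets $B=\bigcup_n B_n$, where at each stage a tall Rokhlin tower for the aperiodic $T$ allows one to extract from the remaining "room" $A\setminus B_n$ a further subset realizing a definite fraction of the deficit $h-\mu_T(B_n)$ without overshooting $h$. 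I expect this extraction step---producing inside a prescribed set a subset of prescribed (small) conditional mass---to be the only genuinely delicate point of the whole proof; everything else is formal manipulation of conditional expectations.
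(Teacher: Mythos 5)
The paper offers no proof of this proposition: it is imported wholesale as Theorem D.12 of \cite{lemaitreMathrmFullGroups2023}, so there is no in-paper argument to compare yours against, and your reduction to the single-transformation case is a legitimate alternative route. The conditional-expectation bookkeeping is correct: an aperiodic $T\in[\mathcal R]$ exists by the standard marker argument; $M_{\mathcal R}\subseteq M_T$ because $T$-orbits are contained in $\mathcal R$-classes; the tower property gives $\mu_{\mathcal R}=\mathbb E_{M_{\mathcal R}}\circ\mu_T$; writing $f=g\cdot\mu_{\mathcal R}(A)$ with $g$ an $M_{\mathcal R}$-measurable $[0,1]$-valued function lets you pull $g$ out of $\mathbb E_{M_{\mathcal R}}$ at the end; and $0\leq h\leq\mu_T(A)$ is immediate from $0\leq g\leq 1$ (your parenthetical about $\mu_T(A)$ vanishing on $\{\mu_{\mathcal R}(A)=0\}$ is not actually needed for that). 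What the reduction buys is that the only remaining content is the classical Maharam lemma for a single aperiodic p.m.p.\ transformation, which is quotable (it goes back to Dye and is the $\mathbb Z$-case of the very result the paper cites). What it does not buy is an elimination of the real difficulty, only a relocation: the extraction step you defer to a Rokhlin tower --- carving out of a prescribed set a subset of prescribed small \emph{conditional} mass, uniformly over the ergodic components --- is essentially the same exhaustion argument one would run for $\mathcal R$ directly, and a plain Rokhlin tower controls $\mu$ rather than $\mu_T$, so one must either use a conditional form of Rokhlin's lemma or work with $M_T$-measurable tower heights. Since you correctly flag this as the one delicate point and propose to quote the classical statement rather than reprove it, the argument is complete as a reduction.
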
 

We finish this section by recalling some important definitions on the index of subequivalence relations.

\begin{definition}
Let $\mathcal R$ be a non-singular equivalence relation on $(X,\mu)$. We say that $\mathcal S\in\Sub(\mathcal R)$
\begin{itemize}
	\item has \textbf{infinite index} in $\mathcal R$ if for almost all $x\in X$, 
	the $\mathcal R$-equivalence class of $x$ contains infinitely many distinct $\mathcal S$-classes.
	\item has \textbf{finite index} in $\mathcal R$ if for almost all $x\in X$, the $\mathcal R$-equivalence class of $x$ is the union of a finite set of $\mathcal S$-classes.
	\item has \textbf{everywhere infinite index} in $\mathcal R$ if for all $A\subseteq X$ such that $\mu(A)>0$, the restriction of $\mathcal S$ to $A$ has infinite index in the restriction of $\mathcal R$ to $A$.
\end{itemize}
\end{definition}

\begin{remark}
	It might not be clear at first sight that having infinite index is not the same as having everywhere infinite index.
	Here is the simplest example of an infinite index $\mathcal S\in\Sub(\mathcal R)$, not everywhere of infinite index: take $\mathcal R$ ergodic, let $A\subseteq X$ of positive non full measure, and let $\mathcal S=\mathcal R_{\restriction A}\sqcup \Delta_{X\setminus A}$. 
\end{remark}

Let us note that when $\mathcal R$ is ergodic, since the number of $\mathcal S$-classes inside the $\mathcal R$-class of $x\in X$ is $\mathcal R$-invariant, it is constant almost everywhere, and hence $\mathcal S$ either has finite or infinite index in $\mathcal R$. Similarly if $\mathcal S$ is ergodic and of infinite index, then it is everywhere of infinite index because the $\mathcal S$-class of almost every $x\in X$ intersects $A$. We finally note the following non-ergodic way of producing everywhere infinite index subequivalence relations.

\begin{lemma}
	Let $\mathcal R_1$ and $\mathcal R_2$ be non-singular aperiodic equivalence relations on respective standard probability spaces $(X_1,\mu_1)$ and $(X_2,\mu_2)$. 
	Then $\mathcal R_1\times \Delta_{X_2}$ is everywhere of infinite index in $\mathcal R_1\times\mathcal R_2$. 
\end{lemma}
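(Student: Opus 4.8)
The plan is to reduce the statement to a fiberwise/pointwise claim. Fix a point $x_2\in X_2$ and an element $(x_1,x_2)$ of the product space. Its $\mathcal R_1\times\mathcal R_2$-class is $[x_1]_{\mathcal R_1}\times[x_2]_{\mathcal R_2}$, while its $\mathcal R_1\times\Delta_{X_2}$-class is $[x_1]_{\mathcal R_1}\times\{x_2\}$. So the number of $(\mathcal R_1\times\Delta_{X_2})$-classes inside the ambient class of $(x_1,x_2)$ is exactly the cardinality of $[x_2]_{\mathcal R_2}$, which is infinite for almost every $x_2$ precisely because $\mathcal R_2$ is aperiodic. This already shows that $\mathcal R_1\times\Delta_{X_2}$ has infinite index; the point is to upgrade this to \emph{everywhere} infinite index, i.e.\ to see that restricting to an arbitrary positive-measure set $A\subseteq X_1\times X_2$ does not destroy this.

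First I would take $A\subseteq X_1\times X_2$ with $(\mu_1\otimes\mu_2)(A)>0$. By Fubini, for a positive-measure set of $x_1\in X_1$ the fiber $A_{x_1}=\{x_2\colon(x_1,x_2)\in A\}$ has positive $\mu_2$-measure. The key step is then: for almost every such $x_1$ and almost every $x_2\in A_{x_1}$, the intersection $[x_2]_{\mathcal R_2}\cap A_{x_1}$ is infinite. This is a standard consequence of aperiodicity of $\mathcal R_2$ together with non-singularity: if $\mathcal R_2=\mathcal R_{\Gamma_2}$ for a countable group $\Gamma_2$, then the set of $x_2$ whose $\mathcal R_2$-class meets a fixed positive-measure set $B\subseteq X_2$ in only finitely many points is $\mathcal R_2$-invariant and, if it had positive measure, one could use a maximal-type / exhaustion argument to produce a positive-measure $\mathcal R_2$-invariant subset on which $\mathcal R_{2\restriction B}$ is finite, contradicting aperiodicity of $\mathcal R_2$ restricted to that invariant piece (aperiodicity passes to restrictions to invariant sets, and to positive-measure subsets one sees infinitely many class representatives return). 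Applying this with $B=A_{x_1}$ for each relevant $x_1$ (and invoking Fubini again to handle the null sets uniformly in $x_1$) gives the fiberwise conclusion.

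Once that is in hand, I would assemble the pieces: for almost every $(x_1,x_2)\in A$ with $x_1$ in the good set, the points $(x_1,z)$ with $z\in[x_2]_{\mathcal R_2}\cap A_{x_1}$ all lie in $A$, lie in the single $(\mathcal R_1\times\mathcal R_2)_{\restriction A}$-class of $(x_1,x_2)$, and lie in pairwise distinct $(\mathcal R_1\times\Delta_{X_2})_{\restriction A}$-classes; since there are infinitely many such $z$, the ambient restricted class of $(x_1,x_2)$ splits into infinitely many restricted subclasses. As $A$ was an arbitrary positive-measure set, this is exactly everywhere infinite index.

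The main obstacle is the fiberwise infiniteness claim in the second paragraph — i.e.\ checking that aperiodicity of $\mathcal R_2$ survives restriction to an arbitrary positive-measure subset $B$ in the strong sense that almost every $\mathcal R_2$-class meets $B$ infinitely often. For a p.m.p.\ $\mathcal R_2$ this is classical (it follows from the Poincaré recurrence / conditional-measure machinery recalled in Section~\ref{sec: cond meas and index}, e.g.\ from Maharam's Lemma applied inside $B$), but here $\mathcal R_2$ is only assumed non-singular, so one should instead run a direct Feldman--Moore argument with a countable generating group, using only that positive-measure sets cannot be swept off themselves by a non-singular aperiodic relation; alternatively one can deduce it from Lemma~\ref{lem: full group orbit on malg} and Proposition~\ref{prop: maharam} in the p.m.p.\ case and note that the non-singular case needed here only ever involves $\mathcal R_2$, so it suffices to treat it by the same conditional-measure tools after passing to an equivalent invariant measure on each ergodic component.
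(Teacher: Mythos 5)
Your argument follows the paper's proof exactly: apply Fubini to get that for almost every $(x_1,x_2)\in A$ the fiber $A_{x_1}$ has positive $\mu_2$-measure, invoke the recurrence statement that $[x_2]_{\mathcal R_2}$ then meets $A_{x_1}$ in an infinite set, and conclude by noting that the points $(x_1,z)$ for $z\in[x_2]_{\mathcal R_2}\cap A_{x_1}$ lie in a single $(\mathcal R_1\times\mathcal R_2)_{\restriction A}$-class but in pairwise distinct $(\mathcal R_1\times\Delta_{X_2})_{\restriction A}$-classes. The paper's proof is precisely this, with the recurrence step asserted in a single clause and no further justification. You are right to isolate that step as the crux, and your p.m.p.\ justification (an aperiodic p.m.p.\ relation admits no positive-measure set meeting almost every class only finitely often, via a partial transversal and the conditional-measure tools of Section~\ref{sec: cond meas and index}) is sound; this covers the only use of the lemma in the paper, namely Proposition~\ref{prop: everywhere infinite index from product}, where everything is p.m.p.

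However, your proposed repair in the general non-singular case does not go through, and in fact the recurrence claim is false there: the principle that positive-measure sets cannot be swept off themselves fails for smooth aperiodic non-singular relations. Concretely, let $\mathcal R_2$ be generated by $\Z$ acting by translations on $\R$, equipped with a probability measure equivalent to Lebesgue measure; this is non-singular and aperiodic, yet $B=[0,1)$ has positive measure and meets every class in exactly one point, so $\mathcal R_{2\restriction B}=\Delta_B$. Taking $A=X_1\times[0,1)$, the restrictions of $\mathcal R_1\times\mathcal R_2$ and of $\mathcal R_1\times\Delta_{X_2}$ to $A$ coincide, so the statement in this generality requires an extra hypothesis (conservativity of $\mathcal R_2$, or the p.m.p.\ assumption). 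This is a gap you share with the paper's own proof rather than one you introduced, but your exhaustion sketch does not close it: exhibiting an invariant piece on which $\mathcal R_{2\restriction B}$ is finite contradicts nothing, since it is only the restriction to $B$, and not $\mathcal R_2$ itself, that is finite there, and the parenthetical recurrence claim you lean on ("infinitely many class representatives return") is exactly what fails in the example above.
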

\begin{proof}
	Let $A\subseteq X_1\times X_2$ of positive measure, by Fubini's theorem for almost all $(x_1,x_2)\in A$ 
	the vertical section $A_{x_1}=\{x'_2\in X_2\colon (x_1,x'_2\in A)\}$ has positive measure, 
	and hence by aperiodicity the $\mathcal R_2$-class of $x_2$ intersects $A_{x_1}$ in an infinite set. 
	This implies that the $(\mathcal R_1\times \mathcal R_2)_{\restriction A}$-class of $(x_1,x_2)$ contains infinitely many $(\mathcal R_1\times \Delta_{X_2})_{\restriction A}$ classes.
\end{proof}

\begin{proposition}\label{prop: everywhere infinite index from product}
	The equivalence relation $\mathcal S$ from Example \ref{ex: diffuse example} is everywhere of infinite index in $\mathcal R_0$.
\end{proposition}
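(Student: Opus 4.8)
The plan is to realize the equivalence relation $\mathcal S$ from Example~\ref{ex: diffuse example} as a product of the form covered by the preceding lemma, so that everywhere infinite index is automatic. Recall $X=\{0,1\}^{\N}$ with the Bernoulli measure $\mu$, that $\mathcal R_0$ is the cofinite (tail) equivalence relation, and that $(x_n)\mathrel{\mathcal S}(y_n)$ iff $x_{2n}=y_{2n}$ for all large enough $n$. Split the coordinates into even and odd ones: the map $(x_n)_{n\in\N}\mapsto\bigl((x_{2n})_{n\in\N},(x_{2n+1})_{n\in\N}\bigr)$ is a measure-preserving isomorphism from $(X,\mu)$ onto $(X_1,\mu_1)\times(X_2,\mu_2)$, where each $(X_i,\mu_i)$ is again $\{0,1\}^{\N}$ with the Bernoulli measure.

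\textbf{Key steps.} First I would check that under this coordinate-splitting isomorphism, $\mathcal R_0$ is carried to $\mathcal R_1\times\mathcal R_2$, where $\mathcal R_1$ and $\mathcal R_2$ are each the cofinite equivalence relation on $\{0,1\}^{\N}$: indeed two sequences agree cofinitely on all coordinates iff they agree cofinitely on even coordinates \emph{and} cofinitely on odd coordinates. Second, I would check that $\mathcal S$ is carried to $\mathcal R_1\times\Delta_{X_2}$: the condition $x_{2n}=y_{2n}$ for all large $n$ says exactly that the even parts are $\mathcal R_1$-equivalent and places no constraint on the odd parts. Third, I would note that the cofinite equivalence relation on $\{0,1\}^{\N}$ is p.m.p.\ and aperiodic (every class is countably infinite, being a coset of the group $\bigoplus_{\N}\Z/2\Z$ acting by coordinatewise addition), so $\mathcal R_1$ and $\mathcal R_2$ satisfy the hypotheses of the previous lemma. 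Applying that lemma gives that $\mathcal R_1\times\Delta_{X_2}$ is everywhere of infinite index in $\mathcal R_1\times\mathcal R_2$, and transporting back along the isomorphism yields that $\mathcal S$ is everywhere of infinite index in $\mathcal R_0$. Aperiodicity of $\mathcal S$ is likewise immediate since $\mathcal S$ corresponds to $\mathcal R_1\times\Delta_{X_2}$ and $\mathcal R_1$ is aperiodic.

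\textbf{Main obstacle.} There is no serious obstacle here; the only thing requiring care is the bookkeeping of the coordinate-splitting isomorphism and the verification that it intertwines the relevant equivalence relations correctly (in particular that "cofinite on all coordinates" decomposes as the product relation rather than something coarser or finer). Once that identification is in place, the statement is an immediate corollary of the lemma, so the proof is essentially a one-line reduction.
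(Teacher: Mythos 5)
Your proposal is correct and is essentially identical to the paper's own proof: the paper likewise uses the even–odd coordinate splitting to identify $\mathcal R_0$ with $\mathcal R_0\times\mathcal R_0$ and $\mathcal S$ with $\mathcal R_0\times\Delta_{\{0,1\}^{\N}}$, then invokes the preceding lemma. The extra verifications you flag (that the tail relation factors as the product relation, and aperiodicity of the factors) are exactly the right points to check and are straightforward.
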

\begin{proof}
	The even-odd partition of $\N$ induces a bijection $\{0,1\}^\N\to\{0,1\}^\N\times \{0,1\}^\N$ under which $\mathcal R_0$ becomes $\mathcal R_0\times\mathcal R_0$ and $\mathcal S$ becomes $\mathcal R_0\times \Delta_{\{0,1\}^\N}$, so the result follows from the previous lemma.
\end{proof}
\subsection{Approximating the diagonal}\label{sec: converging to Delta}

Recall that $\Delta_X$ denotes the equality relation on $X$.
We will now characterize equivalence relations whose orbit closure contains $\Delta_X$, mirroring Popa's result on asymptotic orthogonalization of subalgebras of a finite factor \cite[Lem.\ 2.3]{popaAsymptoticOrthogonalizationSubalgebras2019}. 

\begin{theorem}\label{thm: infinite index everywhere characterization}
	Let $\mathcal R$ be an aperiodic p.m.p.\ equivalence relation. Let $\mathcal S\in\Sub(\mathcal R)$ be a subequivalence relation. The following are equivalent:
	\begin{enumerate}[(i)]
		\item \label{item: infinite index} $\mathcal S$ has everywhere infinite index in $\mathcal R$;
		\item \label{item: fullgroup orbit} the closure of the $[\mathcal R]$-orbit of $\mathcal S$ contains $\Delta_X$;
		\item \label{item: Aut orbit} the closure of the $\Aut(\mathcal R)$-orbit of $\mathcal S$ contains $\Delta_X$.
	\end{enumerate}
\end{theorem}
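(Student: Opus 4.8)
The plan is to prove the cycle of implications \eqref{item: infinite index}$\Rightarrow$\eqref{item: fullgroup orbit}$\Rightarrow$\eqref{item: Aut orbit}$\Rightarrow$\eqref{item: infinite index}. The middle implication is immediate since $[\mathcal R]\leq\Aut(\mathcal R)$, so the closure of the $[\mathcal R]$-orbit is contained in the closure of the $\Aut(\mathcal R)$-orbit. For \eqref{item: Aut orbit}$\Rightarrow$\eqref{item: infinite index} I would argue by contraposition: if $\mathcal S$ does not have everywhere infinite index, then on a positive measure set $A$ the restriction $\mathcal S_{\restriction A}$ has finite index in $\mathcal R_{\restriction A}$; one then extracts a finer positive measure piece $A'\subseteq A$ on which $\mathcal R_{\restriction A'}$-classes split into exactly $k$ many $\mathcal S_{\restriction A'}$-classes for some fixed finite $k\geq 1$. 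This is an isomorphism invariant obstruction: any $T\in\Aut(\mathcal R)$ sends $\mathcal S$ to a subequivalence relation $T\cdot\mathcal S$ which also fails to have everywhere infinite index (with the same witnessing data on $T(A')$, since $T$ conjugates everything). The key point is that this failure is detected by an open condition in the strong topology that $\Delta_X$ clearly does \emph{not} satisfy (for $\Delta_X$ every $\mathcal R_{\restriction B}$-class with $B$ of positive measure splits into infinitely many classes by aperiodicity of $\mathcal R$), so $\Delta_X$ cannot be in the orbit closure. Concretely, using Corollary \ref{cor: strong topology} one shows the set of $\mathcal T\in\Sub(\mathcal R)$ such that $\mathcal T\cap\varphi_n$ is ``small'' on $A'$ for many $n$ separates $\Delta_X$ from the whole $\Aut(\mathcal R)$-orbit of $\mathcal S$.

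\textbf{The main implication.} The heart of the matter is \eqref{item: infinite index}$\Rightarrow$\eqref{item: fullgroup orbit}, and this is where I expect the real work. Fix a uniquely generating sequence of moving partial involutions $(\varphi_n)$ as in Proposition \ref{prop: unique moving invol}, so by Corollary \ref{cor: strong topology} it suffices, given $\eps>0$ and $N\in\N$, to find $U\in[\mathcal R]$ such that $M((U\cdot\mathcal S)\cap\varphi_n)<\eps$ for all $n<N$. The idea is an inductive/exhaustion construction: I want to build $U$ (or a sequence of full group elements converging to the desired one) so that $U$ pushes $\mathcal S$-related points ``off'' each graph $\varphi_n$ for $n<N$. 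Concretely, writing $\mathcal S=\mathcal R_\Lambda$ for a countable group $\Lambda$ in the full group, I would try to choose $U$ so that for each generator $\lambda$ of $\Lambda$ (or for each of countably many $\mathcal S$-moves, handled with a summability trick à la Lemma \ref{lem: converging subsequence liminf}) and each $x$, the point $U\lambda U^{-1}(Ux)$ is not one of the finitely many points $\varphi_n(Ux)$, $n<N$. The mechanism for achieving this is the Eisenmann--Glasner high transitivity result \cite[Thm.\ 1.19]{eisenmannGenericIRSFree2016}: a countable dense subgroup of $[\mathcal R]$ acts highly transitively on almost every $\mathcal R$-class. The everywhere-infinite-index hypothesis is exactly what guarantees there is ``enough room'' inside each $\mathcal R$-class — infinitely many $\mathcal S$-cosets — to spread out the image of $\mathcal S$ and dodge the finitely many constraints coming from $\varphi_0,\dots,\varphi_{N-1}$ on any positive measure set; without everywhere infinite index one would get stuck on the piece where the index is finite, since there only finitely many points are $\mathcal S$-related to a given point and one cannot move them all off the graphs.

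\textbf{Carrying out the induction.} In more detail, I would proceed by a back-and-forth style construction along a countable dense subgroup $G\leq[\mathcal R]$, building $U$ as a limit of partial full group elements $U_k$ defined on increasing pieces of $X$, at each stage using Lemma \ref{lem: full group orbit on malg} and Maharam's Lemma (Proposition \ref{prop: maharam}) to make the domains match up measure-theoretically, and using high transitivity to route the finitely many relevant $\mathcal S$-images of each point into $\mathcal R$-related positions lying outside $\varphi_0\cup\cdots\cup\varphi_{N-1}$. The everywhere-infinite-index hypothesis enters both globally (to get positive measure of room everywhere, via an argument like the one in the Lemma before Proposition \ref{prop: everywhere infinite index from product}) and locally at each step. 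The summability condition $\sum_k d_m(U_k\cdot\mathcal S, U_{k+1}\cdot\mathcal S)<\infty$ then ensures convergence and, via Lemma \ref{lem: converging subsequence liminf}, that the limit is genuinely an element of $\Sub(\mathcal R)$ close to $\Delta_X$. The main obstacle, and the step requiring the most care, is organizing this induction so that a commitment made at stage $k$ (routing some point's $\mathcal S$-orbit off the $\varphi_n$'s) is not destroyed at stage $k'>k$ when we extend $U$ elsewhere — this is the familiar tension in back-and-forth arguments, resolved here by the disjointness of the graphs $\varphi_n$ (so the constraints are ``local'' and finite in number per point) together with a careful bookkeeping of which parts of $X$ are already frozen.
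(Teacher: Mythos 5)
Your overall architecture coincides with the paper's: the cycle \eqref{item: infinite index}$\Rightarrow$\eqref{item: fullgroup orbit}$\Rightarrow$\eqref{item: Aut orbit}$\Rightarrow$\eqref{item: infinite index}, the contrapositive for the last implication, and an inductive construction driven by Eisenmann--Glasner high transitivity for the first. But there is a genuine gap in your \eqref{item: Aut orbit}$\Rightarrow$\eqref{item: infinite index}. The separating condition you propose --- that $\mathcal T\cap\varphi_n$ be small \emph{on the fixed set $A'$} --- is not orbit-invariant: $T\in\Aut(\mathcal R)$ moves the witness to $T(A')$, and $T\cdot\mathcal S$ may perfectly well look like $\Delta$ on $A'$ itself. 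What is needed is a single closed condition, violated by $\Delta_X$, that every translate $T\cdot\mathcal S$ satisfies \emph{uniformly}, i.e.\ one depending only on the invariant data (the index $k$ and the measure of the witness), not on its location. The paper manufactures this by a pigeonhole estimate: partition $X$ into $2kN$ pieces of equal conditional measure moved onto one another by $\varphi_1,\dots,\varphi_{2kN}\in[[\mathcal R]]$ with common domain $B$ (Proposition \ref{prop: maharam} plus Lemma \ref{lem: full group orbit on malg}); a counting argument then yields a set $B_0$ of measure at least $\frac1{2kN^2}$ on which two of the points $\varphi_i(x)$ fall in the witness set and are $\mathcal S$-equivalent, whence $\sum_{i<j}M(\varphi_i\inv\varphi_j\cap T\cdot\mathcal S)\geq\frac1{2kN^2}$ for \emph{every} $T\in\Aut(\mathcal R)$, a closed inequality failing for $\Delta_X$. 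Note also that ``not everywhere of infinite index'' is not obviously an open condition (by the theorem itself it is a priori only $F_\sigma$, being the complement of the $G_\delta$ set of $\mathcal S$ whose orbit closure contains $\Delta_X$), so the separating closed set really must be built by hand in this quantitative way.

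For \eqref{item: infinite index}$\Rightarrow$\eqref{item: fullgroup orbit} your plan names the right tools (Proposition \ref{prop: unique moving invol}, Corollary \ref{cor: strong topology}, a countable dense subgroup acting highly transitively on a.e.\ orbit), but the difficulty you focus on --- protecting earlier commitments --- is not the real one: the paper builds a single increasing chain of \emph{partial} isomorphisms $\psi_0\subseteq\psi_1\subseteq\cdots$, so earlier commitments are never revisited, and it gets $M((T\inv\cdot\mathcal S)\cap\varphi_i)=0$ exactly, with no summability argument. The step your sketch omits is the crux: showing that $\psi=\bigcup_n\psi_n$ has \emph{full domain}. That is where both hypotheses are consumed --- measure preservation forces almost every point outside $\dom\psi$ to have its $\mathcal R$-class meet $X\setminus\rng\psi$; everywhere infinite index of $\mathcal S$ restricted to $X\setminus\rng\psi$ supplies, within that class, a target and auxiliary points in pairwise distinct $\mathcal S$-classes avoiding the finitely many constraints; and high transitivity realizes these choices by some element of the dense subgroup, contradicting the minimality of the stage at which an extension became available. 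Without this exhaustion argument the construction could stall on a positive measure set.
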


\begin{proof}
	The implication \eqref{item: fullgroup orbit} $\implies$ \eqref{item: Aut orbit} is clear since $[\mathcal R]\leq \Aut(\mathcal R)$. \\
	
	Let us then check that \eqref{item: Aut orbit} implies \eqref{item: infinite index} by proving the contrapositive: 
	assuming that $\mathcal S$ does not have infinite index everywhere, we need to show that the $\Aut(\mathcal R)$-orbit of $\mathcal S$ does not contain $\Delta_X$.
	
	By assumption, we have a subset $A\subseteq X$ of positive measure such that the restriction $\mathcal S_{\restriction A}$ has finite index in $\mathcal R_{\restriction A}$. Shrinking $A$ further if necessary, we may assume the index of the restriction of $\mathcal S$ to $A$ in the restriction of $\mathcal R$ to $A$ is constant equal to $k$.
	Let $N\in\N$ such that $1/N<\mu(A)$. 
	
	By aperiodicity and Maharam's lemma with $f=\frac 1{2kN}$ (see Proposition \ref{prop: maharam}), we can partition $X$ in $2kN$ pieces of equal $\mathcal R$-conditional measure. Using Lemma \ref{lem: full group orbit on malg}, we thus have $B\subseteq X$ and 
	$\varphi_1,...,\varphi_{2kN}\in[[\mathcal R]]$ with domain $B$
	such that $\varphi_1=\id_B$ and the sets 
	$\varphi_1(B),...,\varphi_{2kN}(B)$ partition $X$. 
	
	Let $B_0$ be the set of $x\in B$ such that there are at least $k+1$ distinct indices $i\in\{1,\dots,2kN\}$ such that $\varphi_i(x)\in A$.
	Write $A_0=\{x\in A\colon \exists i, \varphi_i\inv(x)\in B_0\}$, then since $A_0$ is covered by the disjoint translates of $B_0$ we have $\mu(A_0)\leq 2kN\mu(B_0)$. 
	Letting $A_1=A\setminus A_0$, we again have that $A_1$ is covered by the disjoint translates of $B_1=B\setminus B_0$, but by definition for every $x\in B_1$ there are at most $k$ indices $i\in\{1,\dots,2kN\}$ such that $\varphi(x)\in A_1$, so that $$\mu(A_1)\leq k\mu(B_1)\leq \frac k{2kN}.$$
	
	Since $A=A_0\sqcup A_1$, we then have $\mu(A)\leq \frac k{2kN}+2kN\mu(B_0)$. But $1/N<\mu(A)$ so  $1/2N<2kN\mu(B_0)$ and hence
	$$\mu(B_0)\geq \frac 1{2kN^2}.$$
	Now for all $x\in B_0$, because the index of $\mathcal S_{\restriction A}$ in $\mathcal R_{\restriction A}$ is $k$, we must have two distinct $i,j$ such that $(\varphi_i(x),\varphi_j(x))\in\mathcal S$. In particular,
	$$\sum_{1\leq i< j\leq 2N}M(\varphi_i\inv\varphi_j\cap \mathcal S)\geq \frac 1{2kN^2}.$$
	The latter estimate will actually be valid for every translate of  $\mathcal S$ by $T\in\mathcal \Aut(\mathcal R)$ because $T(A)$ will still satisfy $1/N<\mu(T(A))$, so for every $T\in\Aut(\mathcal R)$, 
	$$\sum_{1\leq i< j\leq 2N}M(\varphi_i\inv\varphi_j\cap T\cdot \mathcal S)\geq \frac 1{2kN^2}.$$
	This inequality defines a closed set of subequivalence relations, but it is not satisfied by $\Delta_X$, which finishes the proof of \eqref{item: Aut orbit}$\Rightarrow$\eqref{item: infinite index}. \\
	
	We finally prove that \eqref{item: infinite index} implies \eqref{item: fullgroup orbit}.
	We fix $\mathcal S\in\Sub(\mathcal R)$ everywhere of infinite index.
	Let us also fix a uniquely generating sequence $(\varphi_k)$ of  moving partial involutions of $\mathcal R$ as provided by Proposition \ref{prop: unique moving invol}. Applying Proposition \ref{prop: exhaustion by finite and topology} to the sequence of $M$-finite measure subsets $\varphi_k$, it suffices to show that given some $k\in\N$ and $\epsilon>0$, we can find $T\in[\mathcal R]$ such that for all $i\in\{1,\dots,k\}$, 
	\[
	M((T\inv\cdot \mathcal S)\cap \varphi_i)<\epsilon.
	\] 
	We will actually do better and construct $T\in [\mathcal R]$ such that for all $i\in\{1,\dots,k\}$, $M((T\inv\cdot \mathcal S)\cap \varphi_i)=0$.
	Observe that it now suffices (and it is necessary) to build $T$ such that that for almost all $x\in \dom\varphi_i$ and all $i\in\{1,\dots,k\}$, 
	\[
	(T(x),T\varphi_i(x))\not\in\mathcal S.
	\]
	Our construction makes a crucial use of a result of Eisenmann and Glasner:
	letting $\Gamma=\{\gamma_n\colon n\in\N\}$ be a dense subgroup of $[\mathcal R]$, 
	by \cite[Prop.~1.19]{eisenmannGenericIRSFree2016} we have that $\Gamma$ acts highly transitively 
	on the orbit of almost every $x\in X$, and we may as well restrict ourselves to the set of all such $x$'s.
	By the definition of high transitivity, this means that
	for all $x\in X$, every partial bijection between finite subsets
	of $\Gamma x$ is the restriction of some $\gamma\in\Gamma$.

	We will now build $T$  as the increasing union of elements $\psi_n$ of the pseudo full group defined inductively as follows.
	
	First, $\psi_0$ is the restriction of $\gamma_0$ to the set of all $x\in X$ such that for all $i\in\{1,\dots,k\}$, we have $(\gamma_0x, \gamma_0\varphi_i(x))\not\in\mathcal S$. 
	Then, assuming $\psi_n$ has been built, we extend it as $\psi_{n+1}$ by letting $$\psi_{n+1}(x)=\gamma_{n+1}x$$ 
	if $x\notin \dom \psi_n$ and $\gamma_{n+1}x\notin \rng \psi_n$ and for all $i\in\{1,\dots,k\}$
	\begin{enumerate}[(a)]
		\item \label{cond:  psi phii defined} if $\varphi_i(x)\in\dom \psi_{n}$ then $(\gamma_{n+1} x,\psi_n\varphi_i(x))\not\in\mathcal S$;
		\item \label{cond:  psi phii undefined} if $\varphi_i(x)\notin\dom \psi_{n}$  then $(\gamma_{n+1} x, \gamma_{n+1}\varphi_i(x))\not\in\mathcal S$.
	\end{enumerate}
	Let $\psi=\bigcup_n \psi_n$, we now prove the following central claim.
	\begin{claim}
	The element of the pseudo full group $\psi$ has full domain. 
	\end{claim}
	\begin{cproof}
	Assume not, observe first that for almost all $x$ in the complement of the domain of $\psi$, the $\mathcal R$-class of $x$ intersects of the complement of the range of $\psi$. Indeed otherwise we have a positive measure $\mathcal R$-invariant set such that $\psi\inv$ takes it into a subset of itself of smaller measure, contradicting that $\mathcal R$ is measure-preserving. 
	
	Now take $x\in X\setminus \dom\psi$ as above. 
	Let $i_1,...,i_l\in\{1,\dots,k\}$ be the indices such that $\psi\varphi_{i_1}(x),\dots, \psi\varphi_{i_l}(x)$ is defined, denote by $j_1\dots j_{k-l}$ the remaining indices.
	Since $\mathcal S$ has infinite index in $X\setminus \rng \psi$, there are $z\in [x]_{\mathcal R} \cap X\setminus \rng \psi$ and pairwise distinct $z_1,\dots,z_{k-l}\in[x]_{\mathcal R}\cap X\setminus \rng \psi $ such that
	\begin{itemize} 
		\item for all $m\in\{1,\dots,l\}$  we have $(z,\psi\varphi_{i_m}(x))\not\in\mathcal S$;
		\item for all $m\in\{1,\dots,k-l\}$ we have $(z,z_m)\not\in\mathcal S$.
	\end{itemize}
	By high transitivity, there is $\gamma\in\Gamma$ such that 
	$$\gamma x=z \text{ and }\forall m\in \{1,\cdots,k-l\}, \quad \gamma\varphi_{j_m}(x)=z_m.$$ 
	This shows that almost all $x\in X\setminus \dom\psi$ there is $n\in\N$ such that 
	$\gamma_{n+1} x\not\in \rng \psi$ and for all $i\in\{1,\dots, k\}$, we have : 
	\begin{enumerate}[(a')]
		\item if $\varphi_i(x)\in \dom \psi $  then $(\gamma_{n+1}x, \psi\varphi_i(x))\not\in\mathcal S$;
		\item if $\varphi_i(x)\notin \dom \psi$  then $(\gamma_{n+1}x, \gamma_{n+1}\varphi_i(x))\not\in\mathcal S$.
	\end{enumerate}
	Let $n$ be the first integer such that the set of $x\in X\setminus \dom \varphi$ satisfying the above conditions is non null. By the definition of $\psi$, this set should be contained in the domain of $\psi_{n+1}$ and hence of $\psi$, a contradiction.\end{cproof}
	
	So $\psi$ is everywhere defined; since $\mathcal R$ is measure-preserving 
	this implies that it belongs to the full group, and we thus rather write it as $T= \psi$.
	We finally check that $T$ is as wanted.

	Let $x\in X$, define $n\geq -1$ as the least integer such that $x\in\dom\psi_{n+1}$, so that 
	$T(x)=\psi_{n+1}(x)=\gamma_{n+1}(x)$ by construction.
	Let $i\in\{1,\dots,k\}$.

	If $\varphi_i(x)\in\dom \psi_n$, then $(\gamma_{n+1}x,\psi_n\varphi_i(x))\not\in\mathcal S$ by Condition \eqref{cond: psi phii defined}, which means that
	$(T(x),T\varphi_i(x))\not\in\mathcal S$. 
	
	If $\varphi_i(x)\notin\dom\psi_n$, let $m$ be the least integer such that $\varphi_i(x)\in \dom \psi_{m+1}$.
	Then $m\geq n$ and $\psi_{m+1}\varphi_i(x)=\gamma_{m+1}\varphi_i(x)$ by construction. There are two possibilites: 
	\begin{itemize}
		\item Either $m=n$, then Condition \eqref{cond: psi phii undefined} guarantees $(\gamma_{n+1}x, \gamma_{n+1}\varphi_i(x))\not\in\mathcal S$ so that $(\gamma_{n+1}x,\psi_{n+1}\varphi_i(x))\not\in\mathcal S$ and hence $(T(x),T\varphi_i(x))\not\in\mathcal S$.
		\item Or  $m>n$, 
		but then $(\gamma_{m+1}\varphi_i(x), \psi_m\varphi_i\varphi_i(x))\not\in\mathcal S$ by Condition \eqref{cond: psi phii defined} applied to $x'=\varphi_i(x)$ and $n'=m$. The fact that $\varphi_i$ is involutive implies that $(\psi_{m+1}\varphi_i(x),\psi_m(x))\not\in\mathcal S$ and hence $(T\varphi_i(x),T(x))\notin\mathcal S$, which is equivalent to $(T(x),T\varphi_i(x))\not\in\mathcal S$.
	\end{itemize}
	Since in all cases we reached the desired conclusion $(T(x),T\varphi_i(x))\not\in\mathcal S$, the proof is finished.
\end{proof}

\subsection{Dense orbits in the space of hyperfinite subequivalence relations}

By definition a CBER is called \textbf{finite} when all its equivalence classes are finite,
and \textbf{hyperfinite} if it can be written as an increasing union of finite Borel subequivalence relations. 
In our measured context, we ignore null sets and thus use the following definition.

\begin{definition}
	A non-singular equivalence relation $\mathcal R$ on $(X,\mu)$ is called \textbf{hyperfinite} 
	when it admits a restriction to a full measure set which is hyperfinite in the above sense.
\end{definition}

Hyperfiniteness can then  be characterized in full group terms as follows: $\mathcal R$ is hyperfinite iff for all $T_1,\dots,T_n\in [\mathcal R]$ and $\epsilon>0$, after throwing away a set $X'$ of measure $\epsilon$, the equivalence relation generated by the restrictions $T_{1\restriction X\setminus X'},..., T_{n\restriction X\setminus X'}$ is finite. 

\begin{remark}
	The above characterization of hyperfiniteness is Dye's original notion of \emph{approximate finiteness} for full groups \cite{dyeGroupsMeasurePreserving1959} (see also \cite[Prop.\ 1.57]{lemaitreGroupesPleinsPreservant2014} or \cite[Lem.\ 10.4]{kechrisTopicsOrbitEquivalence2004} for a proof of this characterization).
\end{remark}

\begin{definition}
Given a non-singular equivalence relation $\mathcal R$ on $(X,\mu)$, let us denote by $\Subhyp(\mathcal R)$ its space of hyperfinite subequivalence relations.
\end{definition}
Using $\liminf$'s as in the proof of Theorem \ref{thm: sub R is Polish} and the fact that the class of  non-singular hyperfinite equivalence relations is stable under countable increasing unions, one can show that that $\Subhyp(\mathcal R)$ is closed in $\Sub(\mathcal R)$ (see \cite[Thm.\ 8.1]{kechrisSpaceMeasurepreservingEquivalence}). Let us observe that this can also be seen as a consequence of the characterization via approximate finiteness: if $\mathcal S$ is not hyperfinite as witnessed by some $\epsilon>0$ and $T_1,...,T_n$ its in full group, any $\mathcal S'$ which contains the $T_i$'s in its full group up to an $\epsilon/2n$ error will also fail to be hyperfinite.

Before we state our main result on dense orbits in $\Subhyp(\mathcal R)$, we need a preparatory well-known lemma on finite subequivalence relations.

\begin{lemma}\label{lem: same orbit statistics}
	Let $\mathcal R$ be an ergodic p.m.p.\ equivalence relation.
	Then two finite subequivalence relations $\mathcal S_1$ and $\mathcal S_2$ of $\mathcal R$ are in the same $[\mathcal R]$-orbit iff for all $n\in\N$
	\begin{equation}\label{eq: same orbit size statistics}
	\mu(\{x\in\N\colon \abs{[x]_{\mathcal S_1}}=n\})=\mu(\{x\in\N\colon \abs{[x]_{\mathcal S_2}}=n\}).
	\end{equation}
\end{lemma}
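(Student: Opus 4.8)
The statement is an ``orbit equivalence invariant'' characterization: two finite subequivalence relations of an ergodic p.m.p.\ $\mathcal R$ lie in the same $[\mathcal R]$-orbit precisely when they have the same distribution of class sizes. The forward direction is immediate: if $T\in[\mathcal R]$ and $T\cdot\mathcal S_1=\mathcal S_2$, then $T$ maps $\mathcal S_1$-classes bijectively onto $\mathcal S_2$-classes preserving cardinality, and since $T$ is measure-preserving, the set $\{x:\abs{[x]_{\mathcal S_1}}=n\}$ is mapped onto $\{x:\abs{[x]_{\mathcal S_2}}=n\}$ (up to null sets), giving \eqref{eq: same orbit size statistics}. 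So the content is the converse.

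\textbf{Construction for the converse.} Assume \eqref{eq: same orbit size statistics} holds for all $n$. For each $n$, let $X^{(i)}_n=\{x:\abs{[x]_{\mathcal S_i}}=n\}$; these are $\mathcal S_i$-invariant Borel sets with $\mu(X^{(1)}_n)=\mu(X^{(2)}_n)$, and it suffices to build, for each $n$ separately, an element of the pseudo full group $[[\mathcal R]]$ with domain $X^{(1)}_n$ and range $X^{(2)}_n$ conjugating $\mathcal S_{1\restriction X^{(1)}_n}$ to $\mathcal S_{2\restriction X^{(2)}_n}$; the pieces then assemble into a single element of $[\mathcal R]$ since $\mathcal R$ is aperiodic on $X$ (classes being infinite, we can absorb the leftover when $\mathcal S_i$ has some infinite classes — but here all classes are finite, so actually $\sum_n\mu(X^{(1)}_n)=1=\sum_n\mu(X^{(2)}_n)$ and the pieces exhaust $X$). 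Fix $n$. On $X^{(1)}_n$ choose a Borel transversal $A_1$ for $\mathcal S_{1\restriction X^{(1)}_n}$ together with Borel partial bijections $\varphi^1_1=\id_{A_1},\varphi^1_2,\dots,\varphi^1_n\in[[\mathcal R]]$ with domain $A_1$ whose ranges partition $X^{(1)}_n$ and such that $(x,\varphi^1_j(x))\in\mathcal S_1$; this is a standard Lusin--Novikov selection argument applied to the finite equivalence relation $\mathcal S_{1\restriction X^{(1)}_n}$. Do the same on $X^{(2)}_n$ to get $A_2$ and $\varphi^2_1,\dots,\varphi^2_n$. Since $\mathcal R$ is ergodic, $\mu(A_1)=\mu(X^{(1)}_n)/n=\mu(X^{(2)}_n)/n=\mu(A_2)$, so by Lemma~\ref{lem: full group orbit on malg} (ergodicity makes $\mu_{\mathcal R}=\mu$) there is $\psi\in[[\mathcal R]]$ with $\dom\psi=A_1$ and $\rng\psi=A_2$. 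Now define $T$ on $X^{(1)}_n$ by $T\bigl(\varphi^1_j(a)\bigr)=\varphi^2_j(\psi(a))$ for $a\in A_1$ and $j\in\{1,\dots,n\}$; this is a well-defined measure-preserving bijection $X^{(1)}_n\to X^{(2)}_n$ because the $\varphi^1_j(A_1)$ partition the source, the $\varphi^2_j(A_2)$ partition the target, and each piece is moved by a composition of elements of $[[\mathcal R]]$ hence measure-preservingly. By construction $T$ sends the $\mathcal S_1$-class $\{\varphi^1_1(a),\dots,\varphi^1_n(a)\}$ of $a$ onto the $\mathcal S_2$-class of $\psi(a)$, so $(x,y)\in\mathcal S_1 \iff (Tx,Ty)\in\mathcal S_2$ on $X^{(1)}_n$. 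Assembling over all $n$ gives $T\in[\mathcal R]$ with $T\cdot\mathcal S_1=\mathcal S_2$.

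\textbf{Expected obstacle.} There is no deep obstacle; the lemma is deliberately ``well-known.'' The only points requiring care are measurability bookkeeping — producing the transversals $A_i$ and the partial bijections $\varphi^i_j$ Borel-measurably and simultaneously across all $n$ (handled by Lusin--Novikov/Feldman--Moore uniformization for the finite subequivalence relations), and checking that the countably many pieces glue to a genuine full group element, i.e.\ that $\dom T=\rng T=X$ up to measure zero, which follows because $\{X^{(i)}_n\}_n$ partitions $X$ (all $\mathcal S_i$-classes are finite). The use of ergodicity enters exactly once, to equate $\mu(A_1)=\mu(A_2)$ and invoke Lemma~\ref{lem: full group orbit on malg}; in the non-ergodic setting one would instead need equality of the $M_{\mathcal R}$-conditional measures of each $X^{(i)}_n$, which is why the hypothesis is stated with $\mathcal R$ ergodic.
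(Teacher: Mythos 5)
Your proposal is correct and follows essentially the same route as the paper: decompose by class size $n$, pick a Borel transversal with partial bijections enumerating each class (you via Lusin--Novikov, the paper via a Borel linear order --- equivalent devices), use measure-preservation to get $\mu(A_1)=\mu(X^{(1)}_n)/n=\mu(A_2)$, invoke ergodicity to match the transversals by an element of $[[\mathcal R]]$, and transport classes positionwise. The paper's formula $T(x)=\varphi^2_{n,1,j}\varphi_n\varphi^1_{n,j,1}(x)$ is exactly your $T(\varphi^1_j(a))=\varphi^2_j(\psi(a))$.
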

\begin{proof}
	The direct implication is clear. Assume conversely that \eqref{eq: same orbit size statistics}
	holds.
	Let us fix some notation by letting, for $i\in\{1,2\}$ and $n\geq 1$
	\[
	X^i_n=\{x\in\N\colon \abs{[x]_{\mathcal S_i}}=n\}.
	\]
	Our assumption then becomes: the equality $\mu(X^1_n)=\mu(X^2_n)$ holds for every $n\geq 1$.
	Let $<$ be a Borel linear order on $X$. We can then
	define for all $i\in\{1,2\}$ and $j\in \{1,\dots,n\}$ the Borel set 
	\[
	Y_{n,j}^i=\{x\in X_n^i \colon x \text{ is the }j\text{'th element of }[x]_{\mathcal S_i}\}
	\]
	For $i\in\{1,2\}$, and $j_1,j_2\in\{1,\dots,n\}$, we have $\varphi_{n,j_1,j_2}^i: Y_{n,j_1}^i\to Y_{n,j_2}^i$ defined by taking $x\in Y_{n,j_1}^i$ to the $j_2$'th element of its $\mathcal S_i$-class, then $\varphi_{n,j_1,j_2}^i\in[[\mathcal S_i]]\subseteq[[\mathcal R]]$. In particular since $\mathcal R$ is p.m.p., we have that $\mu(Y_{n,j_1}^i)=\mu(Y_{n,j_2}^i)$. 
	Since $X_n^i$ is partitioned by $(Y_{n,j}^i)_{j=1}^n$, we conclude
	 \[n\mu(Y_{n,1}^1)=\mu(X_n^1)=\mu(X_n^2)=n\mu(Y_{n,1}^2)\]
	so that $\mu(Y_{n,1}^1)=\mu(Y_{n,1}^2)$.
	
	Since $\mathcal R$ is ergodic, for every $n\in\N$ we can fix some $\varphi_n\in[[\mathcal R]]$ such that $\dom\varphi_n=Y_{n,1}^1$ and $\rng \varphi_n=Y_{n,1}^2$. We then extend simultaneously these $\varphi_n$ as $T\in[\mathcal R]$ by letting for every $n \geq 1$, $j\in\{1,\dots n\}$ and $x\in X_{n,j}^1$:
	$$T(x)=\varphi_{n,1,j}^2\varphi_n\varphi_{n,j,1}^1(x)$$
	In other words, given $x\in X$ whose $\mathcal S_1$-class has cardinality $n$, we look at the first element $y$ of the $\mathcal S_1$-class of $x$, and then $T(x)$ is the element of the $\mathcal S_2$-class of $\varphi_n(y)$ which is in the same position as $x$.
	Then by construction $T\cdot \mathcal S_1=\mathcal S_2$ as wanted.
\end{proof}

\begin{theorem}\label{thm: closure orbit contains hf}
	Let $\mathcal R$ be a p.m.p.\ ergodic equivalence relation, let $\mathcal S\in\Sub(\mathcal R)$ be aperiodic and have everywhere infinite index. Then the closure of the $[\mathcal R]$-orbit of $\mathcal S$ contains $\Subhyp(\mathcal R)$.
\end{theorem}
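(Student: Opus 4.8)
The plan is to reduce the statement to two facts established earlier: that finite subequivalence relations are dense in $\Sub(\mathcal R)$ (equivalently in $\Subhyp(\mathcal R)$, using the strong topology description from Corollary \ref{cor: strong topology} together with the $\liminf$ argument for hyperfiniteness), and that Theorem \ref{thm: infinite index everywhere characterization} lets us push an everywhere-infinite-index relation towards $\Delta_X$ along full-group translates. First I would fix a hyperfinite $\mathcal T\in\Subhyp(\mathcal R)$ and $\epsilon>0$, and observe that it suffices to approximate a \emph{finite} subequivalence relation $\mathcal F$ within $\epsilon$, since finite subequivalence relations are dense in $\Subhyp(\mathcal R)$ (a finite increasing exhaustion of $\mathcal T$ converges to it in the strong topology). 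So the real target is: given a finite $\mathcal F\in\Sub(\mathcal R)$ and $\epsilon>0$, find $T\in[\mathcal R]$ with $M(T\cdot\mathcal S\mathbin{\triangle}\mathcal F)<\epsilon$, or rather, control $M((T\cdot\mathcal S)\cap\varphi_i)$ for the first $k$ moving partial involutions in a uniquely generating sequence.

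Next I would exploit the structure of $\mathcal F$: being finite (say with classes of bounded size after throwing away an $\epsilon$-set, using Lemma \ref{lem: same orbit statistics}-type bookkeeping), it admits a Borel fundamental domain $D\subseteq X$, i.e. a transversal meeting each $\mathcal F$-class exactly once, and $X\setminus D$ is partitioned into finitely many (or countably many, organized by class-size strata) pieces, each carried onto $D$ by an element of $[[\mathcal F]]\subseteq[[\mathcal R]]$. The key move is that $\mathcal S$, being aperiodic, can be translated so that its restriction to $D$ is non-trivial and aperiodic: since $\mathcal R$ is ergodic and $\mathcal S$ aperiodic, using Lemma \ref{lem: full group orbit on malg} and Maharam's Lemma (Proposition \ref{prop: maharam}) one can find $T_0\in[\mathcal R]$ such that $T_0\cdot\mathcal S$ restricted to $D$ is aperiodic and of everywhere infinite index inside $\mathcal R_{\restriction D}$. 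Then I would apply Theorem \ref{thm: infinite index everywhere characterization} \emph{inside} $\mathcal R_{\restriction D}$ to the relation $(T_0\cdot\mathcal S)_{\restriction D}$: its closure of full-group orbit (in $[\mathcal R_{\restriction D}]$) contains $\Delta_D$, so we can further translate by $T_1\in[\mathcal R_{\restriction D}]$ to make $(T_1T_0\cdot\mathcal S)_{\restriction D}$ agree with $\Delta_D$ off a small set. Finally, conjugating this $D$-picture around by the chosen elements of $[[\mathcal F]]$ (i.e. using the fundamental domain to transport the "almost-trivial on $D$" relation to an "almost-equal-to-$\mathcal F$" relation on all of $X$) produces the desired $T\in[\mathcal R]$ with $T\cdot\mathcal S$ close to $\mathcal F$.

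I expect the main obstacle to be the bookkeeping in the last step: transporting control from $D$ to $X$ and verifying that the resulting relation is genuinely close to $\mathcal F$ in the $M$-metric, not merely "equal to $\mathcal F$ on each class we look at." The subtlety is that after forcing $T\cdot\mathcal S$ to be trivial on $D$, I need it to \emph{contain} (a large part of) $\mathcal F$, which requires that the translate also respects the identifications $\mathcal F$ makes between $D$ and $X\setminus D$; this is why one should not translate $\mathcal S$ by an arbitrary element but rather build $T$ as a composition that is adapted to $\mathcal F$'s fundamental domain structure — one chooses the translate on $D$ freely via Theorem \ref{thm: infinite index everywhere characterization}, then \emph{defines} it on $X\setminus D$ by copying via $[[\mathcal F]]$, so that $T\cdot\mathcal S$ automatically looks like $\mathcal F$ up to the error set $D$-error propagated over the finitely many (bounded-size) strata. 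Keeping the measure of the accumulated error set below $\epsilon$ forces a quantitative choice of the error allowed on $D$, roughly $\epsilon$ divided by the number of strata times the maximal class size. A secondary point to handle carefully is that Theorem \ref{thm: infinite index everywhere characterization} is stated for an aperiodic p.m.p.\ equivalence relation on a \emph{probability} space, so I must renormalize $\mu_{\restriction D}$ and check $\mathcal R_{\restriction D}$ is aperiodic (it is, since a restriction of an aperiodic ergodic relation to a positive-measure set is aperiodic) and that $(T_0\cdot\mathcal S)_{\restriction D}$ is everywhere of infinite index there — which is where the ergodicity of $\mathcal R$ combined with everywhere-infinite-index of $\mathcal S$ and a careful application of the conditional-measure machinery is needed, ideally reducing to the case where $\mathcal S$ is already positioned so that $\mathcal S_{\restriction D}$ is aperiodic of infinite index without moving it at all.
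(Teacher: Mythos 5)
Your skeleton is the paper's: reduce to a finite $\mathcal F$ by density, pass to a fundamental domain, apply Theorem \ref{thm: infinite index everywhere characterization} there, and transport back over the strata. You also correctly isolate the crux — that after trivializing on $D$ the translate must still \emph{contain} $\mathcal F$. But your resolution of that crux is where the argument, as written, would fail. If you define the translating map on $X\setminus D$ by ``copying via $[[\mathcal F]]$'', i.e.\ by conjugating with maps $\theta_j\in[[\mathcal F]]$ carrying $D$ onto the strata, then for $(x,y)\in\mathcal S$ with images in the $i$-th and $j$-th strata nothing relates $(\theta_i^{-1}(\cdot),\theta_j^{-1}(\cdot))$ back to $\mathcal S$: the identity $\widetilde T\cdot\mathcal S=\bigsqcup_{i,j}\theta_i\times\theta_j\bigl((T\cdot\mathcal S_{\restriction D})\cap(\dom\theta_i\times\dom\theta_j)\bigr)$ requires the conjugating maps to lie in the pseudo full group of $\mathcal S$ (after pulling back), not of $\mathcal F$. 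With maps taken only from $[[\mathcal F]]$, the resulting relation need not contain $\mathcal F$ at all.

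The missing idea is the paper's first step: use aperiodicity of $\mathcal S$ together with Maharam's Lemma (Proposition \ref{prop: maharam}) and Lemma \ref{lem: full group orbit on malg} to build a copy $\mathcal S_0\subseteq\mathcal S$ of $\mathcal F$ lying \emph{inside} $\mathcal S$, with fundamental domain $Y_1$ and strata maps $\psi_j\in[[\mathcal S]]$ whose ranges partition $X$; Lemma \ref{lem: same orbit statistics} then gives a fixed $T\in[\mathcal R]$ with $T\cdot\mathcal S_0=\mathcal F$. One applies Theorem \ref{thm: infinite index everywhere characterization} on $Y_1$ and conjugates by the $\psi_j$ — which works precisely because they are in $[[\mathcal S]]$ — and only then translates by $T$. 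Note also that your auxiliary translation $T_0$ making $\mathcal S_{\restriction D}$ ``aperiodic'' is neither needed nor the role aperiodicity plays: everywhere infinite index passes automatically to restrictions, so Theorem \ref{thm: infinite index everywhere characterization} applies on the fundamental domain with no repositioning; the aperiodicity of $\mathcal S$ is consumed entirely by the embedding of $\mathcal S_0$ into $\mathcal S$. The ordering matters too: if you first run the theorem on $D$ and only afterwards try to extend, the image of $D$ is uncontrolled and the required $[[\mathcal S]]$-maps with ranges partitioning the complement may not exist, whereas building $\mathcal S_0$ first avoids this.
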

\begin{proof}
	It follows from the definition of hyperfiniteness that finite equivalence relations are dense in $\Subhyp(\mathcal R)$, so it suffices to approximate every finite subequivalence relation of $\mathcal R$ by an element of the $[\mathcal R]$-orbit of $S$. 
	Let $\mathcal R_0$ be such a finite equivalence relation. For each $n$ let $$X_n=\{x\in X: \abs{[x]_{\mathcal R_0}}=n\}.$$

	Because $\mathcal S$ is aperiodic, we can find an element of the $[\mathcal R]$-orbit of $\mathcal R_0$ contained in $\mathcal S$.
	Indeed, by Maharam's lemma we first have a partition of $X$ into pieces $(Y_{j,n})_{1\leq j\leq n}$ such that 
	$\mu_{\mathcal S}(Y_{j,n})=\frac{\mu(X_n)}n$.
	We then have for every $2\leq j\leq n$ some $\varphi_{j,n}\in[[\mathcal S]]$ such that $\dom\varphi_{j,n}=Y_{1,n}$ and $\rng\varphi_{j,n}=Y_{j,n}$.

	Now let $Y_1=\bigsqcup_n Y_{1,n}$, $\psi_1=\id_{Y_1}$ and for all $j\geq 2$, $\psi_j=\bigsqcup_{n\geq j} \varphi_{j,n}$. Note that the ranges of the $\psi_{j}$'s partition $X$.
	Denoting by $\mathcal S_0$ the equivalence relation generated by all the $\psi_{j}$'s,  we then have 
	\[
	\mathcal S_0=\bigsqcup_n\bigsqcup_{i,j=1}^n \varphi_{n,i}\times\varphi_{n,j}(\Delta_{Y_{1,n}})= \bigsqcup_{i,j} \psi_{i}\times \psi_{j}(\Delta_{Y_{1}}\cap (\dom \psi_i\times\dom \psi_j)),
	\]
	In particular the $\mathcal S_0$-class of every $x\in \bigsqcup_{j=1}^n Y_{n,j}$ has cardinality $n$, and so by Lemma \ref{lem: same orbit statistics} we can fix some $T\in [\mathcal R]$ such that $T\cdot \mathcal S_0=\mathcal R_0$.

	By Theorem \ref{thm: infinite index everywhere characterization}, we find a sequence $(T_k)_k$ in the full group of the restriction of $\mathcal R$ to $Y_{1}$ such that $T_k\cdot \mathcal S_{\restriction Y_1}\to\Delta_{Y_1}$. 
	We then let $\tilde T_k(x)=\psi_{j}T_k\psi_{j}\inv(x)$ for all $x\in\rng\psi_{j}$. 
	
	By construction $\displaystyle \tilde T_k\cdot \mathcal S=\bigsqcup_{i,j} \psi_{i}\times\psi_{j}(T_k\cdot \mathcal S_{\restriction Y_{1}}\cap (\dom \psi_i\times\dom \psi_j))$, so by Lemma \ref{lem: continuous union on partition}
	$$\tilde T_k\cdot \mathcal S\to\bigsqcup_{i,j} \psi_{i}\times\psi_{j}(\Delta_{Y_{1}}\cap (\dom \psi_i\times\dom \psi_j))=\mathcal S_0$$
	which yields the desired result since we then have $T \tilde T_k\cdot\mathcal S\to T\cdot\mathcal S_0=\mathcal R_0$.
	\end{proof}

\begin{remark}
	By \cite[Cor.\ 5.4 (ii)]{ioanaSubequivalenceRelationsPositivedefinite2009}, if $\mathcal R$ is aperiodic and comes from a measure-preserving action of a property (T) countable group, then there are no dense orbits in the space of subequivalence relations because any $\mathcal S$ with a dense orbit would have to have finite index in some restriction of $\mathcal R$, and hence cannot contain $\Delta_X$ in its orbit by Theorem \ref{thm: infinite index everywhere characterization}.
	The general fact is that there cannot be dense orbits in the space of subequivalence relations of $\mathcal R$ as soon as $\mathcal R$ is not \emph{approximable} as defined by Gaboriau and Tucker-Drob in \cite{gaboriauApproximationsStandardEquivalence2016}.
	Indeed if $\mathcal S$ has a dense orbit Lemma \ref{lem: converging subsequence liminf} provides a sequence $\mathcal S_n$ such that $\mathcal R=\liminf \mathcal S_n$, while non approximability forces some restriction of $\bigcap_{n\geq N}\mathcal S_n$ to coincide with $\mathcal R$ on a positive measure subset $A$, contradicting the density of the orbit of $\mathcal S$ by Theorem \ref{thmi: infinite index everywhere characterization}.
	For more examples of non approximable equivalence relations, the reader can consult  the paper of Gaboriau and Tucker-Drob, where they obtain for instance a quantitative version of non-approximability for some equivalence relations coming from actions of product groups (see  \cite[Thm.~2.4]{gaboriauApproximationsStandardEquivalence2016}).
  \end{remark}

\begin{corollary}\label{cor: dense orbit in subhyp}
	Let $\mathcal R$ be an ergodic p.m.p.\ equivalence relation, let $\mathcal S\in\Sub(\mathcal R)$. The following are equivalent:
	\begin{enumerate}[(i)]
		\item\label{item: aper infind} $\mathcal S$ is aperiodic and has everywhere infinite index in $\mathcal R$;
		\item\label{item: aper closure fullgroup} $\mathcal S$ is aperiodic and the closure of the $[\mathcal R]$-orbit of $\mathcal S$ contains $\Delta_X$;
		\item\label{item: aper closure aut}$\mathcal S$ is aperiodic and the closure of the $\Aut(\mathcal R)$-orbit of $\mathcal S$ contains $\Delta_X$;
		\item\label{item: closure full group} The closure of the $[\mathcal R]$-orbit of $\mathcal S$ contains $\Subhyp(\mathcal R)$;
		\item\label{item: closure aut} The closure of the $\Aut(\mathcal R)$-orbit of $\mathcal S$ contains $\Subhyp(\mathcal R)$.
	\end{enumerate}
\end{corollary}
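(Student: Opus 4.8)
The plan is to close the cycle of implications
\eqref{item: aper infind}$\Leftrightarrow$\eqref{item: aper closure fullgroup}$\Leftrightarrow$\eqref{item: aper closure aut},\quad
\eqref{item: aper infind}$\Rightarrow$\eqref{item: closure full group}$\Rightarrow$\eqref{item: closure aut}$\Rightarrow$\eqref{item: aper closure aut}.
All links but the last are a mere repackaging of Theorems \ref{thm: infinite index everywhere characterization} and \ref{thm: closure orbit contains hf}; the one implication that requires a new idea is \eqref{item: closure aut}$\Rightarrow$\eqref{item: aper closure aut}, where one has to recover aperiodicity of $\mathcal S$ from the orbit-closure condition, and that is the part I expect to be the crux.

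For the first three items, Theorem \ref{thm: infinite index everywhere characterization} already asserts that ``$\mathcal S$ has everywhere infinite index'', ``the closure of the $[\mathcal R]$-orbit of $\mathcal S$ contains $\Delta_X$'' and ``the closure of the $\Aut(\mathcal R)$-orbit of $\mathcal S$ contains $\Delta_X$'' are equivalent, so intersecting each of these with the hypothesis ``$\mathcal S$ is aperiodic'' gives \eqref{item: aper infind}$\Leftrightarrow$\eqref{item: aper closure fullgroup}$\Leftrightarrow$\eqref{item: aper closure aut}. The implication \eqref{item: aper infind}$\Rightarrow$\eqref{item: closure full group} is exactly Theorem \ref{thm: closure orbit contains hf}, and \eqref{item: closure full group}$\Rightarrow$\eqref{item: closure aut} is immediate because $[\mathcal R]\leq\Aut(\mathcal R)$, so the $[\mathcal R]$-orbit of $\mathcal S$, hence its closure, is contained in the closure of the $\Aut(\mathcal R)$-orbit.

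It remains to prove \eqref{item: closure aut}$\Rightarrow$\eqref{item: aper closure aut}, so assume \eqref{item: closure aut}. Since $\Delta_X$ is a finite, hence hyperfinite, subequivalence relation, $\Delta_X\in\Subhyp(\mathcal R)$, and therefore the closure of the $\Aut(\mathcal R)$-orbit of $\mathcal S$ contains $\Delta_X$; the substantial point is that $\mathcal S$ must be aperiodic. I will argue the contrapositive: if $\mathcal S$ is not aperiodic, then the closure of the $\Aut(\mathcal R)$-orbit of $\mathcal S$ contains \emph{no} aperiodic subequivalence relation at all. As $\mathcal R$ is ergodic and p.m.p.\ on a non-atomic standard probability space it is aperiodic, so $\Subhyp(\mathcal R)$ does contain an aperiodic element, for instance the orbit equivalence relation of a single aperiodic $T\in[\mathcal R]$, which is hyperfinite; this will contradict \eqref{item: closure aut}. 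To prove the claim, note that non-aperiodicity of $\mathcal S$ yields some $n\geq 1$ with $q:=\mu(A_n)>0$, where $A_n=\{x:\abs{[x]_{\mathcal S}}=n\}$, and that for every $T\in\Aut(\mathcal R)$ the set $\{x:\abs{[x]_{T\cdot\mathcal S}}=n\}$ equals $T(A_n)$, still of measure $q$. Now let $\mathcal T$ be aperiodic with $T_k\cdot\mathcal S\to\mathcal T$. By Proposition \ref{prop: convergence} we may pass to a subsequence along which $\mathcal T=\liminf_k(T_k\cdot\mathcal S)$; then $M\bigl(\mathcal T\setminus\liminf_k(T_k\cdot\mathcal S)\bigr)=0$, so for $\mu$-a.e.\ $x$ every element of $[x]_{\mathcal T}$ lies in $[x]_{T_k\cdot\mathcal S}$ for all large $k$. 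Since $[x]_{\mathcal T}$ is infinite for a.e.\ $x$, this forces $\abs{[x]_{T_k\cdot\mathcal S}}\to+\infty$ for a.e.\ $x$, hence $\chi_{\{x:\abs{[x]_{T_k\cdot\mathcal S}}=n\}}\to 0$ $\mu$-a.e.; by dominated convergence $\mu\bigl(\{x:\abs{[x]_{T_k\cdot\mathcal S}}=n\}\bigr)\to 0$, contradicting that this quantity equals $q>0$ for every $k$. This proves the claim and completes the cycle.

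The only genuinely delicate ingredient is the recovery of aperiodicity of $\mathcal S$ in the last implication; everything else is formal bookkeeping on top of the two main theorems. What makes that step work is that the finite-class-size statistics $n\mapsto\mu(\{x:\abs{[x]_{\mathcal T}}=n\})$ are $\Aut(\mathcal R)$-invariant, hence constant along the orbit of $\mathcal S$, whereas the $\liminf$ description of convergence in the measure algebra (Proposition \ref{prop: convergence}) forces all finite class sizes to blow up along any sequence converging to an aperiodic relation.
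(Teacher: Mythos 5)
Your proposal is correct, and its overall architecture (the equivalence of the first three items via Theorem \ref{thm: infinite index everywhere characterization}, \eqref{item: aper infind}$\Rightarrow$\eqref{item: closure full group} via Theorem \ref{thm: closure orbit contains hf}, the trivial \eqref{item: closure full group}$\Rightarrow$\eqref{item: closure aut}, and the identification of \eqref{item: closure aut}$\Rightarrow$\eqref{item: aper closure aut} as the one step needing work) coincides with the paper's. Where you genuinely diverge is in how you recover aperiodicity of $\mathcal S$ from \eqref{item: closure aut}. The paper's route is to exhibit an explicit $\Aut(\mathcal R)$-invariant set $\mathbf B$ of subequivalence relations, cut out by the inequality $\sup_{T_1,\dots,T_{n+1}\in[\mathcal R]}M(\mathcal S'\cap(T_1\cup\cdots\cup T_{n+1}))\leq \delta n+(1-\delta)(n+1)$, and to show $\mathbf B$ is closed using the lower-$M$-topology machinery of Section \ref{sec: strong and low} (lower semicontinuity of $M$ and of suprema of lower semicontinuous functions); since $\mathbf B$ contains $\mathcal S$ but no aperiodic relation, while $\Subhyp(\mathcal R)$ contains aperiodic relations, one gets a contradiction. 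Your route instead exploits the $\Aut(\mathcal R)$-invariance of the class-size statistics $\mu(\{x:\abs{[x]_{\mathcal S}}=n\})$ together with the $\liminf$ description of convergence (Proposition \ref{prop: convergence}): along a subsequence with $\mathcal T=\liminf_k(T_k\cdot\mathcal S)$, aperiodicity of $\mathcal T$ forces $\abs{[x]_{T_k\cdot\mathcal S}}\to\infty$ almost everywhere, so by dominated convergence the size-$n$ statistic tends to $0$, contradicting its constancy. Both arguments are sound; yours is more elementary and self-contained (it bypasses the lower topology entirely and only needs Proposition \ref{prop: convergence}), and it proves the slightly sharper statement that the orbit closure of a non-aperiodic $\mathcal S$ contains no aperiodic relation whatsoever, whereas the paper's version illustrates the utility of the lower-topology semicontinuity arguments developed earlier and is the pattern reused elsewhere in the text. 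One small point of care in your write-up, which you do handle correctly: the rank $N$ at which an element $y\in[x]_{\mathcal T}$ enters $[x]_{T_k\cdot\mathcal S}$ depends on $y$, so the blow-up of $\abs{[x]_{T_k\cdot\mathcal S}}$ must be extracted from finitely many elements at a time, as you do.
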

\begin{proof}
	The equivalence of the first three items \eqref{item: aper infind}, \eqref{item: aper closure fullgroup} and \eqref{item: aper closure aut} is a direct consequence of Theorem \ref{thm: infinite index everywhere characterization}. 
	The implication \eqref{item: aper infind}$\implies$\eqref{item: closure full group} is exactly Theorem \ref{thm: closure orbit contains hf}, and \eqref{item: closure full group} clearly implies \eqref{item: closure aut}.
	Finally, let us prove \eqref{item: closure aut}$\implies$\eqref{item: aper closure aut}: assume the closure of the $\Aut(\mathcal R)$-orbit of $\mathcal S$ contains $\Sub_{hyp}(\mathcal R)$.
	Since $\Delta_X$ is hyperfinite, we have that $\Delta_X$ belongs to the closure of the $\Aut(\mathcal R)$-orbit of $\mathcal S$, so we only have to show $\mathcal S$ is aperiodic.
	Assume by contradiction that $\mathcal S$ is not aperiodic, then for some $n\in\N$ we have $\mu(\{x\in X\colon \abs{[x]_{\mathcal S}}\leq n\})>0$.
	Let $\delta=\mu(\{x\in X\colon \abs{[x]_{\mathcal S}}\leq n\})$, then $\mathcal S$ belongs to the $\Aut(\mathcal R)$-invariant set $\mathbf B$ of all $\mathcal S'\in\Sub(\mathcal R)$ such that 
	$$\sup_{T_1,\dots,T_{n+1}\in[\mathcal R]}M(\mathcal S'\cap (T_1\cup\cdots\cup T_{n+1}))\leq \delta n+(1-\delta)(n+1).$$
	Note that the intersection map is continuous for the lower $M$-topology by Lemma \ref{lem: intersection is lower continous} and that $M$ is lower semi-continuous for the lower $M$-topology.
	Since arbitrary supremums of lower semi-continuous function are lower semi-continuous, 
	the $\Aut(\mathcal R)$-invariant set $\mathbf B$ is closed for the lower $M$-topology, in particular it is closed for the strong topology. Since the closure of the $\Aut(\mathcal R)$-orbit of $\mathcal S\in\mathbf B$ contains $\Subhyp(\mathcal R)$, we have $\mathbf B\subseteq \Subhyp(\mathcal R)$.
	
	 However $\mathbf B$ is disjoint from the set of aperiodic subequivalence relations since for $\mathcal T$ aperiodic we can find $T_1,...,T_{n+1}$ in the full group of $\mathcal T$ with disjoint graphs, so that $M(\mathcal T\cap (T_1\sqcup T_2\sqcup\cdots\sqcup T_{n+1}))=n+1$. Since $\Subhyp(\mathcal R)$ contains aperiodic subequivalence relations, we reached the desired contradiction. So $\mathcal S$ is aperiodic and hence \eqref{item: aper closure aut} holds as wanted.
\end{proof}

\begin{corollary}\label{cor: dense orbit in R0}
	For a subequivalence $\mathcal S$ of the hyperfinite ergodic p.m.p.\ equivalence relation $\mathcal R_0$, the following are equivalent:
	\begin{enumerate}[(i)]
		\item $\mathcal S$ is aperiodic and has everywhere infinite index in $\mathcal R_0$;
		\item $\mathcal S$ is aperiodic and the closure of the $[\mathcal R_0]$-orbit of $\mathcal S$ contains $\Delta_X$;
		\item $\mathcal S$ is aperiodic and the closure of the $\Aut(\mathcal R_0)$-orbit of $\mathcal S$ contains $\Delta_X$;
		\item The $[\mathcal R_0]$-orbit of $\mathcal S$ is dense in $\Sub(\mathcal R_0)$;
		\item The $\Aut(\mathcal R_0)$-orbit of $\mathcal S$ is dense in $\Sub(\mathcal R_0)$.
	\end{enumerate}
\end{corollary}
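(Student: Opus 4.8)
The plan is to reduce everything to Corollary \ref{cor: dense orbit in subhyp} by showing that, since $\mathcal R_0$ is hyperfinite, its whole space of subequivalence relations is hyperfinite, i.e.\ $\Subhyp(\mathcal R_0)=\Sub(\mathcal R_0)$. First I would record the elementary fact that any subequivalence relation of a hyperfinite equivalence relation is itself hyperfinite: if we fix a Borel writing $\mathcal R_0=\bigcup_n\mathcal R_n$ with each $\mathcal R_n$ a finite Borel subequivalence relation and $\mathcal R_n\subseteq\mathcal R_{n+1}$, then for any Borel representative of $\mathcal S\in\Sub(\mathcal R_0)$ the relations $\mathcal S\cap\mathcal R_n$ form an increasing sequence of Borel equivalence relations, each finite because contained in the finite $\mathcal R_n$, and their union is $\mathcal S$. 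This is exactly the combination of stability of finiteness under intersection with a finite relation and of hyperfiniteness under increasing unions already invoked earlier in the paper, so it is immediate.

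Once $\Subhyp(\mathcal R_0)=\Sub(\mathcal R_0)$ is established, I would simply apply Corollary \ref{cor: dense orbit in subhyp} with $\mathcal R=\mathcal R_0$. Its items (i), (ii), (iii) are verbatim the items (i), (ii), (iii) of the present statement. For the last two, note that the orbit of $\mathcal S$ always lies in the closed set $\Sub(\mathcal R_0)$, so the closure of the $[\mathcal R_0]$-orbit (resp.\ of the $\Aut(\mathcal R_0)$-orbit) of $\mathcal S$ contains $\Subhyp(\mathcal R_0)=\Sub(\mathcal R_0)$ if and only if that orbit is dense in $\Sub(\mathcal R_0)$; this matches items (iv) and (v) above with items (iv) and (v) of Corollary \ref{cor: dense orbit in subhyp}. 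Hence all five conditions are equivalent.

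Since the whole argument is a direct specialization of the previously proved corollary, I do not expect any genuine obstacle; the only point to be careful about is the passage between Borel and measure-theoretic representatives when writing $\mathcal S=\bigcup_n(\mathcal S\cap\mathcal R_n)$, which is harmless because one may choose honest Borel equivalence relations representing $\mathcal S$ and the $\mathcal R_n$ and everything is understood up to a null set throughout.
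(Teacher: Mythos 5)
Your proposal is correct and is essentially identical to the paper's proof, which also deduces the statement directly from Corollary \ref{cor: dense orbit in subhyp} via the observation that every subequivalence relation of a hyperfinite equivalence relation is hyperfinite (so that $\Subhyp(\mathcal R_0)=\Sub(\mathcal R_0)$). Your explicit justification of that observation via $\mathcal S=\bigcup_n(\mathcal S\cap\mathcal R_n)$ is the standard argument and is fine.
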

\begin{proof}
	This is a direct consequence of the previous corollary since every subequivalence relation of a hyperfinite equivalence relation is hyperfinite.
\end{proof}

\begin{remark}
	In the non-singular ergodic type II$_\infty$ or type III case, one can show that there are always dense orbits in the space of subequivalence relations.
	Let us sketch the proof: first note that by asumption there is a sequence $(\varphi_n)$ of elements of $[[\mathcal R]]$ such that for all $n$, $\dom \varphi_n=X$ but $X=\bigsqcup_n \rng \varphi_n$ (this actually characterizes equivalence relations of type II$_\infty$ or III among ergodic non-singular equivalence relations).
	We then enumerate a dense subset of $\Sub(\mathcal R)$ as $(\mathcal S_n)$.
	The desired subequivalence relation with a dense orbit is $\mathcal S=\bigsqcup_n \varphi_n\cdot \mathcal S_n$, as one can see by approximating each $\varphi_n$ by a full group element in the $d_M$ metric.
\end{remark}

\subsection{Meagerness of full group orbits}\label{sec: meager orbits}

In this section, we use Ioana's intertwining for subequivalence relations in order to  show that full groups orbits are always meager in the space of hyperfinite subequivalence relations. 
In what follows, we use the \textbf{uniform metric} on the full group of a p.m.p.\ equivalence relation $\mathcal R$, defined by 
$$d_u(T_1,T_2)=\mu(\{x\in X\colon T_1(x)\neq T_2(x)\})=d_M(T_1,T_2),$$
which is biinvariant, complete and separable, thus endowing $[\mathcal R]$ with a Polish group topology.
We will use without mention the well-known fact that if $M(\mathcal S\cap T)>1-\epsilon$, then there is $S\in[\mathcal S]$ such that $d_u(S,T)<\epsilon$ (to see this, note that $\mathcal S\cap T\in [[\mathcal S]]$, hence it can be extended to an element $S$ of the full group of $\mathcal S$ by Lemma \ref{lem: full group orbit on malg}).

\begin{definition}[Ioana]
	Let $\mathcal S, \mathcal T$ be two subequivalence relations of a p.m.p.\ equivalence relation $\mathcal R$, write $\mathcal S\prec\mathcal T$ if there is no sequence $(T_n)$ in the full group of $\mathcal T$ such that for all $U_1, U_2\in [\mathcal R]$, we have 
	\[
	M(\mathcal S\cap U_1T_nU_2)\to 0.
	\]	
\end{definition}

We will crucially use Ioana's version of Popa's interwining theorem 
\cite[Lem.~1.7]{ioanaUniquenessGroupMeasure2012}: if $\mathcal S\prec \mathcal T$
then $\mathcal S$ can be translated by an element of the full group of $\mathcal R$ so as to have somewhere
finite index in $\mathcal T$.
The interested reader is also refered to \cite[Lem.~3.1]{spaasStableDecompositionsRigidity2023} for a 
characterization of
$\mathcal S\prec\mathcal T$ when $\mathcal S$ has infinite index in $\mathcal R$.

We leave it to the reader to check that $\prec$ is $[\mathcal R]$-invariant, meaning that 
if $\mathcal S\prec\mathcal T$ and $U_1,U_2\in[\mathcal R]$, then $U_1\cdot \mathcal S\prec U_2\cdot \mathcal T$. Also note that we always have $\mathcal S\prec\mathcal S$.

\begin{lemma}\label{lem: intertw is Fsigma}
	The relation $\prec$ defines a $F_\sigma$ subset of $\Sub(\mathcal R)\times\Sub(\mathcal R)$.
\end{lemma}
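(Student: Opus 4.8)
I want to show the set
\[
\{(\mathcal S,\mathcal T)\in\Sub(\mathcal R)^2:\mathcal S\prec\mathcal T\}
\]
is $F_\sigma$ for the strong topology. Unwinding the definition, $\mathcal S\prec\mathcal T$ means that it is \emph{not} the case that there exists a sequence $(T_n)$ in $[\mathcal T]$ with $M(\mathcal S\cap U_1T_nU_2)\to 0$ for all $U_1,U_2\in[\mathcal R]$. The first move is to replace this sequential statement by a quantitative one: $\mathcal S\prec\mathcal T$ should be equivalent to the assertion that there exist $\epsilon>0$ and finitely many $U_1,\dots,U_k, V_1,\dots,V_k\in[\mathcal R]$ (or a suitable single bound) such that for \emph{every} $T\in[\mathcal T]$ one has $\max_i M(\mathcal S\cap U_iTV_i)\geq\epsilon$. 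The point is that a sequence witnessing $\mathcal S\not\prec\mathcal T$ has, for each finite collection of $U$'s and $V$'s, eventual terms making all the relevant intersection masses small; conversely a diagonal/compactness argument over a countable dense set of pairs $(U,V)$ in $[\mathcal R]$ (dense for $d_u$) produces the converging sequence. So the first key step is:
\[
\mathcal S\prec\mathcal T\ \Longleftrightarrow\ \exists\, \epsilon>0\ \exists\, U_1,\dots,U_k,V_1,\dots,V_k\in D\ \forall\, T\in[\mathcal T]\quad \max_{1\leq i\leq k} M(\mathcal S\cap U_iTV_i)\geq\epsilon,
\]
where $D$ is a fixed countable $d_u$-dense subgroup of $[\mathcal R]$. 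One needs to check that replacing arbitrary $U_j,V_j\in[\mathcal R]$ by elements of $D$ is harmless, using that $U\mapsto M(\mathcal S\cap UTV)$ is $d_u$-continuous in $U,V$ uniformly enough (conjugation by $U$ changes $M(\mathcal S\cap\,\cdot\,)$ by at most $2d_u(U,U')$, say).

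\textbf{Turning the inner quantifier into a closed condition.} For fixed $\epsilon\in\Q_{>0}$ and fixed $U_1,\dots,U_k,V_1,\dots,V_k\in D$, consider
\[
F_{\epsilon,\bar U,\bar V}=\Big\{(\mathcal S,\mathcal T):\ \forall T\in[\mathcal T]\ \ \max_i M(\mathcal S\cap U_iTV_i)\geq\epsilon\Big\}.
\]
Then $\{\prec\}=\bigcup_{\epsilon,\bar U,\bar V}F_{\epsilon,\bar U,\bar V}$ is a countable union, so it suffices that each $F_{\epsilon,\bar U,\bar V}$ be closed in $\Sub(\mathcal R)^2$. The main work is here. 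The condition involves a universal quantifier over the (non-compact, moving) group $[\mathcal T]$, which is the principal obstacle: as $\mathcal T$ varies, $[\mathcal T]$ varies, and $T\in[\mathcal T]$ is not a "closed" parametrization. The idea is to rewrite "$\exists T\in[\mathcal T]$ with $\max_i M(\mathcal S\cap U_iTV_i)<\epsilon$" purely in terms of $\mathcal T$ as a subset of $\mathcal R$: since any $T\in[\mathcal T]$ is a countable disjoint union of pieces of $\mathcal T$ glued along a partition of $X$, the quantity $\inf_{T\in[\mathcal T]}\max_i M(\mathcal S\cap U_iTV_i)$ is a function of $\mathcal T$ that should be \emph{upper} semi-continuous in $\mathcal T$ for the strong topology (as $\mathcal T_n\to\mathcal T$, a near-optimal $T\in[\mathcal T]$ can be approximated in $d_u$ by elements of $[\mathcal T_n]$ using the remark preceding the lemma: if $M(\mathcal T_n\cap T)>1-\delta$ there is $S\in[\mathcal T_n]$ with $d_u(S,T)<\delta$), and the dependence on $\mathcal S$ is handled because $M(\mathcal S\cap A)$ is lower semi-continuous in $\mathcal S$ for the lower $M$-topology by Lemma \ref{lem: intersection is lower continous} together with lower semi-continuity of $M$. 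Combining: $(\mathcal S,\mathcal T)\mapsto \inf_{T\in[\mathcal T]}\max_i M(\mathcal S\cap U_iTV_i)$ is lower semi-continuous, hence the set where it is $\geq\epsilon$ is closed. (A little care is needed about whether the infimum is over $[\mathcal T]$ or over its strong-closure, and whether to use $>1-\delta$ or a closed inequality; one may need to pass to $\epsilon'<\epsilon$ and take a further union over rationals, which is still a countable union and changes nothing.)

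\textbf{Assembling.} With each $F_{\epsilon,\bar U,\bar V}$ closed and $D$ countable, $\{\mathcal S\prec\mathcal T\}=\bigcup F_{\epsilon,\bar U,\bar V}$ is $F_\sigma$. The step I expect to be genuinely delicate is the semi-continuity of $\mathcal T\mapsto \inf_{T\in[\mathcal T]}(\cdots)$: I must make sure that an element $T$ of the full group of the \emph{limit} $\mathcal T$ really can be tracked by full-group elements of $\mathcal T_n$ with small $d_u$-error — this is exactly the content of the "well-known fact" recalled just before the lemma (from $M(\mathcal S\cap T)>1-\epsilon$ one gets $S\in[\mathcal S]$ with $d_u(S,T)<\epsilon$), applied with $\mathcal S=\mathcal T_n$ and using $\mathcal T_n\to\mathcal T$ in the strong topology to make $M(\mathcal T_n\cap (\mathcal T\cap\text{large piece of }T))$ close to the mass of that piece — plus an exhaustion of $T$ by finitely many such pieces so that the union map is continuous (Lemma \ref{lem: continuous union on partition}). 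Everything else (reduction to a countable dense subgroup, the $\epsilon$-quantitative reformulation, continuity of conjugation) is routine.
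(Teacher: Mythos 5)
Your argument is essentially the paper's, read in the complement: the paper proves that $\not\prec$ is $G_\delta$ by exactly your quantitative reformulation over a countable dense sequence $(U_i)$ in $[\mathcal R]$ (the first step of Ioana's lemma), and then replaces the inner ``$\exists T\in[\mathcal T]$'' by ``$\exists l$ with $M(\mathcal T\cap U_l)>1-\epsilon$ and $M(\mathcal S\cap U_iU_lU_j)<\epsilon$'', which is precisely the open condition whose complement you are proving closed via your tracking step. One correction to your ``Combining'' sentence: for the set where $\inf_{T\in[\mathcal T]}\max_i M(\mathcal S\cap U_iTV_i)$ is $\geq\epsilon$ to be closed you need this infimum to be \emph{upper} semicontinuous (equivalently, the sublevel set $\{<\epsilon\}$ open), and the dependence on $\mathcal S$ is handled not by lower semicontinuity for the lower $M$-topology but by the plain continuity, in the strong topology, of $\mathcal S\mapsto M(\mathcal S\cap A)$ for a fixed $A$ of finite $M$-measure; with that fix your proof goes through.
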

\begin{proof}
	We show that the complement of $\prec$ in $\Sub(\mathcal R)\times\Sub(\mathcal R)$ is $G_\delta$.
	Let us fix $(U_i)$  dense in $\mathcal R$. 
	It is then not hard to check that $\mathcal S\not\prec\mathcal T$ iff there is a sequence $(T_n)$ in the full group of $T$ such that $M(\mathcal S\cap U_iT_nU_j)\to 0$ for all $i,j\in\N$ (this is essentially the first step of the proof of \cite[Lem.~1.7]{ioanaUniquenessGroupMeasure2012}). 
	It follows that $\mathcal S\not\prec\mathcal T$ if and only if for every $k\in\N$ and $\epsilon>0$, we can find $T\in[\mathcal T]$ such that 
	\begin{equation}\label{eq: intertwining epsilon}
	\forall i,j\in\{1,\dots,k\},\quad M(\mathcal S\cap U_i T U_j)<\epsilon.
	\end{equation}
	Now by density of $(U_i)$, we finally have $\mathcal S\not\prec\mathcal T$ iff for every $k\in\N$ and $\epsilon>0$, there is $l\in\N$ such that 
	$M(\mathcal T\cap U_l)>1-\epsilon$ and for all $i,j\in\{1,\dots,k\}$:
	\[
	M(\mathcal S\cap U_iU_lU_j)<\epsilon.
	\]
	It is now straightforward to check that this last condition defines a $G_\delta$ set, so we are done.
\end{proof}

\begin{theorem}\label{thm: meager orbits}
	Let $\mathcal R$ be an ergodic p.m.p.\ equivalence relation, consider the $[\mathcal R]$-action on the Polish space $\Subhyp(\mathcal R)$ of hyperfinite subequivalence relations of $\mathcal R$.
	Then all $[\mathcal R]$-orbits in $\Subhyp(\mathcal R)$ are meager.
\end{theorem}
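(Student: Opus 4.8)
The strategy is to exhibit a single invariant, namely the $\prec$-relation introduced above, whose value separates the orbit of any fixed $\mathcal S$ from a dense (in fact comeager) set of subequivalence relations, and then invoke topological $0$--$1$-type reasoning. Fix a hyperfinite $\mathcal S\in\Subhyp(\mathcal R)$ whose orbit we want to show is meager. I would consider the set
\[
\mathbf D_{\mathcal S}=\{\mathcal T\in\Subhyp(\mathcal R)\colon \mathcal S\not\prec\mathcal T\}.
\]
Since $\prec$ is $F_\sigma$ by Lemma \ref{lem: intertw is Fsigma}, the slice $\{\mathcal T\colon\mathcal S\prec\mathcal T\}$ is $F_\sigma$ and hence $\mathbf D_{\mathcal S}$ is $G_\delta$. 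The two key points to establish are then: first, that $\mathbf D_{\mathcal S}$ is \emph{dense} in $\Subhyp(\mathcal R)$, so that it is comeager; and second, that the entire $[\mathcal R]$-orbit of $\mathcal S$ is \emph{disjoint} from $\mathbf D_{\mathcal S}$, because $\mathcal S\prec\mathcal S$ and $\prec$ is $[\mathcal R]$-invariant (so $U\cdot\mathcal S\prec U\cdot\mathcal S$, forcing $\mathcal S\prec U\cdot\mathcal S$ after applying invariance with $U\inv$, hence $U\cdot\mathcal S\notin\mathbf D_{\mathcal S}$). Combining the two, the orbit of $\mathcal S$ is contained in the meager set $\Subhyp(\mathcal R)\setminus\mathbf D_{\mathcal S}$, which is exactly what we want.

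**Density of $\mathbf D_{\mathcal S}$.** This is where the examples from the introduction and Theorem \ref{thm: closure orbit contains hf} come in. Since finite subequivalence relations are dense in $\Subhyp(\mathcal R)$, it suffices to approximate any finite $\mathcal R_0\in\Sub(\mathcal R)$ by some $\mathcal T$ with $\mathcal S\not\prec\mathcal T$. The natural candidate for $\mathcal T$ is a subequivalence relation in the $[\mathcal R]$-orbit (or orbit closure) of an aperiodic everywhere-infinite-index relation $\mathcal S_{\infty}$, built as in the proof of Theorem \ref{thm: closure orbit contains hf}: one transports a copy of $\mathcal R_0$ into $\mathcal S_{\infty}$ and then pushes $\mathcal S_{\infty}$ restricted to the fundamental domain $Y_1$ toward $\Delta_{Y_1}$. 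The point is that, by Ioana's intertwining machinery \cite[Lem.\ 1.7]{ioanaUniquenessGroupMeasure2012}, for a \emph{fixed} $\mathcal S$ the failure $\mathcal S\not\prec\mathcal T$ is a ``smallness of $\mathcal T$ relative to $\mathcal S$'' condition, and a relation $\mathcal T$ close to a finite relation but lying in the orbit-closure of something with everywhere infinite index can be made to satisfy it — concretely, because the full group of such a $\mathcal T$ contains elements that are nearly disjoint from all double cosets $U_i\,\cdot\,U_j$ of $\mathcal S$. I would make this precise using the reformulation in the proof of Lemma \ref{lem: intertw is Fsigma}: I need, for every $k$ and $\epsilon$, an element $T\in[\mathcal T]$ with $M(\mathcal S\cap U_iTU_j)<\epsilon$ for all $i,j\le k$, and this is arranged exactly as $\mathcal S_{\infty}$ is pushed toward $\Delta$, since $\Delta_X$ is the extreme case $\mathcal S\not\prec\Delta_X$ whenever $\mathcal S$ is aperiodic.

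**Main obstacle.** The delicate part is the density argument: I must produce, near an arbitrary finite $\mathcal R_0$, a hyperfinite $\mathcal T$ with $\mathcal S\not\prec\mathcal T$, and this requires combining the construction of Theorem \ref{thm: closure orbit contains hf} with a genuine use of Ioana's intertwining to certify non-intertwinability rather than just orbit-closure membership. In particular one must check that $\mathcal S\not\prec\mathcal T$ survives the transport by the element $T\in[\mathcal R]$ with $T\cdot\mathcal S_0=\mathcal R_0$ and the limiting process $\tilde T_k\cdot\mathcal S\to\mathcal S_0$ — here $[\mathcal R]$-invariance of $\prec$ handles the transport, while the limit requires knowing that $\{\mathcal T\colon\mathcal S\not\prec\mathcal T\}$ is not too small, e.g.\ that it contains a whole orbit-closure of an infinite-index relation, so that some $\mathcal T$ in it lies within $\epsilon$ of $\mathcal R_0$. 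Everything else — the $G_\delta$-ness of $\mathbf D_{\mathcal S}$, the orbit-disjointness via $\mathcal S\prec\mathcal S$ and invariance, and the final Baire-category conclusion — is routine once the density statement is in hand.
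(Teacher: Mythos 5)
Your overall architecture is sound in two of its three pieces: the set $\mathbf D_{\mathcal S}=\{\mathcal T\colon \mathcal S\not\prec\mathcal T\}$ is indeed $G_\delta$ by Lemma \ref{lem: intertw is Fsigma}, and the $[\mathcal R]$-orbit of $\mathcal S$ is indeed disjoint from it, since $\mathcal S\prec\mathcal S$ and $\prec$ is $[\mathcal R]$-invariant. The genuine gap is the density of $\mathbf D_{\mathcal S}$ for an \emph{arbitrary} hyperfinite $\mathcal S$, which is the entire content of the theorem and which your sketch does not establish. Your proposed witnesses $\mathcal T$ live in the $[\mathcal R]$-orbit (or orbit closure) of a fixed aperiodic everywhere-infinite-index relation $\mathcal S_\infty$; but by the $[\mathcal R]$-invariance of $\prec$, the condition $\mathcal S\not\prec U\cdot\mathcal S_\infty$ is equivalent to the single condition $\mathcal S\not\prec\mathcal S_\infty$, so your density claim collapses to: for every hyperfinite $\mathcal S$ there is some aperiodic everywhere-infinite-index hyperfinite $\mathcal S_\infty$ (depending on $\mathcal S$) with $\mathcal S\not\prec\mathcal S_\infty$. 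This is false for any \emph{fixed} choice of $\mathcal S_\infty$ (take $\mathcal S=\mathcal S_\infty$), and for a choice depending on $\mathcal S$ it is precisely the statement that needs a proof; the heuristic that $[\mathcal T]$ ``contains elements nearly disjoint from all double cosets of $\mathcal S$'' does not engage with this dependence on $\mathcal S$, and verifying it amounts to redoing the hard part of Theorem \ref{thm: infinite index everywhere characterization} relative to $\mathcal S$ rather than relative to $\Delta_X$. (Passing to the orbit \emph{closure} does not help either: $\mathbf D_{\mathcal S}$ is $G_\delta$, not closed, so membership does not pass to limits.)

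The paper avoids having to prove non-intertwining for every $\mathcal S$ by arguing by contradiction with the topological $0$--$1$ law: if some orbit were non-meager then, since there is a dense orbit, there would be a comeager orbit, so the orbit equivalence relation $E$ would be comeager in $\Subhyp(\mathcal R)\times\Subhyp(\mathcal R)$. It then suffices to exhibit \emph{one} pair with $\mathcal S\not\prec\mathcal T$ and both orbits dense --- namely $\mathcal S$ an ergodic hyperfinite subrelation (Dye) and $\mathcal T\subseteq\mathcal S$ aperiodic with diffuse ergodic decomposition (Example \ref{ex: diffuse example}), where non-intertwining is certified by Ioana's lemma: a corner of an ergodic relation cannot have each class covered by $k$ classes of a relation whose ergodic decomposition is diffuse. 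The invariant dense $G_\delta$ complement of $\prec$ then meets the comeager set $E$, contradicting $\mathcal S'\prec\mathcal S'$. If you want to keep your direct (and in principle cleaner) route, you must supply, for each hyperfinite $\mathcal S$, a concrete $\mathcal S_\infty$ with dense orbit and a proof that $\mathcal S\not\prec\mathcal S_\infty$; as written, that step is missing.
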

\begin{proof}
	Assume by contradiction there is a non meager $[\mathcal R]$-orbit. Since there is a dense $[\mathcal R]$-orbit in $\Sub(\mathcal R)$, 
	the topological $0$-$1$ law yields that there is a comeager orbit.
	Denoting by $E$ the equivalence relation generated by the $[\mathcal R]$-action on $\Sub(\mathcal R)$, we deduce that $E$ is comeager in $\Subhyp(\mathcal R)\times\Subhyp(\mathcal R)$.
	
	By \cite[Thm.\ 4]{dyeGroupsMeasurePreserving1959}, the p.m.p.\ equivalence relation $\mathcal R$ contains an ergodic hyperfinite subequivalence relation $\mathcal S$.
	Let $\mathcal T$ be an aperiodic subequivalence relation of $\mathcal S$ with diffuse ergodic decomposition such as the one coming from example \ref{ex: diffuse example} once we identify $\mathcal S$ to $\mathcal R_0$. The following claim will essentially finish our proof.
	
	\begin{claim}
		We have  $\mathcal S\not\prec\mathcal T$.
	\end{claim}
	\begin{cproof}
	Suppose $\mathcal S\prec\mathcal T$. Then by \cite[Lem.\ 1.7]{ioanaUniquenessGroupMeasure2012}, there is a nonzero $\varphi\in[[\mathcal R]]$ and $k\in\N$ such that 
 	if $A=\dom\varphi$ and $B=\rng\varphi$, then every $\varphi\cdot \mathcal S_{\restriction A}$-class is contained in the union of at most $k$ classes of $\mathcal T_{\restriction B}$.
 	Since $\mathcal T$ has diffuse ergodic decomposition, we can find $C_1,\dots,C_{k+1}$ partitioning $B$ which are all $\mathcal T_{\restriction B}$-invariant and have measure $\frac 1{(k+1)\mu(B)}$. 
 	
 	Since $\mathcal S$ is ergodic, the $\varphi\cdot\mathcal S_{\restriction A}$-class of almost every $
	x\in B$ intersects all the $C_i$, and since they are $\mathcal T_{\restriction B}$-invariant, we conclude that almost every $\varphi\cdot \mathcal S_{\restriction A}$-class cannot be  contained in less than $k+1$ classes of $\mathcal T_{\restriction B}$, a contradiction.
	\end{cproof}
	
	Since $\prec$ is $[\mathcal R]$-invariant, its complement also is. This complement is moreover $G_\delta$ by Lemma \ref{lem: intertw is Fsigma}, and by the above claim it contains the subset $[\mathcal R]\cdot \mathcal S\times [\mathcal R]\cdot \mathcal T$, which is dense in $\Subhyp(\mathcal R)\times\Subhyp(\mathcal R)$ by Theorem \ref{thm: closure orbit contains hf}. 
	So the complement of $\prec$ is comeager, and it should thus intersect $E$. 
	This means that one can find a subequivalence relation $\mathcal S$ and $T\in[\mathcal R]$ such that $\mathcal S\not\prec T\cdot \mathcal S$, a contradiction.
\end{proof}
\begin{remark}
	Our proof is inspired by the Glasner-Weiss proof that $\Aut(X,\mu)$ does not have comeager conjugacy classes, replacing Rokhlin's lemma by Theorem \ref{thm: closure orbit contains hf} and Del Junco's disjointness by Ioana's interwining relation $\not\prec$ (see \cite[Sec.\ 3]{glasnerTopologicalGroupsRokhlin2008}). 
\end{remark}
\begin{corollary}\label{cor: orbits meager R0}
	Let $\mathcal R_0$ be the hyperfinite ergodic p.m.p.\ equivalence relation. Then all the 
	$[\mathcal R_0]$-orbits are meager in $\Sub(\mathcal R_0)$
\end{corollary}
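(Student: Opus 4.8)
The plan is to reduce the statement immediately to Theorem \ref{thm: meager orbits}. The only point to check is that when the ambient relation is $\mathcal R_0$ itself, one has $\Subhyp(\mathcal R_0) = \Sub(\mathcal R_0)$, i.e.\ \emph{every} subequivalence relation of $\mathcal R_0$ is hyperfinite. This is standard: writing $\mathcal R_0 = \bigcup_n \mathcal F_n$ as an increasing union of finite Borel subequivalence relations and taking any $\mathcal S \in \Sub(\mathcal R_0)$, the sets $\mathcal S \cap \mathcal F_n$ form an increasing sequence whose union is $\mathcal S$, and each $\mathcal S \cap \mathcal F_n$ is an equivalence relation (being an intersection of two equivalence relations) which is finite since it is contained in the finite relation $\mathcal F_n$. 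Hence $\mathcal S$ is hyperfinite.

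Granting this, $\Subhyp(\mathcal R_0)$ and $\Sub(\mathcal R_0)$ coincide as sets, and since $\Subhyp(\mathcal R_0)$ carries the subspace topology inherited from $\Sub(\mathcal R_0)$, they coincide as topological spaces. Meagerness being an intrinsic notion, a subset of $\Sub(\mathcal R_0) = \Subhyp(\mathcal R_0)$ is meager in one space precisely when it is meager in the other. Applying Theorem \ref{thm: meager orbits} with $\mathcal R = \mathcal R_0$, every $[\mathcal R_0]$-orbit is meager in $\Subhyp(\mathcal R_0)$, hence meager in $\Sub(\mathcal R_0)$, which is the claim.

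There is no real obstacle here: all the substance is already contained in Theorem \ref{thm: meager orbits}, and this corollary merely specializes that result to the ergodic hyperfinite relation, where the hyperfinite subequivalence relations exhaust the whole space of subequivalence relations. The argument above is just the bookkeeping that makes this specialization precise.
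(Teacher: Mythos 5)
Your proof is correct and follows the same route as the paper, which likewise deduces the corollary immediately from Theorem \ref{thm: meager orbits} via the fact that every subequivalence relation of a hyperfinite relation is hyperfinite; you merely spell out the standard intersection argument for that fact, which the paper leaves implicit.
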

\begin{proof}
	Again this follows directly from the above theorem since every subequivalence relation of a hyperfinite 
	equivalence relation is hyperfinie.
\end{proof}
\section{The uniform metric and complexity calculations}\label{sec: uniform}

We begin this final section by introducing a natural metric on the space of all non-singular equivalence relations. 
Given two non-singular equivalence relations $\mathcal R_1$ and $\mathcal R_2$, denote by $\mathfrak C(\mathcal R_1,\mathcal R_2)$ the set of all $A\in\MAlg(X,\mu)$ such that $\mathcal R_{1\restriction A}=\mathcal R_{2\restriction A}$. Define
$$d_u(\mathcal R_1,\mathcal R_2)=1-\sup_{A\in\mathfrak C(\mathcal R_1,\mathcal R_2)}\mu(A)=\inf_{A\in \mathfrak C(\mathcal R_1,\mathcal R_2)}1-\mu(A).$$
Let us check that this is indeed a metric. Symmetry is clear.
For Hausdorffness, if $d_u(\mathcal R_1,\mathcal R_2)=0$ then we find $(X_n)$ with $\mu(X_n)\geq 1-2^{-n}$ and $\mathcal R_{1\restriction X_n}=\mathcal R_{2\restriction X_n}$. By the Borel-Cantelli lemma almost every $x\in X$ is in all but finitely many $X_n$, which yields a full measure set restricted to which $\mathcal R_1$ and $\mathcal R_2$ coincide.

We finally need to show that the triangle inequality holds. Observe that
if $A_1\in \mathfrak C(\mathcal R_1,\mathcal R_2)$ and $A_2\in \mathfrak C(\mathcal R_2,\mathcal R_3)$, then $A_1\cap A_2\in \mathfrak C(\mathcal R_1,\mathcal R_3)$, moreover
$$X\setminus (A_1\cap A_2)\subseteq (X\setminus A_1)\cup (X\setminus A_2),$$ so by taking measures and infimums we get the desired triangle inequality.
\begin{lemma}
	The uniform metric refines the strong topology. 
\end{lemma}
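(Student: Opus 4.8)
The plan is to show that $d_u$-convergence implies convergence in the strong topology; since both topologies are metrizable, this suffices to conclude that $d_u$ refines the strong topology on $\Sub(\mathcal R)$. Fix a finite measure $m$ equivalent to the fibered measure $M$ on $\mathcal R$, so that the strong topology on $\Sub(\mathcal R)$ is the one induced by $d_m(\mathcal A,\mathcal B)=m(\mathcal A\bigtriangleup\mathcal B)$. So let $(\mathcal S_n)$ and $\mathcal S$ be subequivalence relations of $\mathcal R$ with $d_u(\mathcal S_n,\mathcal S)\to 0$; the goal is to prove $m(\mathcal S_n\bigtriangleup\mathcal S)\to 0$.

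First I would unwind the definition of $d_u$: if $d_u(\mathcal S_n,\mathcal S)<\delta$ then $\sup_{A\in\mathfrak C(\mathcal S_n,\mathcal S)}\mu(A)>1-\delta$, so there is $A_n\in\mathfrak C(\mathcal S_n,\mathcal S)$ with $\mu(X\setminus A_n)<\delta$, that is $\mathcal S_{n\restriction A_n}=\mathcal S_{\restriction A_n}$. The elementary but key observation is that for a pair $(x,y)\in\mathcal R$ with $x,y\in A_n$ one has $(x,y)\in\mathcal S_n\iff(x,y)\in\mathcal S$ (up to $M$-null), whence
\[
\mathcal S_n\bigtriangleup\mathcal S\ \subseteq\ \pi_1^{-1}(X\setminus A_n)\ \cup\ \pi_2^{-1}(X\setminus A_n),
\]
where $\pi_1,\pi_2\colon\mathcal R\to X$ denote the two coordinate projections. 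It then only remains to control the $m$-measure of sets of the form $\pi_i^{-1}(D)$ in terms of $\mu(D)$.

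This is where the (mild) subtlety lies: $M$ may be infinite, so one cannot argue directly with $M(\mathcal S_n\bigtriangleup\mathcal S)$, but the pushforwards $(\pi_i)_*m$ are \emph{finite} measures on $X$ that are absolutely continuous with respect to $\mu$. For $\pi_1$ this is immediate, since $M(\pi_1^{-1}(D))=\int_D\abs{[x]_{\mathcal R}}\,d\mu=0$ whenever $\mu(D)=0$, hence $m(\pi_1^{-1}(D))=0$; for $\pi_2$ one uses $\pi_2^{-1}(D)=\sigma(\pi_1^{-1}(D))$ together with the fact that the flip $\sigma$ quasi-preserves $m$ (a manifestation of non-singularity, \cite[Prop.\ 8.2]{kechrisTopicsOrbitEquivalence2004}). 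By the standard $\varepsilon$--$\delta$ reformulation of absolute continuity for finite measures (\cite[Lem.\ 4.2.1]{cohnMeasureTheorySecond2013}), given $\varepsilon>0$ there is $\delta>0$ such that $\mu(D)<\delta$ forces both $(\pi_1)_*m(D)<\varepsilon$ and $(\pi_2)_*m(D)<\varepsilon$. Plugging $D=X\setminus A_n$ into the displayed inclusion then gives $m(\mathcal S_n\bigtriangleup\mathcal S)<2\varepsilon$ as soon as $d_u(\mathcal S_n,\mathcal S)<\delta$, which finishes the argument. I expect the only real point to watch is precisely this passage from the possibly infinite measure $M$ to the finite measure $m$; everything else is bookkeeping.
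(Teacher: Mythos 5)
Your proof is correct, but it takes a genuinely different route from the paper, which disposes of the lemma in one line by citing Kechris: the uniform metric refines the uniform topology of \cite[Sec.\ 4.6]{kechrisSpaceMeasurepreservingEquivalence}, which in turn refines the strong topology. You instead give a direct, self-contained argument: from $d_u(\mathcal S_n,\mathcal S)<\delta$ you extract $A_n$ with $\mu(X\setminus A_n)<\delta$ and $\mathcal S_{n\restriction A_n}=\mathcal S_{\restriction A_n}$, observe the inclusion $\mathcal S_n\bigtriangleup\mathcal S\subseteq \pi_1^{-1}(X\setminus A_n)\cup\pi_2^{-1}(X\setminus A_n)$ up to an $M$-null set, and then control $m$ of these preimages via absolute continuity of $(\pi_1)_*m$ and $(\pi_2)_*m$ with respect to $\mu$ --- the latter correctly handled through the flip $\sigma$ and non-singularity, and the passage from the $\sigma$-finite $M$ to the finite $m$ is exactly the point that needs care and you address it properly. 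What your argument buys is independence from Kechris' monograph (whose comparison of topologies is set up in the p.m.p.\ framework, whereas this paper works with general non-singular relations), at the cost of a page of bookkeeping versus the paper's one-sentence citation. Both are valid; yours is arguably the more robust proof to have on record in the non-singular setting.
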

\begin{proof}
The uniform metric clearly refines the uniform topology defined in \cite[Sec.\ 4.6]{kechrisSpaceMeasurepreservingEquivalence}, which in turn refines the strong topology.
\end{proof}

\begin{remark}
	The results from \cite[Thm.\ 1]{ioanaSubequivalenceRelationsPositivedefinite2009} applied to the subequivalence relation $\mathcal R_1\cap \mathcal R_2$ yield that the uniform metric actually induces the uniform topology. 
\end{remark}

In what follows, we freely identify the Cantor space $\{0,1\}^\N$ to the set of subsets of $\N$, 
and the continuous reduction alluded to is thus defined on the whole $\{0,1\}^\N$.

\begin{proposition} \label{prop: continuous reduction}
	Suppose $\mathcal R$ is an aperiodic non-singular equivalence relation.
	There is a continuous reduction from the set $\Subfin(\N)$ of finite subsets of $\N$ 
	to the complement of the set of subequivalence relations which are of infinite index, 
	and to the set of finite index subequivalence relations, both for the uniform metric.
\end{proposition}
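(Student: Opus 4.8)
The plan is to produce a single continuous map $f\colon 2^{\N}\to\Sub(\mathcal R)$, the domain being the space of subsets of $\N$ with the product topology, which reduces the set of finite subsets of $\N$ simultaneously to the set of finite‑index subequivalence relations and to the complement of the set of infinite‑index subequivalence relations. To build it, first use aperiodicity of $\mathcal R$ to fix a Borel partition $X=\bigsqcup_{n\geq 1}X_n$ into complete sections of $\mathcal R$ (each $X_n$ meeting every $\mathcal R$-class) with $\mu(X_n)=2^{-n}$. Note that non-singularity automatically makes each such $X_n$ of positive measure: if $\mu(X_n)=0$ then $M(\mathcal R\cap(X_n\times X))=\int_{X_n}\abs{[x]_{\mathcal R}}\,d\mu(x)=0$, and since the flip $\sigma$ quasi-preserves $M$ and carries $\mathcal R\cap(X_n\times X)$ onto $\mathcal R\cap(X\times X_n)$, we get $\int_X\abs{[x]_{\mathcal R}\cap X_n}\,d\mu(x)=M(\mathcal R\cap(X\times X_n))=0$, so $X_n$ would miss $\mu$-almost every class. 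For $T\subseteq\N$ write $X_T=\bigcup_{n\in T}X_n$ and set
\[
\mathcal S_T:=\mathcal R_{\restriction X_T}\ \sqcup\ \bigsqcup_{n\notin T}\mathcal R_{\restriction X_n}\ \in\ \Sub(\mathcal R),
\]
so that for $x\in X_a$, $y\in X_b$ with $(x,y)\in\mathcal R$ we have $(x,y)\in\mathcal S_T$ iff $a=b$ or $a,b\in T$; it is routine to check this is a Borel subequivalence relation of $\mathcal R$. Finally let $f(S):=\mathcal S_{\N\setminus S}$.

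The index computation is then immediate. Since each $X_n$ meets every $\mathcal R$-class, for every $x\in X$ the class $[x]_{\mathcal R}$ decomposes into the $\mathcal S_T$-classes $[x]_{\mathcal R}\cap X_n$ for $n\notin T$, together with the single class $[x]_{\mathcal R}\cap X_T$ when $T\neq\emptyset$; hence the number of $\mathcal S_T$-classes inside $[x]_{\mathcal R}$ equals $\abs{\N\setminus T}$ (up to adding $1$ when $T\neq\emptyset$, which is irrelevant here). In particular $\mathcal S_T$ has finite index exactly when $\N\setminus T$ is finite, is of infinite index exactly when $\N\setminus T$ is infinite, and is always one or the other. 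Taking $T=\N\setminus S$, this says that $f(S)$ has finite index, equivalently $f(S)$ fails to be of infinite index, precisely when $S$ is finite, giving both announced reductions at once.

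It remains to verify continuity of $f$ for the uniform metric. Suppose $S$ and $S'$ agree on $\{1,\dots,N\}$, and let $A:=X_1\cup\cdots\cup X_N$, so $\mu(A)=1-2^{-N}$. For $x\in X_a$, $y\in X_b$ with $a,b\leq N$ and $(x,y)\in\mathcal R$, membership of $(x,y)$ in $\mathcal S_{\N\setminus S}$, respectively in $\mathcal S_{\N\setminus S'}$, depends only on $a$, $b$ and on whether $a,b\in\N\setminus S$, respectively $a,b\in\N\setminus S'$, and these conditions coincide since $a,b\leq N$. Thus $\mathcal S_{\N\setminus S}$ and $\mathcal S_{\N\setminus S'}$ restrict to the same relation on $A$, i.e.\ $A\in\mathfrak C(f(S),f(S'))$, whence $d_u(f(S),f(S'))\leq 2^{-N}$ and $f$ is continuous.

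The delicate point, which I expect to be the main obstacle, is the very first step: exhibiting a Borel partition of $X$ into complete sections of $\mathcal R$ of prescribed summable (hence positive) measures. In the probability measure preserving case this is immediate from the Rokhlin lemma or Maharam's lemma. In the general non-singular aperiodic case one invokes the marker lemma — for instance applied to an aperiodic element $T\in[\mathcal R]$, whose existence follows from aperiodicity of $\mathcal R$ and whose orbit equivalence relation is contained in $\mathcal R$, so that complete sections of $\langle T\rangle$ are complete sections of $\mathcal R$. Granting this, all the remaining verifications are elementary bookkeeping.
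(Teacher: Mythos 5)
Your construction is correct and is essentially the paper's own proof: the paper likewise partitions $X$ into complete sections $X_n$ of measure $2^{-n-1}$ (obtained via Maharam's lemma), sends $B\subseteq\N$ to the relation gluing $\mathcal R$ over $\bigcup_{n\notin B}X_n$ while keeping the restrictions $\mathcal R_{\restriction X_n}$ for $n\in B$ separate, notes the index is $\abs{B}+1$ when $B$ is finite and infinite otherwise, and proves uniform continuity by observing that agreement of $B,B'$ on $\{1,\dots,N\}$ forces the two relations to coincide on $\bigcup_{k\leq N}X_k$. Your version differs only by the relabelling $T=\N\setminus S$ and by your (welcome) extra attention to producing the complete sections in the genuinely non-singular case, a point the paper also only gestures at via Maharam's lemma.
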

\begin{proof}
	Let $(X_n)$ be a partition of $X$ into sets which intersect almost every $\mathcal R$-class 
	(e.g. obtained via Maharam's lemma as sets of $\mathcal R$-conditional measure  constant equal to $2^{-n-1}$). 
	Let $B\subseteq\N$. We associate to $B$ the set $Y_B=\bigcup_{n\in \N\setminus B} X_n$ 
	and let the reduction be $B\mapsto \mathcal R_B$ where
	$$\mathcal R_B=\left(\mathcal R\cap Y_B\times Y_B\right)\sqcup\bigcup_{n\in B} \mathcal R\cap (X_n\times X_n)$$ 
	The continuity of our reduction for the uniform metric is clear from the definition 
	(if two subsets $B$ and $B'$ coincide on the first $n$ integers, 
	then the associated equivalence relations coincide on  $\bigcup_{k<n}X_k$). 
	Moreover, if $B$ is finite we see that $\mathcal R_B$ has finite index equal to $\abs B+1$, 
	and if not it has infinite index as wanted. 
\end{proof}

\begin{corollary}
	Let $\mathcal R$ be a non-singular aperiodic equivalence relation.
	The set of subequivalence relations of $\mathcal R$ with infinite index is $G_\delta$-hard in the uniform metric, in particular it is $G_\delta$-complete in the strong topology.
\end{corollary}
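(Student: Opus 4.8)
The plan is to deduce the statement from the preceding Proposition together with one classical fact from descriptive set theory and the observation (recalled just above) that the uniform metric refines the strong topology. The classical fact is that $\mathrm{Inf}=\{B\subseteq\N:B\text{ infinite}\}$ is $G_\delta$-complete in $2^\N=\{0,1\}^\N$: it is $G_\delta$ since $\mathrm{Inf}=\bigcap_n\bigcup_{m>n}\{B:m\in B\}$, and its hardness is standard. The map $B\mapsto\mathcal R_B$ built in the proof of the Proposition is continuous for the uniform metric, and the index computation there shows $\mathcal R_B$ has infinite index exactly when $B$ is infinite; hence $B\mapsto\mathcal R_B$ continuously reduces $\mathrm{Inf}$ to the set of infinite-index subequivalence relations. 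As $\mathrm{Inf}$ is $G_\delta$-hard and continuous reductions compose, this set is $G_\delta$-hard for the uniform metric. Since the uniform metric topology is finer than the strong topology, the very same map is a fortiori continuous into $\Sub(\mathcal R)$ with its strong topology, so the set of infinite-index subequivalence relations is $G_\delta$-hard for the strong topology as well.

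To promote this to $G_\delta$-completeness for the (Polish) strong topology it remains to place the set of infinite-index subequivalence relations in $G_\delta$. Fix a countable group $\Gamma=\{\gamma_k:k\in\N\}$ with $\mathcal R=\mathcal R_\Gamma$, and for $\mathcal S\in\Sub(\mathcal R)$ and $N\in\N$ let $g^{\mathcal S}_N(x)$ be the number of distinct $\mathcal S$-classes among $\gamma_0x,\dots,\gamma_Nx$. Since $\Gamma x=[x]_{\mathcal R}$ for almost every $x$, the sequence $g^{\mathcal S}_N(x)$ increases to the number of $\mathcal S$-classes in $[x]_{\mathcal R}$, whence $\mathcal S$ has infinite index iff for every $n\geq 1$ one has $\mu(\{x:g^{\mathcal S}_N(x)\geq n\})\to 1$ as $N\to\infty$. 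For fixed $N$, the set $\{x:g^{\mathcal S}_N(x)\geq n\}$ is a fixed finite Boolean combination of the sets $\{x:(\gamma_ix,\gamma_jx)\in\mathcal S\}$ with $i,j\leq N$, and each of the latter depends continuously on $\mathcal S\in\Sub(\mathcal R)$ when viewed in $\MAlg(X,\mu)$, being obtained from $\mathcal S$ by intersection with the fixed graph of $\gamma_j\gamma_i\inv$ followed by measure-algebra homeomorphisms. Therefore $\mathcal S\mapsto\mu(\{x:g^{\mathcal S}_N(x)\geq n\})$ is continuous, each $V_{N,n,m}=\{\mathcal S:\mu(\{x:g^{\mathcal S}_N(x)\geq n\})>1-1/m\}$ is open, and the set of infinite-index subequivalence relations equals $\bigcap_{n,m\geq 1}\bigcup_{N\in\N}V_{N,n,m}$, a $G_\delta$ set. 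Combined with the hardness above, it is $G_\delta$-complete for the strong topology.

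I expect no genuine obstacle: the essential content is already packaged in the Proposition, and what remains is the routine argument of reducing a complete set and then transferring continuity to the coarser topology, together with the $G_\delta$ upper bound. The only point demanding a little care is that upper bound, where one just confirms that Boolean combinations and $\mu$-measures of the sets $\{x:(\gamma_ix,\gamma_jx)\in\mathcal S\}$ vary continuously with $\mathcal S$, which is a standard measure-algebra verification.
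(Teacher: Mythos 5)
Your proof is correct and follows the same route the paper intends: the corollary is stated without proof precisely because it is the composition of the Proposition's continuous reduction (which sends $\mathrm{Inf}$ into the infinite-index set) with the observation that the uniform metric refines the strong topology. Your explicit $G_\delta$ upper bound via the counting functions $g^{\mathcal S}_N$ is sound and merely supplies a detail the paper delegates to Kechris' monograph.
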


The next statements are all about the strong topology.

\begin{proposition}
	Let $\mathcal R$ be a nonsingular aperiodic equivalence relation.
	The space of finite index subequivalence relations is $F_{\sigma\delta}$-hard if and only if $E$ has infinitely many ergodic components, otherwise it is $F_\sigma$-complete.
\end{proposition}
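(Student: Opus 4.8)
The plan is to first pin down the complexity of the set $\mathrm{FI}(\mathcal R)$ of finite index subequivalence relations in the strong topology, and then prove hardness in each of the two regimes by reducing a suitable complete set. For $\mathcal S\in\Sub(\mathcal R)$ write $n_{\mathcal S}\colon X\to\{1,2,\dots,\infty\}$ for the ($\mathcal R$-invariant, measurable) function counting the $\mathcal S$-classes inside an $\mathcal R$-class, and fix a countable group $\Gamma=\{\gamma_i:i\in\N\}$ with $\mathcal R=\mathcal R_\Gamma$, so that $[x]_{\mathcal R}=\Gamma x$ for a.e.\ $x$ and hence $\{n_{\mathcal S}\geq k\}=\bigcup_{\mathbf i}C_{\mathbf i}(\mathcal S)$ over tuples $\mathbf i=(i_1,\dots,i_k)\in\N^k$, where $C_{\mathbf i}(\mathcal S)$ is the set of $x$ with $\gamma_{i_1}x,\dots,\gamma_{i_k}x$ pairwise $\mathcal S$-inequivalent. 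The point is that each map $\mathcal S\mapsto\{y:(y,\gamma y)\in\mathcal S\}\in\MAlg(X,\mu)$ is continuous for the strong topology — identifying this set with $\mathcal S\cap\mathrm{graph}(\gamma)$, one uses that intersecting with a fixed set is $d_m$-nonexpansive and then Lemma \ref{lem: unif cont equiv meas} to compare $m$ with $M$ on the finite-$M$-measure graph of $\gamma$, together with the fact that non-singular maps act by uniform homeomorphisms on $\MAlg(X,\mu)$ — so, composing with finite unions and complements in $\MAlg(X,\mu)$, the function $\mathcal S\mapsto\mu(C_{\mathbf i}(\mathcal S))$ is continuous and therefore $h_k\colon\mathcal S\mapsto\mu(\{n_{\mathcal S}\geq k\})$ is lower semicontinuous. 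It follows that each $B_k:=\{\mathcal S:n_{\mathcal S}\leq k\text{ a.e.}\}=\{h_{k+1}=0\}$ is closed, and since $\mathcal S\in\mathrm{FI}(\mathcal R)$ exactly when $\lim_kh_k(\mathcal S)=0$, one gets $\mathrm{FI}(\mathcal R)=\bigcap_m\bigcup_k\{\mathcal S:h_k(\mathcal S)<1/m\}$, which is $F_{\sigma\delta}$.

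Next I would treat the case where $\mathcal R$ has only finitely many ergodic components $X_1,\dots,X_r$. Then $n_{\mathcal S}$, being $\mathcal R$-invariant, is a.e.\ constant on each $X_i$, so $\mathcal S\in\mathrm{FI}(\mathcal R)$ iff $n_{\mathcal S}\leq k$ a.e.\ for some $k$; that is, $\mathrm{FI}(\mathcal R)=\bigcup_kB_k$ is $F_\sigma$. Hardness is supplied directly by the previous proposition applied to $\mathcal R$: it gives a reduction of the $F_\sigma$-complete set of finite subsets of $\N$ to $\mathrm{FI}(\mathcal R)$ that is continuous for the uniform metric, hence for the coarser strong topology. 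So in this case $\mathrm{FI}(\mathcal R)$ is $F_\sigma$-complete, and in particular it is not $F_{\sigma\delta}$-hard.

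Finally, when $\mathcal R$ has infinitely many ergodic components the algebra of $\mathcal R$-invariant sets is infinite, so I would fix a partition $X=\bigsqcup_nX_n$ into $\mathcal R$-invariant sets of positive measure, each $\mathcal R_{\restriction X_n}$ being aperiodic. Applying the previous proposition to each $\mathcal R_{\restriction X_n}$ produces continuous maps $B\mapsto\mathcal R^{(n)}_B\in\Sub(\mathcal R_{\restriction X_n})$ with $\mathcal R^{(n)}_B$ of finite index iff $B$ is finite. Then the map $\Psi\colon(2^{\N})^{\N}\to\Sub(\mathcal R)$ defined by $\Psi((A_n)_n)=\bigsqcup_n\mathcal R^{(n)}_{A_n}$ is a subequivalence relation of $\mathcal R$ and is continuous for the strong topology by Lemma \ref{lem: continuous union on partition} (the graphs of the $\mathcal R_{\restriction X_n}$ form a pairwise disjoint family in $\mathcal R$), together with the continuity of the $A_n\mapsto\mathcal R^{(n)}_{A_n}$; and $\Psi((A_n))\in\mathrm{FI}(\mathcal R)$ iff every $\mathcal R^{(n)}_{A_n}$ has finite index iff every $A_n$ is finite. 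Since $\{(A_n)_n:\forall n,\ A_n\text{ finite}\}$ is $F_{\sigma\delta}$-complete (it is the countable power of the $F_\sigma$-complete set of finite subsets of $\N$), the map $\Psi$ witnesses that $\mathrm{FI}(\mathcal R)$ is $F_{\sigma\delta}$-hard, and together with the first paragraph this makes it $F_{\sigma\delta}$-complete. The stated equivalence now follows: infinitely many ergodic components yields $F_{\sigma\delta}$-hardness, while finitely many yields an $F_\sigma$ set, which precludes $F_{\sigma\delta}$-hardness.

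The hard part will be the first paragraph: checking that $h_k$ is lower semicontinuous in the strong topology, which is where the merely non-singular (as opposed to p.m.p.) hypothesis forces some care with the sets $\{y:(y,\gamma y)\in\mathcal S\}$. The rest is bookkeeping around the two reductions furnished by the preceding proposition and the elementary observation that $n_{\mathcal S}$ is $\mathcal R$-invariant.
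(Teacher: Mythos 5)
Your argument is correct, and its core coincides with the paper's: the two hardness reductions are exactly the ones used there, namely reusing the continuous reduction $B\mapsto \mathcal R_B$ of $\mathcal P_f(\N)$ from the preceding proposition when there are finitely many ergodic components, and gluing countably many copies of it over a partition of $X$ into $\mathcal R$-invariant pieces (via Lemma \ref{lem: continuous union on partition}) to reduce the $F_{\sigma\delta}$-complete set $\mathcal P_f(\N)^\N$ when there are infinitely many; likewise the observation that finitely many ergodic components force the set of finite index subrelations to equal $\bigcup_k\Sub_{[\leq k]}(\mathcal R)$ is the same. Where you genuinely diverge is the proof that each $\Sub_{[\leq k]}(\mathcal R)$ is closed: the paper argues by exhibiting, for $\mathcal S$ of index $>k$ somewhere, witnesses $\varphi_1,\dots,\varphi_k\in[[\mathcal R]]$ with common domain whose associated partial maps $\varphi_i\varphi_j\inv$ have graphs disjoint from $\mathcal S$, and then uses the integral formula for $M(\varphi_1\sqcup\cdots\sqcup\varphi_k\cap\mathcal S_n)$ to see that nearby relations still have index $>k$ on a positive measure set; you instead prove lower semicontinuity of $\mathcal S\mapsto\mu(\{n_{\mathcal S}\geq k\})$ by writing $\{n_{\mathcal S}\geq k\}$ as a countable union of sets $C_{\mathbf i}(\mathcal S)$ depending continuously on $\mathcal S$ (the continuity of $\mathcal S\mapsto\{x\colon (x,\gamma x)\in\mathcal S\}$ is right: the first-coordinate projection is an isometry from subsets of the graph of $\gamma$ with $d_M$ onto $(\MAlg(X,\mu),d_\mu)$, and non-singularity makes the translates act by homeomorphisms). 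Your route costs this continuity verification but buys, essentially for free, a self-contained proof that the set of finite index subequivalence relations is always $F_{\sigma\delta}$ — a fact the paper does not reprove but only records in the subsequent remark by citing Kechris; the paper's witness argument is more hands-on and proves only what the statement strictly needs.
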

\begin{proof}
	Denote by $\Sub_{[<\infty]}(\mathcal R)$ the space of finite index subequivalence relations.
	Note that by ergodicity, if $E$ has only finitely many ergodic components, then the space of finite index subequivalence relations is equal to the union over $k\in\N$ of the spaces $\Sub_{[\leq k]}(\mathcal R)$ of all subequivalence relations whose index is uniformly less than $k$. 
	
	Let us show that for each $k\in\N$, the set $\Sub_{[\leq k]}(\mathcal R)$ is closed.
	Suppose $\mathcal S\notin \Sub_{[\leq k]}(\mathcal R)$, then there is a positive measure set of  $x\in X$ such that $[x]_{\mathcal R}$ contains at least $k+1$ distinct $\mathcal S$ classes. 
	We thus find $A\subseteq X$ of positive measure and $\varphi_1,...\varphi_{k}\in[[\mathcal R]]$ with common domain $A$ and such that $A,\rng \varphi_1,\dots,\rng\varphi_k$ are and $\mathcal S$ is disjoint from the union of the graphs of the $\varphi_i\varphi_j\inv$. In other words we find a finite subequivalence relation of $\mathcal R$ which is disjoint from $\mathcal S$ (after of course removing $\Delta_X$) and all whose non trivial classes have cardinal $\geq k+1$. 
	
	Now let $\mathcal S_n\to \mathcal S$,
	since 
	\[
	M(\varphi_1\sqcup\cdots \sqcup \varphi_k \cap \mathcal S_n)=\int_A \abs{\{i\in\{1,\dots,k\}\colon (x,\varphi_i(x))\in\mathcal S_n\}}d\mu(x)\to 0,
	\]
	 for $n$ large enough there will be a positive measure set of $x\in A$ such that for all $i\in\{1,\dots,k\}$, $(x,\varphi_i(x))\notin\mathcal S_n$, in particular $\mathcal S_n$ has somewhere index $>k$.
	 So $\Sub_{[\leq k]}(\mathcal R)$ is closed and hence $\Sub_{[<\infty]}(\mathcal R)=\bigcup_{k\in\N}\Sub_{[\leq k]}(\mathcal R)$ is $F_\sigma$. Moreover, $\Sub_{[<\infty]}(\mathcal R)=\bigcup_{k\in\N}\Sub_{[\leq k]}(\mathcal R)$ is $F_\sigma$-complete by virtue of the preceding proposition.
	
	Now if $\mathcal R$ has infinitely many ergodic components, let $(X_n)$ be a partition of $X$ into $\mathcal R$-invariant sets of positive measure.
	Denote by $\Subfin(\N)\subseteq \{0,1\}^\N$ the set of finite subsets of $\N$, viewed as finitely supported functions $\N\to\{0,1\}$.
	
	For every $n\in\N$, let $\Phi_n: \{0,1\}^\N\to \Sub(\mathcal R_{\restriction X_n})$ be the continuous reduction from 
	Proposition \ref{prop: continuous reduction}, and for a sequence $(x_n)_{n\in\N}$ of elements of $\{0,1\}^\N$,
	let $$\Phi((x_n)_{n\in\N})=\bigsqcup_{n\in\N} \Phi_n(x_n)\in\Sub(\mathcal R).$$ 
	By Lemma \ref{lem: continuous union on partition} $\Phi$ is continuous, and by construction 
	$$\Phi\inv (\Sub_{[<\infty]}(\mathcal R))=\prod_{n\in\N} \Subfin(\N).$$
	Being a (countable) infinite product of $F_\sigma$-hard sets, the latter is $F_{\sigma\delta}$-hard, so the set of finite index subequivalence relations is $F_{\sigma\delta}$-hard.
\end{proof}

\begin{remark}
	It follows from \cite[Prop.\ 9.4]{kechrisSpaceMeasurepreservingEquivalence} that $\Sub_{[<\infty]}(\mathcal R)$ is always $F_{\sigma\delta}$ (the proof in the nonsingular case works the same), so when $\mathcal R$ is aperiodic and has infinitely many ergodic components, $\Sub_{[<\infty]}(\mathcal R)$ is actually $F_{\sigma\delta}$-complete.
\end{remark}

We finally use a similar approach to show that  the set of finite subequivalence relations is always $F_{\sigma\delta}$-complete (for $\mathcal R$ aperiodic), by first directly showing that the space of finite subequivalence relations if $F_\sigma$-hard. In the remainder of the paper, we set $$\Gamma_0=\bigoplus_\N\Z/2\Z=\{f:\N\to\Z/2\Z\,\vert\, f(n)=0\text{ for all but finitely many }n\in\N\}.$$

\begin{lemma}
	 The space $\Sub_{\mathrm{fin}}(\Gamma_0)$ of finite subgroups of $\Gamma_0$ is an $F_\sigma$-complete subset of the space $\Sub(\Gamma_0)$ of subgroups of $\Gamma_0$. 
\end{lemma}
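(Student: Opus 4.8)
The plan is to establish the two halves of the statement in turn: that $\Sub_{\mathrm{fin}}(\Gamma_0)$ is $F_\sigma$ in $\Sub(\Gamma_0)$, and that it is $F_\sigma$-hard.

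For the first half, recall that $\Sub(\Gamma_0)$ sits as a closed (hence compact and zero-dimensional) subspace of the Cantor space $\{0,1\}^{\Gamma_0}$: being a subgroup of $\Gamma_0$ is a closed condition, as it requires $1\in H$ together with $gh\inv\in H$ for all $g,h\in H$. For each $n\in\N$, the set $\{H\in\Sub(\Gamma_0)\colon\abs{H}\leq n\}$ is closed, since its complement is the union, over all $(n+1)$-tuples of pairwise distinct elements $g_0,\dots,g_n$ of $\Gamma_0$, of the basic clopen sets $\{H\colon g_0,\dots,g_n\in H\}$. Hence $\Sub_{\mathrm{fin}}(\Gamma_0)=\bigcup_{n\in\N}\{H\in\Sub(\Gamma_0)\colon\abs{H}\leq n\}$ is $F_\sigma$.

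For $F_\sigma$-hardness I would reduce from the set $\mathcal P_f(\N)=\{x\in\{0,1\}^\N\colon\{n\colon x(n)=1\}\text{ is finite}\}$ of finite subsets of $\N$, which is well known to be $F_\sigma$-complete in $\{0,1\}^\N$ (equivalently, the set of infinite subsets of $\N$ is $G_\delta$-complete, a fact already used elsewhere in this section). Writing $\Gamma_0=\bigoplus_{n\in\N}\langle e_n\rangle$ with each $e_n$ of order $2$, and identifying $x\in\{0,1\}^\N$ with the set $S_x=\{n\colon x(n)=1\}$, I set
\[
f(x)=\langle e_n\colon n\in S_x\rangle=\{g\in\Gamma_0\colon\supp(g)\subseteq S_x\},
\]
which is a subgroup of $\Gamma_0$ because $\Gamma_0$ is abelian of exponent $2$. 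The resulting map $f\colon\{0,1\}^\N\to\Sub(\Gamma_0)$ is continuous: for a fixed $g\in\Gamma_0$, the condition $g\in f(x)$ amounts to $x(n)=1$ for all $n\in\supp(g)$, which is clopen and depends on only finitely many coordinates of $x$. Since $f(x)$ is finite if and only if $S_x$ is finite, we obtain $f^{-1}(\Sub_{\mathrm{fin}}(\Gamma_0))=\mathcal P_f(\N)$, so $f$ witnesses that $\Sub_{\mathrm{fin}}(\Gamma_0)$ is $F_\sigma$-hard. Combined with the first half, $\Sub_{\mathrm{fin}}(\Gamma_0)$ is $F_\sigma$-complete.

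The whole argument is routine and I do not anticipate any genuine obstacle; the only point deserving a little attention is to carry out both the closedness of the sets $\{\abs{H}\leq n\}$ and the continuity of $f$ at the level of the basic clopen sets of $\{0,1\}^{\Gamma_0}$, after which everything is immediate.
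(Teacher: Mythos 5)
Your proof is correct and follows essentially the same route as the paper: the hardness half uses exactly the reduction the paper uses, namely $x\mapsto\bigoplus_{n\in S_x}\Z/2\Z$ from the $F_\sigma$-complete set of finite subsets of $\N$. The only (immaterial) difference is in the easy half, where the paper simply observes that $\Sub_{\mathrm{fin}}(\Gamma_0)$ is countable, hence $F_\sigma$, while you instead decompose it into the closed sets $\{H\colon\abs{H}\leq n\}$.
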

\begin{proof}
	The set of finite subgroups is countable, in particular it is $F_\sigma$. It is $F_\sigma$-complete via the continuous reduction $B\subseteq\N\mapsto \Lambda_B:=\bigoplus_{n\in B}\Z/2\Z\leq\Gamma$ of the $F_\sigma$-complete set of finite subsets of $\N$ to $\Sub_{\mathrm{fin}}(\Gamma_0)$. 
\end{proof}

\begin{remark}
	The above lemma can also be  proven by first noting that by the Baire category theorem, the countable dense set $\Sub_{\mathrm{fin}}(\Gamma_0)$ cannot be $G_\delta$ in the perfect zero-dimensional Polish space of subgroups of $\Gamma_0$, and then applying Wadge's theorem (see \cite[Thm.\ 22.10]{kechrisClassicalDescriptiveSet1995}).
\end{remark}

The following proposition was stated and proven in the p.m.p.\ setup in the first version of this paper
(in this restricted setup, it is a consequence of \cite[Thm.\ 4]{dyeGroupsMeasurePreserving1959}). 
We are very grateful to the referee for pointing out that 
the same result is true in the purely Borel setup and for indicating the proof.

\begin{proposition}\label{prop: free Gammanot action}
	For every aperiodic CBER $\mathcal R$, there is a free $\Gamma_0$-action
	which induces a subequivalence relation of $\mathcal R$. 
\end{proposition}
We rely on the following key lemma.
\begin{lemma}
	Let $\mathcal R$ be an aperiodic CBER, then its Borel full group contains 
	a fixed-point free involution. 
\end{lemma}
\begin{proof}
	By \cite[Lem.~7.4]{kechrisTopicsOrbitEquivalence2004}, 
	$\mathcal R$ contains a Borel subequivalence relation $\mathcal S$, all
	whose classes have cardinality $2$. 
	Our involution $U$ is then defined by: for all $x\in X$, $U(x)$ is the 
	only $y\neq x$ such that $(x,y)\in\mathcal S$.
\end{proof}

\begin{proof}[Proof of Proposition \ref{prop: free Gammanot action}]
	We first inductively define a decreasing sequence $(A_n)_{n\in\N}$
	of Borel subsets of $X$ and fixed point-free partial involutions $(\varphi_n)_{n\in\N}$
	in the Borel pseudo full group of $\mathcal R$ with support $A_n$ as follows.
	We begin with an everywhere defined involution $\varphi_0$
	with support $A_0=X$ as provided by the previous lemma and then, assuming $\varphi_n$ has been built, 
	we let $A_{n+1}$ be a Borel fundamental domain for $\varphi_n$. Noting 
	that the restriction of $\mathcal R$ to $A_{n+1}$ has to remain aperiodic,
	we use the above lemma to find $\varphi_{n+1}$ as a fixed point-free involution
	in the Borel full group of the restriction of $\mathcal R$ to $A_{n+1}$.

	We will extend these to involutions so as to have a free $\Gamma_0$-action. Letting $G_n=\{ f\in \bigoplus_{\N}\Z/2\Z\colon \forall m> n, f(m)=0\}$, we can write $\Gamma_0$ as the increasing union $\Gamma_0=\bigcup_n G_n$, and we have 
	$G_n=\la s_0,\dots,s_{n}\ra$, where $s_i(k)=0$ if $i\neq k$, and $s_i(k)=1$ if $i=k$.
	Our inductive construction of the action will be so that for all $n$, 
	\begin{equation}\label{eq: prescribed fd}
		X=\bigsqcup_{g\in G_n} g\cdot A_{n+1}
	\end{equation}
	
	We begin by letting $s_0\cdot x=\varphi_0(x)$ for all $x\in X$.
	And then,  assuming that the action of $G_n$ has been defined so that $X=\bigsqcup_{g\in G_n} g\cdot A_{n+1}$, 
	we start defining the $s_{n+1}$ action on $A_{n+1}$ by letting 
	$s_{n+1}\cdot x=\varphi_{n+1}(x)$ if $x\in A_{n+1}$. 
	Since we want $s_{n+1}$ to commute with $G_n$ and we have $X=\bigsqcup_{g\in G_n} g\cdot A_{n+1}$, 
	we are then forced to extend $s_{n+1}$ to the whole of $x$ by letting, for every $x\in A_{n+1}$ 
	and every $g\in G_n$, 
	\[ 
	s_{n+1}(g\cdot x)=g\cdot s_{n+1}\cdot x. 
	\]
	It is then not hard to check that this does define a $G_{n+1}$ action with 
	$X=\bigsqcup_{g\in G_{n+1}} g\cdot A_{n+2}$, which provides us a $\Gamma_0$-action by induction in the end. 
	This action is free since Equation \eqref{eq: prescribed fd} implies that the action restricted to each $G_n$ is free, 
	which finishes the proof.
\end{proof}

\begin{remark}
	It follows from \cite[Thm.\ 3]{dyeGroupsMeasurePreserving1959} that when $\mathcal R$ is p.m.p.\ hyperfinite 
	and aperiodic, one can upgrade the above result and obtain that $\mathcal R$ 
	is equal to the equivalence relation generated by the 
	$\Gamma_0$-action. 
	This led Dye to formulate the question whether any 
	p.m.p.\ equivalence relation should come the free action of some countable group 
	(see the paragraph right before Lemma 6.5 in \cite{dyeGroupsMeasurePreserving1959}). 
	This question was answered in the negative 40 years later by Furman \cite{furmanOrbitEquivalenceRigidity1999}.
\end{remark}
Given a non-singular equivalence relation $\mathcal R$, we denote by $\Subfin(\mathcal R)$ the space of finite subequivalence relations of $\mathcal R$.

\begin{proposition}
	Let $\mathcal R$ be an aperiodic non-singular equivalence relation. 
	Then $\Subfin(\mathcal R)$ is $F_{\sigma\delta}$-complete. 
\end{proposition}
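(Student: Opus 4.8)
The plan is to prove two things: that $\Subfin(\mathcal R)$ is $F_{\sigma\delta}$ (the upper bound) and that it is $F_{\sigma\delta}$-hard (the lower bound). For the upper bound, note that a subequivalence relation $\mathcal S$ is finite iff for every $n\in\N$ the set $X^{\geq n}_{\mathcal S}:=\{x\in X\colon \abs{[x]_{\mathcal S}}\geq n\}$ has measure tending to $0$ as $n\to\infty$; equivalently $\mathcal S\in\Subfin(\mathcal R)$ iff for every $k\in\N$ there is $n\in\N$ with $\mu(X^{\geq n}_{\mathcal S})<1/k$. One then checks, exactly as in the proof of the $\Sub_{[\leq k]}(\mathcal R)$-closedness above, that each map $\mathcal S\mapsto\mu(X^{\geq n}_{\mathcal S})$ is Borel of low complexity — in fact $\{\mathcal S\colon \mu(X^{\geq n}_{\mathcal S})\leq c\}$ is closed: if $\mu(X^{\geq n}_{\mathcal S})>c$ we can find a positive-measure $A$ and $\varphi_1,\dots,\varphi_{n-1}\in[[\mathcal R]]$ with common domain $A$, with $A,\rng\varphi_1,\dots,\rng\varphi_{n-1}$ disjoint and $\varphi_i\varphi_j\inv\subseteq\mathcal S$ off the diagonal, and this condition, being about $M(\varphi_i\varphi_j\inv\cap\mathcal S)$ being large, persists under strong-topology perturbation of $\mathcal S$ (using lower semicontinuity of $M$ for the lower topology and continuity of intersection, as in Lemma \ref{lem: intersection is lower continous}). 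Hence $\{\mathcal S\colon \mu(X^{\geq n}_{\mathcal S})<1/k\}$ is open and $\Subfin(\mathcal R)=\bigcap_k\bigcup_n\{\mathcal S\colon \mu(X^{\geq n}_{\mathcal S})<1/k\}$ is $F_{\sigma\delta}$ (it is in fact $G_{\delta\sigma\delta}$ written this way, but one can also invoke \cite[Prop.\ 9.4]{kechrisSpaceMeasurepreservingEquivalence} as the earlier remark does; the key point is the $F_{\sigma\delta}$ upper bound, which is standard).

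For the hardness, the plan is to transfer the complexity of the product $\prod_{n\in\N}\mathcal P_f(\N)$ — which is $F_{\sigma\delta}$-complete, being a countable product of $F_\sigma$-complete (hence $F_\sigma$-hard and not $G_\delta$) sets — into $\Subfin(\mathcal R)$, using the free $\Gamma_0$-action constructed in the preceding lemma. Concretely: take the free $\Gamma_0$-action whose orbit equivalence relation $\mathcal E$ is a subequivalence relation of $\mathcal R$; by the lemma above we have the descending sets $B_0\supseteq B_1\supseteq\cdots$ with $\mu_{\mathcal R}(B_n)=2^{-n-1}$, and Equation \eqref{eq: prescribed fd} gives $X=\bigsqcup_{g\in G_n}g\cdot B_n$. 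For a sequence $\bar x=(x_n)\in\prod_n\{0,1\}^\N$, or more usefully for a sequence $(C_n)$ of subsets of $\N$, I want to build a finite subequivalence relation iff all $C_n$ are finite. The cleanest way: use the Lemma \ref{lem: Sub_fin of Gamma_0} reduction $B\mapsto\Lambda_B=\bigoplus_{n\in B}\Z/2\Z$ of finite-subsets-of-$\N$ into finite subgroups of $\Gamma_0$; each subgroup $\Lambda_B\leq\Gamma_0$ gives, via the free action, a subequivalence relation $\mathcal E_{\Lambda_B}$ of $\mathcal E\subseteq\mathcal R$ which is finite iff $\Lambda_B$ is finite iff $B$ is finite. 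To get the full $F_{\sigma\delta}$ rather than just $F_\sigma$, split $X$ into countably many $\mathcal R$-invariant pieces — wait, $\mathcal R$ is merely ergodic, so there need not be invariant pieces. Instead split using a partition $(X_n)$ of $X$ into pieces of $\mathcal R$-conditional measure $2^{-n-1}$ meeting a.e.\ $\mathcal R$-class (Maharam), and on each $X_n$ run a copy of the $\Gamma_0$-action restricted to $X_n$ (the construction of the Dye lemma applies verbatim inside $\mathcal R_{\restriction X_n}$ since it is aperiodic p.m.p.). Then send $(C_n)_{n\in\N}\mapsto\bigsqcup_n\mathcal E_{\Lambda_{C_n}}^{(n)}$ where $\mathcal E^{(n)}$ is the copy living in $X_n$; by Lemma \ref{lem: continuous union on partition} this is continuous into $\Sub(\mathcal R)$, and the image lands in $\Subfin(\mathcal R)$ iff every $C_n$ is finite, i.e.\ iff $(C_n)\in\prod_n\mathcal P_f(\N)$. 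This exhibits $\prod_n\mathcal P_f(\N)$ as a continuous preimage of $\Subfin(\mathcal R)$, proving $F_{\sigma\delta}$-hardness. Combined with the $F_{\sigma\delta}$ upper bound, $\Subfin(\mathcal R)$ is $F_{\sigma\delta}$-complete.

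The main obstacle, I expect, is the bookkeeping in the reduction: one must make sure the subequivalence relations $\mathcal E_{\Lambda_{C_n}}^{(n)}$ on the different $X_n$ genuinely stack up to a single well-defined subequivalence relation of $\mathcal R$ (they do, as they are supported on disjoint $X_n$'s, each being an equivalence relation on $X_n$ union the diagonal elsewhere), and that finiteness of the total relation is exactly controlled by finiteness of every $C_n$ — finiteness of a disjoint union $\bigsqcup_n\mathcal E_n$ of equivalence relations on disjoint pieces is equivalent to finiteness of each $\mathcal E_n$, so there is no subtlety about uniformity here, which is precisely why we get $F_{\sigma\delta}$ and not $F_\sigma$. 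One should also double-check continuity: since $C_n$ and $C_n'$ agreeing on $\{0,\dots,m\}$ forces $\Lambda_{C_n}$ and $\Lambda_{C_n'}$ to agree "below $m$", the corresponding subequivalence relations agree on a large subset, and Lemma \ref{lem: continuous union on partition} then delivers strong-topology continuity of the whole assignment. Everything else is routine.
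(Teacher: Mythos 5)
Your hardness reduction is essentially the paper's: partition $X$ into positive-measure pieces, run the Dye-type lemma to plant a free $\Gamma_0$-action in each restriction, send a sequence of subgroups (equivalently, of subsets of $\N$) to the disjoint union of the corresponding orbit relations, and invoke the continuity of unions over a fixed partition to reduce the $F_{\sigma\delta}$-complete product of $F_\sigma$-complete sets to $\Subfin(\mathcal R)$; the paper likewise handles the upper bound by citation (to Kechris, Thm.\ 8.5). One correction to your optional direct argument for the upper bound: the map $\mathcal S\mapsto\mu(X^{\geq n}_{\mathcal S})$ is lower semicontinuous, so $\{\mathcal S\colon \mu(X^{\geq n}_{\mathcal S})\leq c\}$ is closed but $\{\mathcal S\colon \mu(X^{\geq n}_{\mathcal S})<1/k\}$ is only $F_\sigma$, not open --- were it open, $\Subfin(\mathcal R)$ would be $G_\delta$, contradicting the very hardness you prove; writing $\Subfin(\mathcal R)=\bigcap_k\bigcup_n\{\mathcal S\colon\mu(X^{\geq n}_{\mathcal S})\leq 1/k\}$ with closed pieces gives the intended $F_{\sigma\delta}$ bound.
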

\begin{proof}
	Partition $X$ into $(X_n)_{n\in\N}$ so that each $X_n$ has positive measure and the restriction of $\mathcal R$ to each 
	$X_n$ is aperiodic. 
	By the previous lemma each restriction of $\mathcal R$ to $X_n$ contains a subequivalence relation 
	coming from a free $\Gamma_0$-action $\alpha_n$. 
	Now consider the map $(\mathrm{Sub}(\Gamma_0))^\N\to \Sub(\mathcal R)$ which takes any sequence $(\lambda_n)_{n\in\N}$ 
	to the equivalence relation $\bigsqcup_{n\in\N}\mathcal R_{\alpha_n(\Lambda_n)}$. 
	This is a topological embedding by Lemma \ref{lem: continuous union on partition}, 
	and it reduces the $F_{\sigma\delta}$-complete set $\Sub_{\mathrm{fin}}(\Gamma_0) ^\N\subseteq \Sub(\Gamma_0)^\N$ to the set of finite equivalence relations, which is thus $F_{\sigma\delta}$-hard.

	We are left with proving that $\Subfin(\mathcal R)$ is $F_{\sigma\delta}$, which is due to 
	Kechris in the p.m.p.\ setup \cite[Thm.\ 8.5]{kechrisSpaceMeasurepreservingEquivalence}. 
	While his proof generalizes to the non-singular setup,
	we provide here another approach relying on spaces of measurable maps.
	Let us fix a uniquely generating sequence of moving partial involutions $(\varphi_n)_{n\in\N}$, 
	consider for every $n\in\N$ the continuous map 
	\[
	\iota_n: \Sub(\mathcal R)\to \MAlg(X,\mu)
	\]
	which associates to every $\mathcal S\in\Sub(\mathcal R)$ 
	the set of $x\in \dom\varphi_n$ such that $(x,\varphi_n(x))\in \mathcal S$.

	Identifying subsets to characteristic functions yields a natural homeomorphism $\Phi$
	between $\MAlg(X,\mu)$ and the Polish space of measurable maps 
	$\LL^0(X,\mu,\{0,1\})$ (see \cite[Appendix B]{lemaitreL1FullGroups2025}
	for more on $\LL^0$ spaces).
	Using this homeomorphism and putting all the maps $\iota_n$ together, we get an continuous embedding
	\[
	\iota=(\Phi\circ \iota_n)_{n\in\N}: \Sub(\mathcal R)\into \prod_{n\in\N}\LL^0(X,\mu,\{0,1\})\simeq \LL^0(X,\mu,\{0,1\}^\N). 
	\]
	We will conclude the proof via a straightforward application of the claim below.
	\begin{claim}
		Let $Y$ be a Polish space, let $D\subseteq Y$ be $F_\sigma$. 
		Then $\LL^0(X,\mu,D)$ is an $F_{\sigma\delta}$ subset of $\LL^0(X,\mu,Y)$. 
	\end{claim}
	\begin{cproof}
	It follows directly from the definition of the topology of convergence in measure given in 
	\cite[Appendix~B]{lemaitreL1FullGroups2025} that for any open subset 
	$U\subseteq Y$,
	the map $f\in\LL^0(X,\mu,Y)\mapsto \mu(f\inv(U))$ is lower semi-continuous.
	Taking complements, given a closed subset $C\subseteq\{0,1\}^\N$, the map 
	$f\in\LL^0(X,\mu,Y)\mapsto \mu(f\inv(C))$ is thus upper semi-continuous. 
	Now write $D=\bigcup_n C_n$ as an increasing union 
	of closed subsets $C_n$, and observe that $\mu(f\inv (D)))=1$ iff for all $\epsilon>0$ 
	there is $n$ such that $\mu(f\inv(C_n))\geq 1-\epsilon$. 
	\end{cproof}
	
	In order to finish the proof, take $D=\Subfin(\N)$ to be the $F_\sigma$ set of finite subsets of $\N$,
	viewed as a subset of $Y=\{0,1\}^\N$.
	Then $\LL^0(X,\mu,\Subfin(\N))$ is $F_{\sigma\delta}$ in $\LL^0(X,\mu,\{0,1\}^\N)$ by the above claim. 
	Finally, observe that
	$\Subfin(\mathcal R)=\iota\inv(\LL^0(X,\mu,\Subfin(\N)))$ which is thus $F_{\sigma\delta}$ 
	by continuity of $\iota$.
\end{proof}

\begin{remark}
	Endowing $\Sub(\mathcal R)$  with its strong topology and the uniform metric, 
	we get a Polish topometric space in the sense of Ben Yaacov \cite{benyaacovTopometricSpacesPerturbations2008}, 
	and our remark preceding Corollary  \ref{cor: dense orbit in subhyp} can be upgraded to the fact that if 
	a p.m.p.~equivalence relation
	$\mathcal R$ has property (T), then $\mathcal R$ is a metrically isolated point of $\Sub(\mathcal R)$ (see also \cite{gaboriauApproximationsStandardEquivalence2016} for more examples). I don't know whether the action of the full group or automorphism group of the hyperfinite ergodic p.m.p.\ equivalence relation admits a metrically generic orbit, as characterized in \cite{benyaacovTopometricEffrosTheorem2023}.
\end{remark}

\bibliographystyle{alphaurl}
\bibliography{zoterobib}

\end{document}